\documentclass[psamsfonts]{amsart}
\pdfoutput=1

\usepackage{amssymb,amsfonts}
\usepackage{amsmath}
\usepackage[all,arc]{xy}
\usepackage{enumerate}
\usepackage{mathrsfs}
\usepackage{graphicx}
\usepackage{tikz}
\usepackage{tikz-cd}
\usepackage{eufrak}
\usepackage[utf8]{inputenc}
\usepackage{mathtools}
\usepackage{stmaryrd}
\usepackage{enumitem}
\usepackage[T1]{fontenc} 
\usepackage{marginnote}

\usepackage{hyperref}
\hypersetup{hidelinks}

\newtheorem{thm}{Theorem}[section]
\newtheorem{cor}[thm]{Corollary}
\newtheorem{prop}[thm]{Proposition}
\newtheorem{lem}[thm]{Lemma}

\theoremstyle{definition}
\newtheorem{defn}[thm]{Definition}

\newtheorem{exmp}[thm]{Example}

\theoremstyle{remark}

\makeatletter
\let\c@equation\c@thm
\makeatother
\numberwithin{equation}{section}

\title[Quantum shuffle algebras and homology of type-\textit{B} Artin groups]{Fox-Neuwirth cells, quantum shuffle algebras,\\ and the homology of type-\textit{B} Artin groups}

\author{Anh Trong Nam Hoang}
\address{School of Mathematics, University of Minnesota, Minneapolis, MN 55455, USA}
\email{hoang278@umn.edu}

\date{\today}   

\begin{document}

\begin{abstract}
In this paper, we will develop a family of braid representations of Artin groups of type \textit{B} from braided vector spaces, and identify the homology of these groups with these coefficients with the cohomology of a specific bimodule over a quantum shuffle algebra. As an application, we give a complete characterization of the homology of type-\textit{B} Artin groups with coefficients in one-dimensional braid representations over a field of characteristic $0$. We will also discuss two different approaches to this computation: the first method extends a computation of the homology of braid groups due to Ellenberg--Tran--Westerland by means of induced representation, while the second method involves constructing a cellular stratification for configuration spaces of the punctured complex plane.
\end{abstract}

\maketitle




\tableofcontents


\section{Introduction}

Artin groups and Coxeter groups, particularly the classical Artin's braid groups, have been a rich object of study from a (co)homological standpoint. The cohomology of braid groups with trivial coefficients has been established in the 1970s by several contributors \cite{arn70,fuk70,coh1,coh2,vai78}, while the cohomology of pure braid groups was calculated in \cite{arn69}. For specific local coefficients, the (co)homology of braid groups in the (reduced) Burau representation has been determined by several authors \cite{dcps01,che17}, while Callegaro computed the homology of braid groups with coefficients in $R[q,q^{-1}]$, the ring of Laurent polynomials over a ring $R$, when $R = \mathbb{Z}$ or $R =k$ is a generic field \cite{cal06}. The case with coefficients in braid representations constructed from braided vector spaces was studied in \cite{etw17}.

For other Artin and Coxeter groups, few computations of their (co)homologies are known. The integral cohomology of Artin groups of types \textit{B} and \textit{D} was computed by Gorjunov \cite{gor78}, and those of Artin groups associated with exceptional Coxeter groups were determined by Salvetti \cite{sal94}. In \cite{dcpss}, De Concini, Procesi, Salvetti, and Stumbo calculated the cohomology of all finite-type Artin groups with coefficients in $\mathbb{Q}[q,q^{-1}]$, while the homology of type-\textit{B} Artin groups with coefficients in $\mathbb{Q}[q^{\pm 1}, t^{\pm 1}]$ was studied in \cite{cms08a,cms08b}. A general approach to the computation of the cohomology of Coxeter groups was detailed in \cite{sal02}. More recently, Boyd determined the second and third integral homology of an arbitrary finitely generated Coxeter group \cite{boy20}, while the second mod $2$ homology of an arbitrary Artin group was computed by Akita and Liu \cite{al18}. For a review of the (co)homologies of braid groups and groups connected with them, as well as other results on generalized braid groups, see \cite{frenkel,ver98,ver06,cm14,mar17}.

Recall that a \textit{braided vector space} over a field $k$ is a finite dimensional vector space $V$ equipped with an automorphism $\sigma: V \otimes V \to V \otimes V$ that satisfies the braid equation $(\sigma \otimes \mathrm{id}) \circ (\mathrm{id} \otimes \sigma) \circ (\sigma \otimes \mathrm{id}) = (\mathrm{id} \otimes \sigma) \circ (\sigma \otimes \mathrm{id}) \circ (\mathrm{id} \otimes \sigma)$ on $V^{\otimes 3}$. There is a natural action of the braid group $A_n$ \footnote{In this paper, we reserve the use of the notation $B_n$ for the $n^\mathrm{th}$ Artin group of type $B$. The $n^\mathrm{th}$ braid group will be denoted by $A_n$, due to its relation to the Artin groups of type $A$.} on $V^{\otimes n}$. Furthermore, we may define a braided, graded Hopf algebra named the \textit{quantum shuffle algebra} $\mathfrak{A}(V)$ whose underlying coalgebra is the cofree coalgebra on $V$ and multiplication is given by a shuffle product involving the braiding $\sigma$ (see Section~\ref{ssec:bvs_qsa}). Shuffle algebras have previously been used to express the cellular homology of configuration spaces and consequently the homology of braid groups with certain twisted coefficients \cite{fuk70,vai78,mar96,cal06,ks20}. However, very little is known about the quantum shuffle algebra and its cohomology. Some notable characterizations of this algebra can be found in \cite{dkkt97,ros98,leb13}, while an effort to bound its cohomology was detailed in \cite{etw17}.

The approach of this paper is inspired by the work of Ellenberg, Tran, and Westerland on the homology of braid groups with coefficients in monoidal braid representations \cite{etw17}. In particular, they identified the homology of the braid group $A_n$ with coefficients in $V^{\otimes n}$ with the cohomology of the quantum shuffle algebra $\mathfrak{A} = \mathfrak{A}(V^*_\epsilon)$; here $V^*_\epsilon$ is the dual vector space $V^*$ with braiding dual to that of $V$ and twisted by a sign.

\begin{thm}[\cite{etw17}]\label{thm:etw}
There is an isomorphism
\[\displaystyle H_q(A_n; V^{\otimes n}) \cong \mathrm{Ext}^{n-q,n}_{\mathfrak{A}} (k,k)\]
where the first index in the bigrading on $\mathrm{Ext}$ is the homological degree, and the second the internal degree. Furthermore, the natural multiplication on the braid homology is carried to the Yoneda product on $\mathrm{Ext}$; that is,
\[\displaystyle \bigoplus_{n=0}^{\infty} H_*(A_n; V^{\otimes n}) \cong \bigoplus_n \mathrm{Ext}^{n-*,n}_{\mathfrak{A}} (k,k)\]
is an isomorphism of bigraded rings.
\end{thm}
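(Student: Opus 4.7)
The plan is to reconstruct the Ellenberg--Tran--Westerland identification via a Fox--Neuwirth cell decomposition of configuration space, and to match the resulting cellular cochain complex with a complex computing $\mathrm{Ext}_{\mathfrak{A}}^{*,*}(k,k)$. Recall that the unordered configuration space $\mathrm{UConf}_n(\mathbb{C})$ is a $K(A_n,1)$, so $H_q(A_n;V^{\otimes n})$ is computed by the cellular chain complex of $\mathrm{UConf}_n(\mathbb{C})$ with coefficients in the local system determined by the braid action on $V^{\otimes n}$.

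First I would stratify $\mathrm{UConf}_n(\mathbb{C})$ by the classical Fox--Neuwirth cells: a configuration is assigned the composition $(n_1,\ldots,n_k)$ of $n$ recording how many points share each of the distinct real parts $x_1 < \cdots < x_k$, together with the vertical orderings within each column. An open cell of this type has dimension $n + k$ (one real parameter per point, plus $k$ horizontal parameters), so its codimension is $n - k$. The key structural observation is that the cofree coalgebra underlying $\mathfrak{A}(V^*_\epsilon)$ has $n$-th graded piece naturally indexed by the same compositions of $n$, and that the local system $V^{\otimes n}$ restricted to a Fox--Neuwirth cell indexed by $(n_1,\ldots,n_k)$ is canonically (up to orientation signs) $V^{\otimes n_1}\otimes\cdots\otimes V^{\otimes n_k}$, because inside a single column the points are vertically ordered and carry no braiding monodromy.

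Next I would compute the cellular boundary. The codimension-one faces of the cell labeled $(n_1,\ldots,n_k)$ arise from colliding two adjacent columns, producing a cell labeled $(n_1,\ldots,n_i+n_{i+1},\ldots,n_k)$; the attaching map permutes the points of the two merging columns so that those of the left column end up on top, and this permutation acts on $V^{\otimes n_i}\otimes V^{\otimes n_{i+1}}$ by precisely the braided shuffle (the sum over $(n_i,n_{i+1})$-shuffles, weighted by the appropriate product of $\sigma$'s) that defines multiplication in the quantum shuffle algebra. Taking duals, orientation signs, and the Koszul sign rule gives the sign twist which replaces $V^*$ by $V^*_\epsilon$. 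The resulting cellular cochain complex of $\mathrm{UConf}_n(\mathbb{C})$ with coefficients in $(V^{\otimes n})^*$ is then visibly isomorphic to the $n$-th internal-degree piece of the reduced cobar complex of $\mathfrak{A} = \mathfrak{A}(V^*_\epsilon)$, and a cell of codimension $n-k$ corresponds to a cobar length-$k$ term. This yields the isomorphism
\[
H_q(A_n; V^{\otimes n}) \;\cong\; \mathrm{Ext}^{n-q,\,n}_{\mathfrak{A}}(k,k).
\]

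Finally, to promote this to an isomorphism of bigraded rings, I would identify both products geometrically. On the left, the Pontryagin-type product is induced by the maps $\mathrm{UConf}_m\times \mathrm{UConf}_n \to \mathrm{UConf}_{m+n}$ given by placing the first configuration to the left of the second; on the level of Fox--Neuwirth cells this is concatenation of compositions, which on the cobar side is precisely the Yoneda (equivalently, shuffle-concatenation) product. A compatibility check of orientations and the shuffle signs then completes the proof. The main obstacle I anticipate is bookkeeping: verifying that the orientation signs of Fox--Neuwirth attaching maps, combined with the Koszul signs from dualizing $V^{\otimes n}$ and from the cobar differential, assemble into exactly the $\epsilon$-twist that defines $V^*_\epsilon$, so that the shuffle multiplication with the twisted braiding matches the cellular differential on the nose.
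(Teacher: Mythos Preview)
Your sketch is correct and follows the same route as the Ellenberg--Tran--Westerland argument; note that this paper does not reprove the theorem but only cites it, with the relevant ingredients summarized in Section~\ref{ssec:fnf}. One point worth tightening: the Fox--Neuwirth strata assemble into a CW structure on the one-point compactification $\mathrm{Conf}_n(\mathbb{C})\cup\{\infty\}$, not on $\mathrm{Conf}_n(\mathbb{C})$ itself, so the passage from the cellular complex $C(n)_*\otimes T$ to $H_*(A_n;V^{\otimes n})$ runs through Poincar\'e duality on the open $2n$-manifold and the universal coefficient theorem (as in the display after Theorem~\ref{thm:etw_fnf}); this is where the dualization $V\leadsto V^*$ and the degree shift $q\leftrightarrow 2n-q$ enter, separately from the cellular-orientation signs that produce the $\epsilon$-twist.
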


An application of this theorem with a suitable choice of $V$ proves an upper bound for the number of finite extensions of $\mathbb{F}_q(t)$, the field of rational functions over a finite field, with a given Galois group and conditions on the local monodromy at the ramified places of $\mathbb{F}_q(t)$ for sufficiently large $q$. As a consequence, the upper bound in Malle's conjecture \cite{mal02,mal04} over $\mathbb{F}_q(t)$ holds for all choices of the Galois group $G$ and all sufficiently large $q$ \cite{etw17}. This result is partially responsible for the motivation of this work.

In this paper, we will prove an analogue of Theorem~\ref{thm:etw} for the Artin groups of type \textit{B}, the subgroups of braid groups consisting of braids whose last strand is always pure. As an application, we will give a complete characterization of the homology of these Artin groups with coefficients in one-dimensional braid representations over a field of characteristic $0$.

\textit{Outline of the argument.} In Section~\ref{ssec:lbvs}, we will introduce the concept of a \textit{left-braided vector space} over $k$ to be a pair of finite dimensional vector spaces $(V,W)$ where $(V,\sigma)$ is a braided vector space, with an additional braiding $\tau: V \otimes W \to V \otimes W$ that along with $\sigma$ satisfies an additional braid equation $(\sigma \otimes \mathrm{id}) \circ (\mathrm{id} \otimes \tau) \circ (\sigma \otimes \mathrm{id}) \circ (\mathrm{id} \otimes \tau)  = (\mathrm{id} \otimes \tau) \circ (\sigma \otimes \mathrm{id}) \circ (\mathrm{id} \otimes \tau) \circ (\sigma \otimes \mathrm{id})$ on $V^{\otimes 2} \otimes W$. The main purpose of introducing this object is to provide a representation of the type-\textit{B} Artin group $B_n$ on $V^{\otimes n} \otimes W$.

A left-braided vector space $(V,W,\sigma,\tau)$ is called \textit{separable} if, roughly speaking, there exists a notion of ``square root'' for the braiding $\tau$.
Given a separable left-braided vector space $(V,W)$, we may define a bimodule $\mathfrak{M}(V,W)$ over the quantum shuffle algebra $\mathfrak{A}(V)$ by
\[ \mathfrak{M}(V,W) = \displaystyle \bigoplus_{q \ge 1} \bigoplus_{0 \le j \le q-1} V^{\otimes j} \otimes W \otimes V^{\otimes q-j-1} \]
with multiplication resembling the quantum shuffle product (see Definition~\ref{defn:M} for the specific formulae). The goal of the paper is to express the homology of the Artin groups of type \textit{B} with coefficients in the given braid representations as the cohomology of the bimodule $\mathfrak{M}=\mathfrak{M}(V^*_\epsilon,W^*)$ defined over the quantum shuffle algebra $\mathfrak{A} = \mathfrak{A}(V^*_\epsilon)$ as described above.

\begin{thm}\label{thm:main}
Given a separable left-braided vector space $(V,W)$, there is an isomorphism 
\[H_q (B_n; V^{\otimes n} \otimes W) \cong \mathrm{Ext}^{n-q, n+1}_{\mathfrak{A}^e} (\mathfrak{M},k)\]
where $\mathfrak{A}^e = \mathfrak{A} \otimes \mathfrak{A}^{op}$ is the enveloping algebra of $\mathfrak{A}$.
\end{thm}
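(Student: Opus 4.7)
The plan is to adapt the Ellenberg--Tran--Westerland strategy, replacing the unordered configuration space of $\mathbb{C}$ by the unordered configuration space of the punctured plane. First I would identify the classifying space $BB_n$ with the aspherical space $\mathrm{UConf}_n(\mathbb{C}\setminus\{0\})$, under which the $B_n$-representation $V^{\otimes n}\otimes W$ corresponds to a local coefficient system: the first $n$ tensor factors record the labels of the moving points, while $W$ records the (always pure, hence well-defined) monodromy contribution of the puncture at $0$.

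Next I would construct a Fox--Neuwirth-type stratification of $\mathrm{UConf}_n(\mathbb{C}\setminus\{0\})$, where each open cell is specified by the increasing sequence of real parts of the $n$ configuration points together with the position of $0$ among them. A typical top cell is indexed by a tuple that arranges the $n$ points and the puncture in linear order, giving a cell index of the form $\xi_1\otimes\cdots\otimes\xi_j\otimes w\otimes\xi_{j+1}\otimes\cdots\otimes\xi_n$ with $\xi_i\in V^*_\epsilon$ and $w\in W^*$. Consequently, the cellular cochain complex with twisted coefficients in $V^{\otimes n}\otimes W$, in internal degree $n+1$, is exactly the direct sum over $0\le j\le n-1$ of $(V^*_\epsilon)^{\otimes j}\otimes W^*\otimes(V^*_\epsilon)^{\otimes (n-1-j)}$, which is the degree-$(n+1)$ component of the bimodule $\mathfrak{M}=\mathfrak{M}(V^*_\epsilon,W^*)$.

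The central computation is to show that the Fox--Neuwirth differential, obtained by collapsing adjacent cells, coincides with the bar differential of the two-sided bar resolution of $\mathfrak{M}$ over $\mathfrak{A}^e$. On both sides of the factor $W^*$ the differential is controlled by the braiding on $V^*_\epsilon$ exactly as in the proof of Theorem~\ref{thm:etw}, so that the left and right $\mathfrak{A}$-actions assemble into the $\mathfrak{A}^e$-action on $\mathfrak{M}$; cell collisions that cross the puncture are governed by $\tau$ and its square root, which is precisely the information recorded in the bimodule formulae defining $\mathfrak{M}$. Once these boundaries are matched, the cellular cochain complex dualizes to the bar complex computing $\mathrm{Ext}^{*,n+1}_{\mathfrak{A}^e}(\mathfrak{M},k)$, and tracking the indexing yields the stated identification $H_q(B_n; V^{\otimes n}\otimes W)\cong \mathrm{Ext}^{n-q,n+1}_{\mathfrak{A}^e}(\mathfrak{M},k)$.

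I expect the main obstacle to be the careful bookkeeping at the puncture. On the type-$A$ side, Fox--Neuwirth signs depend only on the orientation of the plane, whereas here every cell collision crossing $0$ contributes both a Koszul sign and a monodromy factor that is only a half-twist; making these compatible with the $\mathfrak{A}^e$-module structure of $\mathfrak{M}$ is precisely where the separability of $(V,W)$ is forced upon us, since the chain-level formulae must involve a genuine square root of $\tau$. After this bimodule compatibility is verified, the remainder of the argument parallels the type-$A$ case and reduces to the algebraic comparison of the two complexes.
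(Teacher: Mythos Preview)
Your outline is sound, but it takes a different route from the paper's main proof. The paper establishes Theorem~\ref{thm:main} (restated and proved as Corollary~\ref{cor:main}) by passing through Shapiro's Lemma: it identifies $H_*(B_n;V^{\otimes n}\otimes W)$ with $H_*(A_{n+1};\mathrm{Ind}^{A_{n+1}}_{B_n}(V^{\otimes n}\otimes W))$, then applies the classical Fox--Neuwirth stratification of $\mathrm{Conf}_{n+1}(\mathbb{C})$ (not of the punctured plane) with coefficients in the induced representation. The resulting complex $C(n+1)_*\otimes\mathrm{Ind}^{A_{n+1}}_{B_n}(V^{\otimes n}\otimes W)$ is matched (Proposition~\ref{prop:Fn=Cn}) with a purpose-built complex $F_{*,n+1}(\mathfrak{M},\mathfrak{I})$, whose homology is shown separately to compute $\mathrm{Tor}^{\mathfrak{A}^e}(\mathfrak{M},k)$. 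Separability enters precisely at this matching step, via the commutativity of Diagram~\ref{diag:separable}. Your approach---building a Fox--Neuwirth stratification directly on $\mathrm{Conf}_n(\mathbb{C}^\times)$---is in fact carried out in Section~\ref{sec:fnf} of the paper as an alternative, and is shown there (Proposition~\ref{prop:Dn=Cn}) to produce an isomorphic chain complex. The induced-representation route lets one reuse the type-$A$ machinery of \cite{etw17} wholesale, pushing all new content into the analysis of the $A_{n+1}$-action on the induced module; your direct route is more geometric but requires constructing the cell complex of the punctured plane from scratch, with exactly the bookkeeping at the puncture that you anticipate.

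Two small points to tighten. First, the top cells alone do not constitute the degree-$(n+1)$ part of $\mathfrak{M}$; rather, the full cellular chain complex (over all cell dimensions) is what matches the complex $F_{*,n+1}(\mathfrak{M},\mathfrak{I})$, whose general term is a tensor $a_1\otimes\cdots\otimes\mu_i\otimes\cdots\otimes a_q$ with $a_m\in\mathfrak{I}$ and $\mu_i\in\mathfrak{M}$---the single summand $\mathfrak{M}_{n+1}$ corresponds only to the lowest-dimensional (single-column) cell. Second, your index range $0\le j\le n-1$ with exponent $n-1-j$ yields total degree $n$, not $n+1$; it should read $0\le j\le n$ with exponent $n-j$.
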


Bigrading indices on Ext follow the notation in Theorem~\ref{thm:etw}. This result is proved in Section~\ref{ssec:main} by showing that the Fox-Neuwirth cellular chain complex of $\mathrm{Conf}_{n+1}(\mathbb{C})$ with coefficients in the local system associated with the $A_{n+1}$-representation induced by the $B_n$-representation $V^{\otimes n}\otimes W$ is isomorphic to the internal degree $n+1$ part of a chain complex $F_*(\mathfrak{M},\mathfrak{I})$ that computes the Tor modules of $\mathfrak{M}$ (here $\mathfrak{I}$ is the augmentation ideal of $\mathfrak{A}$). As an application of Theorem~\ref{thm:main}, we will compute the homology of the Artin group $B_n$ with coefficients in $V^{\otimes n} \otimes W$, where $V = W = k$ are fields of characteristic $0$ and the braidings are given by multiplications by arbitrary units.

Along the way, we will also discuss two different approaches to study the homology of type-\textit{B} Artin groups with coefficients in braid representations. The first approach utilizes the identification of the homology of braid groups in Theorem~\ref{thm:etw} by taking coefficients in the induced representation of any given type-\textit{B} braid representation (see Section~\ref{ssec:ind_rep} and Corollary~\ref{cor:etw_rewrite}). The second method involves constructing a cellular stratification for configuration spaces of the punctured complex plane, based on the Fox-Neuwirth/Fuks stratification of $\mathrm{Conf}_n(\mathbb{C})$ developed in \cite{fn62,fuk70} (see Section~\ref{sec:fnf}). A similar description may be found by utilizing the Salvetti complex, which provides a tool to compute the homology of spherical Artin groups with any coefficients \cite{sal87,sal94}; for the purpose of this paper, our stratification has the advantage of a strong geometric intuition and compatibility with the specific coefficient system of interest. These methods are shown to be equivalent and hence offer a flexible approach to the computation of the homology of Artin groups of type \textit{B}.

For instance, in a recent preprint \cite{h23_res}, we generalized and combined these techniques to study the homology of configuration spaces of the plane with multiple punctures and prove vanishing theorems for the homology of bicolor configuration spaces with coefficients arising from braided vector spaces and with certain one-dimensional twisted coefficients. As an arithmetic application, we produced an upper bound on character sums of the resultant over pairs of monic squarefree polynomials of given degrees. Constructed in a comparable generality with that of Ellenberg--Tran--Westerland \cite{etw17}, this new setup also provides a means to study variations of Malle's conjecture for function fields as well as other problems relying on counting rational points on covers of configuration spaces of a general curve, e.g., very recent work of Ellenberg and Landesman on the distribution of Selmer groups of quadratic twist families of abelian varieties over function fields \cite{ellenberg-landesman}.


\section{Shuffle algebras and representations of Artin groups}\label{sec:alg}

In this section, we will give a brief review of algebra from braid representations, mainly braided vector spaces and quantum shuffle algebras (their treatments in Section~\ref{ssec:bvs_qsa} are summarized from \cite{etw17}). We will also develop a family of braid representations for the Artin groups of type \textit{B} and study their induced representations of braid groups. Let $k$ be a field; unless otherwise noted, all tensor products will be over $k$.


\subsection{Braided vector spaces and quantum shuffle algebras}\label{ssec:bvs_qsa}
Recall that the $n^{\mathrm{th}}$ \textit{braid group} $A_n$ (equivalently the $n - 1^\mathrm{st}$ \textit{Artin group of type A}) is presented as
\[A_n = \langle \sigma_1, ..., \sigma_{n-1} : \sigma_i \sigma_j = \sigma_j \sigma_i \text{ if } |i-j| > 1; \sigma_i \sigma_{i+1} \sigma_i = \sigma_{i+1} \sigma_i \sigma_{i+1} \rangle.\]

The groupoid $\mathscr{A} = \sqcup_{n \ge 0} [*/{A_n}]$ of all braid groups has the structure of a braided monoidal category. The family of strictly monoidal functors $\Phi : \mathscr{A} \to \mathrm{FinVect}_k$ forms the category of \textit{monoidal braid representations}, where $\mathrm{FinVect}_k$ denotes the category of finite dimensional $k$-vector spaces.

\begin{defn}
A \textit{braided vector space} $V$ over $k$ is a finite dimensional $k$-vector space equipped with an invertible \textit{braiding} $\sigma : V \otimes V \to V \otimes V$ such that it satisfies the braid equation on $V^{\otimes 3}$:
\[(\sigma \otimes \mathrm{id}) \circ (\mathrm{id} \otimes \sigma) \circ (\sigma \otimes \mathrm{id}) = (\mathrm{id} \otimes \sigma) \circ (\sigma \otimes \mathrm{id}) \circ (\mathrm{id} \otimes \sigma).\]
\end{defn}

Braided vector spaces form a category where morphisms $(V_1, \sigma_1) \to (V_2,\sigma_2)$ are $k$-linear maps $f:V_1 \to V_2$ that satisfy $(f \otimes f) \circ \sigma_1 = \sigma_2 \circ (f \otimes f)$. There is a natural action of $A_n$ on $V^{\otimes n}$ defined by $\sigma_i \mapsto \mathrm{id}^{\otimes i-1} \otimes \sigma \otimes \mathrm{id}^{\otimes n-i-1}$.

\begin{prop}[\cite{etw17}]
There is a pair of inverse equivalences between the categories of monoidal braid representations and braided vector spaces that send $\Phi$ to $\Phi(1)$ and $V$ to the braid representation on $V^{\otimes n}$ discussed above.
\end{prop}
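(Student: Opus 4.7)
The plan is to exhibit functors in both directions and verify that they are mutually inverse. In one direction, given a braided vector space $(V,\sigma)$, I would define $\Psi(V)$ to be the strictly monoidal functor $\Phi_V : \mathscr{A} \to \mathrm{FinVect}_k$ sending $n \mapsto V^{\otimes n}$ and $\sigma_i \mapsto \mathrm{id}^{\otimes i-1} \otimes \sigma \otimes \mathrm{id}^{\otimes n-i-1}$. To see this is well-defined on $A_n$, I would check the Artin relations: the far commutation $\sigma_i\sigma_j = \sigma_j\sigma_i$ for $|i-j|>1$ is immediate since the two braidings act on disjoint pairs of tensor factors, while the adjacent braid relation $\sigma_i\sigma_{i+1}\sigma_i = \sigma_{i+1}\sigma_i\sigma_{i+1}$ reduces, after tensoring with identities on the unaffected slots, to the braid equation imposed on $\sigma$.

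In the opposite direction, given a monoidal braid representation $\Phi$, I would define $E(\Phi) = (\Phi(1), \sigma_\Phi)$, where $\sigma_\Phi := \Phi(\sigma_1)$ is viewed as an endomorphism of $\Phi(2) = \Phi(1)^{\otimes 2}$ via the strict monoidal structure. To confirm this produces a braided vector space, I would apply $\Phi$ to the braid relation $\sigma_1\sigma_2\sigma_1 = \sigma_2\sigma_1\sigma_2$ holding in $A_3$: strict monoidality identifies $\Phi(\sigma_1)$ and $\Phi(\sigma_2)$ on $\Phi(1)^{\otimes 3}$ with $\sigma_\Phi \otimes \mathrm{id}$ and $\mathrm{id} \otimes \sigma_\Phi$ respectively, so the equation reads as the braid equation for $\sigma_\Phi$. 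On morphisms, a monoidal natural transformation $\Phi \to \Phi'$ has a component $f_1 : \Phi(1) \to \Phi'(1)$ which, by naturality at the morphism $\sigma_1 \in A_2$, intertwines the braidings.

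It is immediate from the constructions that $E \circ \Psi$ is the identity on the nose. The substance of the statement, and the step I expect to be the main obstacle, is exhibiting a natural isomorphism $\Psi \circ E \cong \mathrm{id}$ on monoidal braid representations. The key observation is that the groupoid $\mathscr{A}$, equipped with the stacking monoidal product $A_m \times A_n \to A_{m+n}$, is freely generated as a strict braided monoidal category by a single object and a single braiding; concretely, for any $\Phi$ strict monoidality forces $\Phi(n) = \Phi(1)^{\otimes n}$, and every Artin generator $\sigma_i \in A_n$ equals the monoidal composite $1_{i-1} \oplus \sigma_1 \oplus 1_{n-i-1}$, so $\Phi(\sigma_i)$ is completely determined by $\Phi(\sigma_1)$. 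Since the $\sigma_i$ generate $A_n$, this forces $\Phi = \Phi_{E(\Phi)}$, and I would conclude by verifying that this equality is natural in $\Phi$ and preserves the monoidal structure, which reduces to checking it on the single generating morphism $\sigma_1$.
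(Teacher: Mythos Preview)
The paper does not supply its own proof of this proposition; it is stated with attribution to \cite{etw17} and no argument is given in the text. So there is nothing to compare against on the paper's side.

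Your proposal is correct and is the standard argument. The essential content is the observation that $\mathscr{A}$ is the free (strict) braided monoidal category on one object, which you unpack concretely: strict monoidality forces $\Phi(n)=\Phi(1)^{\otimes n}$, and since each Artin generator $\sigma_i\in A_n$ is the monoidal product $\mathrm{id}_{i-1}\otimes\sigma_1\otimes\mathrm{id}_{n-i-1}$ in $\mathscr{A}$, strict monoidality also forces $\Phi(\sigma_i)=\mathrm{id}^{\otimes i-1}\otimes\Phi(\sigma_1)\otimes\mathrm{id}^{\otimes n-i-1}$. Two small points you could tighten: first, you should say a word about functoriality of $\Psi$ on morphisms (a map $f:(V,\sigma)\to(V',\sigma')$ of braided vector spaces induces the monoidal natural transformation with components $f^{\otimes n}$, and the intertwining condition $(f\otimes f)\sigma=\sigma'(f\otimes f)$ is exactly naturality at $\sigma_1$); second, since you obtain literal equality $\Phi=\Phi_{E(\Phi)}$ rather than a mere isomorphism, you may as well state the conclusion as an isomorphism of categories, not just an equivalence.
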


Recall that an $(n,m)$-\textit{shuffle} $\gamma : \{1,...,n\} \sqcup \{1,...,m\} \to \{1,...,n+m\}$ is a bijection that preserves orders on both $\{1,...,n\}$ and $\{1,...,m\}$. Alternatively, an $(n,m)$-shuffle is a permutation in $S_{n+m}$ that preserves order on the first $n$ and the last $m$ elements. Given any shuffle $\gamma$, there is a choice of a lift $\tilde{\gamma} \in A_{n+m}$ given by the braid that shuffles the endpoints according to $\gamma$ by moving the right $m$ strands in front of the left $n$ strands (see Figure~\ref{fig:shuf_lift}).

\begin{figure}[t]
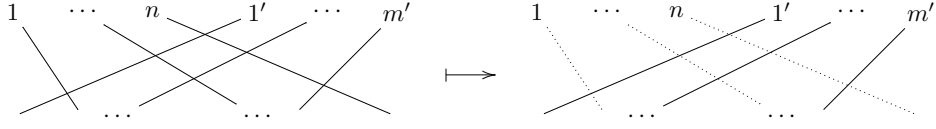

    \centering
    \[ 
\resizebox{\textwidth}{!}{
	\xy
		{\ar@{-} (0,0)*+{}; (35,15)*+{1'} };
		{\ar@{-} (15,0)*+{\cdots}; (45,15)*+{\cdots} }; 
		{\ar@{-} (40,0)*+{}; (55,15)*+{m'} }; 
		{\ar@{-} (10,0)*+{}; (0,15)*+{1} }; 
		{\ar@{-} (35,0)*+{\cdots}; (10,15)*+{\cdots} }; 
		{\ar@{-} (55,0)*+{}; (20,15)*+{n} }; 
		{\ar@{|->} (62,6)*{}; (69,6)*{}};
		{\ar@{-} (75,0)*+{}; (110,15)*+{1'} };
		{\ar@{-} (90,0)*+{\cdots}; (120,15)*+{\cdots} }; 
		{\ar@{-} (115,0)*+{}; (130,15)*+{m'} }; 
		{\ar@{..} (85,0)*+{}; (75,15)*+{1} }; 
		{\ar@{..} (110,0)*+{\cdots}; (85,15)*+{\cdots} }; 
		{\ar@{..} (130,0)*+{}; (95,15)*+{n} }; 
	\endxy
}
\]
    \caption{Lifting an $(n,m)$-shuffle to a braid.}
    \label{fig:shuf_lift}
\end{figure}

Let $(V,\sigma)$ be a braided vector space. We will write elements of $V^{\otimes n}$ using bar complex notation, i.e. $[a_1|...|a_n]$.

\begin{defn}
The \textit{quantum shuffle algebra} $\mathfrak{A}(V)$ is a braided, graded bialgebra: its underlying coalgebra is the tensor coalgebra
\[ T^{co}(V) = \displaystyle \bigoplus_{n \ge 0} V^{\otimes n}\]
equipped with a multiplication given by the quantum shuffle product:
\[\displaystyle [a_1 | ... | a_n] \star [b_1 | ... | b_m] = \sum_{\gamma} \tilde{\gamma} [a_1 | ... | a_n | b_1 | ... | b_m]\]
where the sum is over all $(n,m)$-shuffles $\gamma$.
\end{defn}

The quantum shuffle algebra has the structure of a Hopf algebra in a braided monoidal category. For more specific properties of this algebra, see \cite{mil58,ros98,leb13,ksv14,etw17}.


\subsection{Left-braided vector spaces}\label{ssec:lbvs}

The purpose of this subsection is to develop a representation for the Artin groups of type \textit{B}, using an analogue of the braided vector spaces. Recall that the $n^\mathrm{th}$ \textit{Artin group of type B} is presented by
\[B_n = \left\langle \sigma_1, ..., \sigma_{n-1},\tau_n : \begin{array}{l}
     \sigma_i \sigma_j = \sigma_j \sigma_i \text{ if } |i-j| > 1 \\
     \sigma_i \tau_n = \tau_n \sigma_i\text{ and } \sigma_i \sigma_{i+1} \sigma_i = \sigma_{i+1} \sigma_i \sigma_{i+1} \text{ if } i \ne n-1 \\
     \sigma_{n-1} \tau_n \sigma_{n-1} \tau_n = \tau_n \sigma_{n-1} \tau_n \sigma_{n-1}
\end{array}
\right\rangle.\]
There is a natural inclusion of $B_n$ into the braid group $A_{n+1}$ that sends $\sigma_i$ to the corresponding $\sigma_i$ in $A_{n+1}$ for all $1\le i \le n-1$ and sends $\tau_n$ to $\sigma^2_n$. This identification presents an interesting geometric interpretation of the Artin group $B_n$ that proves to be very useful for the approach of this paper.

\begin{prop}[\cite{cri99}]
$B_n$ is isomorphic to the finite index subgroup of the $(n+1)$-strand braid group $A_{n+1}$ consisting of braids whose the last strand is pure, i.e. the $n+1^\mathrm{st}$ endpoint is connected to itself in the braid.
\end{prop}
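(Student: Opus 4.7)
The plan is to realize both $B_n$ and the pure-last-strand subgroup of $A_{n+1}$ as fundamental groups of homotopy-equivalent configuration spaces, then match the resulting topological isomorphism with the explicit algebraic map $\phi\colon B_n \to A_{n+1}$ determined by $\phi(\sigma_i) = \sigma_i$ for $1 \le i \le n-1$ and $\phi(\tau_n) = \sigma_n^2$. First I would check that $\phi$ is a well-defined homomorphism by verifying the defining relations of $B_n$: the commutation and braid relations among $\sigma_1, \ldots, \sigma_{n-1}$ are inherited from $A_{n+1}$; the commutation $\sigma_i \sigma_n^2 = \sigma_n^2 \sigma_i$ for $i \le n-2$ follows from $\sigma_i \sigma_n = \sigma_n \sigma_i$; and the mixed relation $\sigma_{n-1}\tau_n\sigma_{n-1}\tau_n = \tau_n\sigma_{n-1}\tau_n\sigma_{n-1}$ reduces to two applications of the type-A braid relation $\sigma_{n-1}\sigma_n\sigma_{n-1} = \sigma_n\sigma_{n-1}\sigma_n$. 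Denoting by $H \le A_{n+1}$ the subgroup of braids fixing the last strand, it is clear that $\phi(B_n) \subseteq H$, since $\sigma_i$ for $i < n$ does not touch strand $n+1$ and $\sigma_n^2$ has trivial image under $A_{n+1} \twoheadrightarrow S_{n+1}$; moreover $H$ is the preimage of $\mathrm{Stab}_{S_{n+1}}(n+1) \cong S_n$, hence of index $n+1$, confirming the finite-index claim.

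To show that $\phi$ is an isomorphism onto $H$, I would appeal to the topological model. Let $Y \coloneqq \mathrm{Conf}_{n+1}(\mathbb{C})/S_n$ be the partially-ordered configuration space in which the first $n$ points are unordered while the last is distinguished; standard covering-space considerations identify $\pi_1(Y) \cong H$. The projection $Y \to \mathbb{C}$ onto the distinguished coordinate is a locally trivial fibration with contractible base and fiber homotopy equivalent to $\mathrm{UConf}_n(\mathbb{C}^*)$, so $Y \simeq \mathrm{UConf}_n(\mathbb{C}^*)$. Combined with the classical identification $B_n \cong \pi_1(\mathrm{UConf}_n(\mathbb{C}^*))$---the type-B Artin group realized as braids in a once-punctured disk (equivalently in an annulus)---this produces an abstract isomorphism $B_n \cong H$.

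It then remains to check that this topological isomorphism coincides with $\phi$. Under the punctured-disk realization of $B_n$, the generator $\sigma_i$ for $i \le n-1$ is a half-twist of adjacent non-puncture strands, and the fiber inclusion $\mathrm{UConf}_n(\mathbb{C}^*) \hookrightarrow Y$ sends it to the Artin generator $\sigma_i$ of $A_{n+1}$. The generator $\tau_n$ is a small loop of the $n$-th strand around the puncture, and promoting the puncture to the $(n+1)$-st strand turns this full encirclement into a full twist of the last two strands, namely $\sigma_n^2$.

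The main obstacle is the generator-matching in this final step: one must track base-points, orientations, and the precise form of the fiber inclusion carefully enough to confirm that $\tau_n$ really maps to $\sigma_n^2$ and not to a conjugate or inverse variant. A purely algebraic alternative would be to apply Reidemeister--Schreier to $H \le A_{n+1}$ with the coset representatives $1, \sigma_n, \sigma_{n-1}\sigma_n, \ldots, \sigma_1\sigma_2\cdots\sigma_n$ and compare the resulting presentation against that of $B_n$; this is mechanical but combinatorially heavier than the geometric route.
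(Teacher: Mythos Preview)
The paper does not actually prove this proposition: it is stated with a citation to \cite{cri99} and used as a black box, with the subsequent proposition on cosets of $B_n$ in $A_{n+1}$ again attributing the key fact to Crisp. So there is no ``paper's own proof'' to compare against.

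That said, your sketch is a correct and standard argument. The identification $H = \pi_1\big(\mathrm{Conf}_{n+1}(\mathbb{C})/S_n\big)$, the fibration $\mathrm{Conf}_{n+1}(\mathbb{C})/S_n \to \mathbb{C}$ with fiber $\mathrm{UConf}_n(\mathbb{C}^\times)$, and the annulus-braid model for $B_n$ together do give an isomorphism $B_n \cong H$, and your verification that $\phi$ respects the defining relations is fine. You are right to flag the generator-matching as the delicate point; in practice this is exactly where one either fixes conventions carefully on the geometric side or falls back on the Reidemeister--Schreier computation you mention, which is how the result is typically proved in the Artin-group literature (and is essentially Crisp's approach). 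Either route completes the argument.
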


Given a representation of $A_{n+1}$, we may obtain a representation of the subgroup $B_n$ by means of the restricted representation. Let $(V,\sigma)$ be a braided vector space. Recall that we have an $A_{n+1}$-representation on $V^{\otimes n+1}$, which we can restrict to a $B_n$-representation on the same vector space. The action of the generator $\sigma_i$ of $B_n$ for all $1 \le i \le n-1$ is the same as that of the corresponding $\sigma_i$ of $A_{n+1}$, while the last generator $\tau_n$ acts on $V^{\otimes n+1}$ by squaring the action of $\sigma_n$, i.e. $\tau_n \mapsto \mathrm{id}^{\otimes n-1} \otimes \sigma^2$. If we restrict this action on the last two tensor factors, it is clearly given by a mapping $\tau := \sigma^2 : V \otimes V \to V \otimes V$ that preserves the order of the factors; furthermore, in this restricted representation, there is no well-defined action of the group $B_n$ that applies the braiding $\sigma$ individually to these two factors. In other words, we lose information about the action of the generator $\sigma_n$ of $A_{n+1}$ on $V^{\otimes n+1}$, which can only be recovered partially as the ``square root'' of the action of $\tau_n \in B_n$ on the same space. We will generalize this restricted representation into a family of $B_n$-representations based on the observations above.

\begin{defn}
A \textit{left-braided vector space} $(V, W)$ over $k$ is a pair of finite dimensional $k$-vector spaces $V$ and $W$, where $V$ is a braided vector space with a braiding $\sigma$, further equipped with another isomorphism $\tau : V \otimes W \to V \otimes W$ such that it satisfies an additional braid equation on $V^{\otimes 2} \otimes W$:
\[(\sigma \otimes \mathrm{id}) \circ (\mathrm{id} \otimes \tau) \circ (\sigma \otimes \mathrm{id}) \circ (\mathrm{id} \otimes \tau)  = (\mathrm{id} \otimes \tau) \circ (\sigma \otimes \mathrm{id}) \circ (\mathrm{id} \otimes \tau) \circ (\sigma \otimes \mathrm{id}).\]
\end{defn}

We may define a morphism between left-braided vector spaces $(V_1,W_1,\sigma_1,\tau_1)$ and $(V_2,W_2,\sigma_2,\tau_2)$ to be a pair of $k$-linear maps $f_V :V_1 \to V_2$ and $f_W : W_1\to W_2$ where $f_V$ is a morphism of braided vector spaces $(V_1,\sigma_1) \to (V_2, \sigma_2)$ and $f_W$ satisfies the relation: $(f_V \otimes f_W) \circ \tau_1 = \tau_2 \circ (f_V \otimes f_W)$ on $V_1 \otimes W_1$. The collection of left-braided vector spaces then forms a category. Similar to the case of braided vector spaces, we may define an action of $B_n$ on $V^{\otimes n} \otimes W$ by $\sigma_i \mapsto \mathrm{id}^{\otimes i-1} \otimes \sigma \otimes \mathrm{id}^{n-i}$ for all $1 \le i \le n-1$ and $\tau_n \mapsto \mathrm{id}^{\otimes n-1} \otimes \tau$. From this identification, the following is straight-forward:

\begin{prop}\label{prop:Bn_rep}
Given a left-braided vector space $(V,W,\sigma,\tau)$, $V^{\otimes n} \otimes W$ provides a representation for the Artin group $B_n$.
\end{prop}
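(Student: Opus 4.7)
The plan is to verify directly that the assignment $\sigma_i \mapsto \mathrm{id}^{\otimes i-1} \otimes \sigma \otimes \mathrm{id}^{n-i}$ and $\tau_n \mapsto \mathrm{id}^{\otimes n-1} \otimes \tau$ respects each of the four families of relations in the given presentation of $B_n$. Since the definition extends generators to endomorphisms of $V^{\otimes n} \otimes W$ in an obvious way, it suffices to check the relations as identities of endomorphisms; invertibility of the generators will follow from invertibility of $\sigma$ and $\tau$.

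First I would handle the two commutation-type relations, which are essentially formal. The relation $\sigma_i \sigma_j = \sigma_j \sigma_i$ for $|i-j|>1$ holds because, under the given assignment, $\sigma_i$ and $\sigma_j$ act nontrivially on disjoint tensor factors of $V^{\otimes n}$ and leave $W$ alone; hence they commute in $\mathrm{End}(V^{\otimes n}\otimes W)$. Similarly, for $i \ne n-1$, the endomorphism assigned to $\sigma_i$ only modifies tensor positions $i$ and $i+1$, both strictly less than $n$, while the endomorphism assigned to $\tau_n$ only modifies positions $n$ and $W$; so $\sigma_i$ and $\tau_n$ act on disjoint factors and commute, giving $\sigma_i \tau_n = \tau_n \sigma_i$.

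Next, the type-$A$ braid relation $\sigma_i \sigma_{i+1} \sigma_i = \sigma_{i+1} \sigma_i \sigma_{i+1}$ for $i \ne n-1$ only involves three consecutive factors of $V^{\otimes n}$ and does not touch $W$. After stripping the identities on the untouched factors, this relation reduces to the braid equation
\[(\sigma \otimes \mathrm{id}) \circ (\mathrm{id} \otimes \sigma) \circ (\sigma \otimes \mathrm{id}) = (\mathrm{id} \otimes \sigma) \circ (\sigma \otimes \mathrm{id}) \circ (\mathrm{id} \otimes \sigma)\]
on $V^{\otimes 3}$, which holds by the assumption that $(V,\sigma)$ is a braided vector space.

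The main (and really the only substantive) step is the type-$B$ relation $\sigma_{n-1} \tau_n \sigma_{n-1} \tau_n = \tau_n \sigma_{n-1} \tau_n \sigma_{n-1}$. Both sides act as the identity on the first $n-2$ factors of $V$, so after again stripping these identities we obtain a claimed identity of endomorphisms of $V^{\otimes 2} \otimes W$. Unwinding the definitions, $\sigma_{n-1}$ restricts to $\sigma \otimes \mathrm{id}_W$ and $\tau_n$ restricts to $\mathrm{id}_V \otimes \tau$, so the relation to check is precisely
\[(\sigma \otimes \mathrm{id}) \circ (\mathrm{id} \otimes \tau) \circ (\sigma \otimes \mathrm{id}) \circ (\mathrm{id} \otimes \tau) = (\mathrm{id} \otimes \tau) \circ (\sigma \otimes \mathrm{id}) \circ (\mathrm{id} \otimes \tau) \circ (\sigma \otimes \mathrm{id}),\]
which is exactly the additional braid equation postulated in the definition of a left-braided vector space. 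Thus the assignment descends to a well-defined group homomorphism $B_n \to \mathrm{GL}(V^{\otimes n}\otimes W)$, completing the proof. No step is truly hard: the proposition is essentially a tautological repackaging of the defining relations of a left-braided vector space as precisely the relations needed to promote the obvious type-$A$ action on $V^{\otimes n}$ to a type-$B$ action on $V^{\otimes n} \otimes W$.
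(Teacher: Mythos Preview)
Your proposal is correct and follows exactly the approach the paper intends: the paper simply remarks that the result is ``straight-forward'' from the definition of the action, and your verification of each family of relations is precisely the implied argument, with the type-$B$ relation reducing to the additional braid equation defining a left-braided vector space.
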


\begin{exmp}\label{exmp:V=W=k}
Given $V=W=k$, we can define a left-braided vector space $(V,W)$ with braidings $\sigma$ and $\tau$ given by multiplications by $q$ and $p$ respectively, for some $p, q \in k^\times$. The braid action of $B_n$ on the representation $V^{\otimes n} \otimes W \cong k$ is therefore given by $\sigma_i \mapsto q$ for all $1 \le i \le n-1$ and $\tau_n \mapsto p$.
\end{exmp}

\begin{exmp}\label{exmp:V=bvs}
If $(V,\sigma)$ is a braided vector space, then $(V,V,\sigma,\sigma^2)$ forms a left-braided vector space. In this case, there is an obvious choice for the ``square root'' of the braiding $\tau = \sigma^2$; generally, this is not the case. We will discuss this matter in Section~\ref{ssec:ind_rep}.

The $B_n$-representation constructed from this left-braided vector space per Proposition~\ref{prop:Bn_rep} is precisely the restricted representation to $B_n$ of the previously described $A_{n+1}$-representation on $V^{\otimes n+1}$.
\end{exmp}

Recall that there is a bijection between the category of monoidal functors $\Phi: \mathscr{A} \to \mathrm{FinVect}_k$ and the category of braided vector spaces. There is a similar functorial description for the category of left-braided vector spaces. Define the category $\mathscr{B}$ of type-\textit{B} Artin groups to be the wide subcategory of the groupoid $\mathscr{A}$ of braid groups with morphisms given only by their type-\textit{B} subgroups: the objects $n$ of $\mathscr{B}$ are indexed by positive integers, and the morphisms in $\mathscr{B}$ are automorphisms of $n$ given by the group $B_{n-1}$, i.e. $\mathrm{Hom}_\mathscr{B} (n,n) = B_{n-1}$. There is a tensor product $\mathscr{A} \times \mathscr{B} \to \mathscr{B}$ induced by the homomorphism $A_n \times B_m \to B_{n+m}$ that places braids side-by-side. It is easy to see that this tensor product must agree with the tensor product in the groupoid $\mathscr{A}$ via the inclusion map $\mathscr{B} \hookrightarrow \mathscr{A}$, i.e. the diagram
\[ \begin{tikzcd}
\mathscr{A} \times \mathscr{B} \arrow[hookrightarrow]{r} \arrow[swap]{d}{\otimes} & \mathscr{A} \times \mathscr{A} \arrow{d}{\otimes} \\%
\mathscr{B} \arrow[hookrightarrow]{r}& \mathscr{A}
\end{tikzcd} \]
commutes (up to natural isomorphism), i.e. $\mathscr{B}$ is a left tensor ideal in $\mathscr{A}$.

Given a left-braided vector space $(V,W,\sigma,\tau)$, while a monoidal functor $\Phi : \mathscr{A} \to \mathrm{FinVect}_k$ is enough to capture all data of the braided vector space $(V,\sigma)$, we need an additional functor $\Psi:\mathscr{B} \to \mathrm{FinVect}_k$ to capture the information of the vector space $W$ and the braiding $\tau$. In addition, these functors must be compatible with the tensor product $\mathscr{A} \times \mathscr{B} \to \mathscr{B}$ discussed above. These observations lead to the following identification of the category of left-braided vector spaces.

\begin{prop}
There is an equivalence of categories between the category of left-braided vector spaces and the category $\mathscr{F}$ of pairs of functors $\Phi:\mathscr{A} \to \mathrm{FinVect}_k$ and $\Psi:\mathscr{B} \to \mathrm{FinVect}_k$ that satisfy the following conditions:
\begin{enumerate}
    \item $\Phi$ is a monoidal functor; and
    \item The diagram
    \[ \begin{tikzcd}
    \mathscr{A} \times \mathscr{B} \arrow{r}{\Phi \times \Psi} \arrow[swap]{d}{\otimes} & \mathrm{FinVect}_k \times \mathrm{FinVect}_k \arrow{d}{\otimes} \\%
    \mathscr{B} \arrow{r}{\Psi} & \mathrm{FinVect}_k
    \end{tikzcd} \]
    commutes (up to natural isomorphism).
\end{enumerate}
\end{prop}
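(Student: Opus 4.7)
The plan is to write down the forward and backward functors explicitly and verify they are mutually inverse. Since the braided-vector-space part $(V,\sigma) \leftrightarrow \Phi$ is already handled by the earlier equivalence of this section, the main content is to show that the additional data $(W,\tau)$ is captured by a functor $\Psi$ satisfying condition (2).

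Going forward, given a left-braided vector space $(V,W,\sigma,\tau)$, I take $\Phi$ as in the earlier equivalence and set $\Psi(n) := V^{\otimes(n-1)} \otimes W$ with the $B_{n-1}$-action from Proposition~\ref{prop:Bn_rep}. Condition (1) is immediate; for condition (2), the tensor product $A_k \times B_{\ell-1} \to B_{k+\ell-1}$ in $\mathscr{B}$ is side-by-side braid concatenation, and the canonical isomorphism $V^{\otimes(k+\ell-1)} \otimes W \cong V^{\otimes k} \otimes (V^{\otimes(\ell-1)} \otimes W)$ manifestly intertwines the two actions.

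Going backward, given $(\Phi, \Psi) \in \mathscr{F}$, I let $(V,\sigma)$ be the braided vector space recovered from $\Phi$, set $W := \Psi(1)$, and define $\tau := \Psi(\tau_1)$ acting on $\Psi(2) \cong V \otimes W$ (the isomorphism supplied by condition (2) applied at the pair $(1,1)$). The key verification is the defining braid equation on $V^{\otimes 2} \otimes W$: apply $\Psi$ to the type-B relation $\sigma_1 \tau_2 \sigma_1 \tau_2 = \tau_2 \sigma_1 \tau_2 \sigma_1$ in $B_2$, after identifying $\Psi(\sigma_1) = \sigma \otimes \mathrm{id}_W$ (by writing $\sigma_1 \in B_2$ as the image of $\sigma_1 \in A_2$ under $A_2 \times B_0 \to B_2$) and $\Psi(\tau_2) = \mathrm{id}_V \otimes \tau$ (by writing $\tau_2 \in B_2$ as the image of $\tau_1 \in B_1$ under $A_1 \times B_1 \to B_2$); both identifications are immediate from condition (2).

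Finally I would check the two constructions are inverse to one another. On the braided-vector-space side this is transparent. On the functor side, $\Psi$ is pinned down on objects by condition (2) (forcing $\Psi(n) \cong V^{\otimes(n-1)} \otimes W$) and on morphisms by decomposing each generator of $B_{n-1}$ through the tensor product: $\sigma_i$ comes from $\sigma_i \in A_{n-1}$, while $\tau_{n-1}$ comes from $1_{A_{n-2}} \times \tau_1$ under $A_{n-2} \times B_1 \to B_{n-1}$. I expect the main obstacle to be verifying consistency with all the defining relations of $B_{n-1}$: the type-A relations among the $\sigma_i$'s follow from monoidal functoriality of $\Phi$, the commutation $\sigma_i \tau_{n-1} = \tau_{n-1} \sigma_i$ for $i<n-2$ follows from the side-by-side naturality in condition (2), and the remaining type-B relation reduces to the braid equation for $(V,W,\sigma,\tau)$ already verified above; this bookkeeping is routine but must be handled carefully.
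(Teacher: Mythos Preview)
Your proposal is correct and follows essentially the same approach as the paper: construct $\Psi(n)=V^{\otimes(n-1)}\otimes W$ in one direction, recover $V=\Phi(1)$, $W=\Psi(1)$, $\sigma=\Phi(\sigma_1)$, $\tau=\Psi(\tau_1)$ in the other, and verify these are inverse equivalences. Your write-up is in fact more detailed than the paper's, which merely names the constructions and declares the verification ``straightforward''; your explicit identification of $\Psi(\sigma_1)$ and $\Psi(\tau_2)$ via the tensor maps $A_2\times B_0\to B_2$ and $A_1\times B_1\to B_2$, and your reduction of the $B_{n-1}$-relations to the defining braid equation, are exactly the bookkeeping the paper elides.
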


\begin{proof}
It is not hard to see that the collection of such pairs of functors $(\Phi,\Psi)$ forms a well-defined category when equipped with pairs of natural transformations as morphisms. Given a functor pair $(\Phi,\Psi) \in \mathscr{F}$, we obtain a left-braided vector space by setting $V = \Phi(1)$ and  $W = \Psi(1)$. The braiding morphism $\sigma$ is the image under $\Phi$ of the positive generator of $\mathrm{Hom}_\mathscr{A}(2,2) = A_2 \cong \mathbb{Z}$, while $\tau$ is obtained by applying $\Psi$ to the positive generator of $\mathrm{Hom}_\mathscr{B} (2,2) = B_1 \cong \mathbb{Z}$.
Condition (1) in the proposition enforces that $(V,\sigma)$ is a braided vector space by Proposition 2.2; meanwhile, condition (2) maintains that the tensor product in the representation is compatible with the tensor product in the categories $\mathscr{A}$ and $\mathscr{B}$. Conversely, given an arbitrary left-braided vector space $(V,W,\sigma,\tau)$, we have constructed above an $A_n$-representation on $V^{\otimes n}$ and a $B_m$-representation on $V^{\otimes m} \otimes W$. It is straightforward to verify that these identifications are inverses (up to natural isomorphism) and hence form a pair of inverse equivalences between the category of left-braided vector spaces and the category $\mathscr{F}$ of functor pairs $(\Phi,\Psi)$ that satisfy conditions (1-2).
\end{proof}


\subsection{Induced representation of braid groups}\label{ssec:ind_rep}

Recall that there is a natural inclusion of $B_n$ into the braid group $A_{n+1}$ that identifies elements of $B_n$ with braids of $n+1$ strands where the last strand is pure. Consider the left cosets of $B_n$ in $A_{n+1}$.

\begin{prop}
The collection of left cosets of $B_n$ in $A_{n+1}$ has the form
\[A_{n+1}/B_n = \{\alpha_{i,n+1} B_n : \alpha_{i,n+1} = \sigma_i ... \sigma_{n-1} \sigma_n \text{ for } 1 \le i \le n \text{ or }\mathrm{id} \text{ for } i = n+1\}.\]
\end{prop}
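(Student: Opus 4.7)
The plan is to use the natural surjection $\pi : A_{n+1} \twoheadrightarrow S_{n+1}$ sending a braid to the permutation of its strand endpoints. By the previous proposition, $B_n$ equals the preimage $\pi^{-1}(\mathrm{Stab}_{S_{n+1}}(n+1))$ of the stabilizer of $n+1$. Since $\pi$ is surjective and this stabilizer is isomorphic to $S_n$ and has index $n+1$ in $S_{n+1}$, I first conclude that $[A_{n+1}:B_n] = n+1$, pinning down the number of left cosets to be exhibited.

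Next, I would observe that the coset $\alpha B_n$ is completely determined by $\pi(\alpha)(n+1)$: indeed, $\alpha B_n = \alpha' B_n$ iff $\alpha^{-1}\alpha' \in B_n$, iff $\pi(\alpha^{-1}\alpha')$ fixes $n+1$, iff $\pi(\alpha)(n+1) = \pi(\alpha')(n+1)$. This identifies the coset space $A_{n+1}/B_n$ with the set $\{1, \ldots, n+1\}$ via $\alpha B_n \mapsto \pi(\alpha)(n+1)$, reducing the problem to producing, for each $i \in \{1, \ldots, n+1\}$, a braid whose projection sends $n+1$ to $i$.

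It then suffices to verify that the listed elements realize each value exactly once. Trivially $\pi(\mathrm{id})(n+1) = n+1$. For $1 \le i \le n$, the braid $\alpha_{i,n+1} = \sigma_i \sigma_{i+1} \cdots \sigma_n$ projects under $\pi$ to the product of transpositions $(i,\,i+1)(i+1,\,i+2)\cdots(n,\,n+1)$; tracing the endpoint $n+1$ through this composition yields $\pi(\alpha_{i,n+1})(n+1) = i$. Geometrically, $\alpha_{i,n+1}$ is the braid that drags the last strand across the intermediate strands into the $i$-th position via a chain of adjacent crossings. This exhibits $n+1$ distinct cosets, which by the index count exhausts $A_{n+1}/B_n$.

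The only part of the argument requiring care is the permutation calculation, where one must be consistent about whether $S_{n+1}$ acts on positions or on strands and about the order in which the transpositions compose. Once this convention is fixed, the computation $\pi(\alpha_{i,n+1})(n+1) = i$ reduces to a straightforward chase through adjacent transpositions, and the rest of the proof is essentially immediate from the identification of $B_n$ as the preimage of a point stabilizer under $\pi$.
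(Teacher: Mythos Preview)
Your proposal is correct and essentially identical to the paper's own argument: both identify $B_n$ as the preimage of the stabilizer of $n+1$ under the projection $A_{n+1}\to S_{n+1}$, show that two braids lie in the same left coset iff their underlying permutations send $n+1$ to the same point, and then verify that the given $\alpha_{i,n+1}$ realize each value of $i$. The only cosmetic difference is that you phrase the index computation via the surjection onto $S_{n+1}$ before characterizing the cosets, whereas the paper reads off the index directly from the coset characterization.
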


\begin{proof}
We claim that the left cosets $a B_n$ are indexed by the image of the $n+1^\mathrm{st}$ endpoint under the braid $a \in A_{n+1}$, and therefore $\left| A_{n+1}:B_n \right| = n+1$. This statement, particularly the second fact, was proved by Crisp in slightly different language \cite{cri99}. For any $a_1, a_2 \in A_{n+1}$, $a_1 B_n = a_2 B_n$ as cosets iff $a_1^{-1} a_2 \in B_n$. Let $\underline{a}$ denote the underlying permutation of a braid $a$, then this is equivalent to $\underline{a_1}^{-1} \underline{a_2} (n+1) = n+1$, or $\underline{a_1}(n+1) = \underline{a_2}(n+1)$. So we have a simple characterization of the cosets of $B_n$ in $A_{n+1}$: two braid elements of $A_{n+1}$ are in the same coset of $B_n$ if and only if their underlying permutations map $n+1$ to the same number. Since there are $n+1$ choices for the image, the index of $B_n$ in $A_{n+1}$ is $n+1$. Furthermore, we may explicitly choose representatives for the cosets of $B_n$ to be $\alpha_{i,n+1} = \sigma_i ... \sigma_{n-1} \sigma_n$ for $1 \le i \le n$ and $\alpha_{n+1,n+1} = \mathrm{id}$. Alternatively, for $1 \le i \le n+1$, the representative element $\alpha_{i,n+1}$ is the lift to $A_{n+1}$ (as described in Section~\ref{ssec:bvs_qsa}) of the $(n,1)$-shuffle that sends $n+1$ to $i$.
\end{proof}

The second index of a representative element $\alpha_{i,n+1}$ records the number of strands in the braid; when this datum is unambiguous it is omitted from the notation.

Given a representation of any subgroup, we may define a representation of the parent group by means of the \textit{induced representation}. Let $L$ be a representation of $B_n$. The braid representation of $B_n$ on $L$ induces a representation on
\[\mathrm{Ind}^{A_{n+1}}_{B_n} (L) = k[A_{n+1}] \displaystyle \otimes_{k[B_n]} L \]
of the braid group $A_{n+1}$. We may give a more detailed description of this induced representation based on the cosets of the subgroup $B_n$ in $A_{n+1}$ described above. Since the collection $\{\alpha_1, ..., \alpha_{n+1} \}$ gives a full set of representatives in $A_{n+1}$ for the left cosets of $B_n$, as vector spaces, the induced representation can be identified as
\[\mathrm{Ind}^{A_{n+1}}_{B_n} (L) \cong \displaystyle \bigoplus_{i=1}^{n+1} \alpha_i L.\]
Here each $\alpha_i L$ is an isomorphic copy of the vector space $L$ whose elements are written as $\alpha_i \ell$ where $\ell \in L$. We may give a concrete description of the action of the braid group on this induced representation.

\begin{prop}\label{prop:act_ind_rep}
The action of the braid group $A_{n+1}$ on the induced representation $\mathrm{Ind}^{A_{n+1}}_{B_n} (L)$ is given by
\[a \sum^{n+1}_{i=1} \alpha_i \ell_i = \sum^{n+1}_{i=1} \alpha_{\underline{a}(i)} \big[ (\alpha_{\underline{a}(i)}^{-1} a \alpha_i) (\ell_i) \big]\]
where $(\alpha_{\underline{a}(i)}^{-1} a \alpha_i) (\ell_i)$ is obtained by applying the action of $\alpha_{\underline{a}(i)}^{-1} a \alpha_i \in B_n$ on $\ell_i \in L$ for every $1 \le i \le n+1$.
\end{prop}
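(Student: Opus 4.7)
The plan is to reduce everything to coset bookkeeping in the tensor product $k[A_{n+1}]\otimes_{k[B_n]} L$, using only the characterization of left cosets of $B_n$ in $A_{n+1}$ established in the preceding proposition. By linearity, it suffices to verify the formula on a single summand $\alpha_i\ell_i$, which under the isomorphism $\mathrm{Ind}^{A_{n+1}}_{B_n}(L)\cong \bigoplus_i \alpha_i L$ corresponds to the simple tensor $\alpha_i\otimes \ell_i$.

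Next I would compute $a\cdot(\alpha_i\otimes\ell_i)=a\alpha_i\otimes\ell_i$ and reexpress $a\alpha_i$ in the form $\alpha_j b$ with $b\in B_n$, so that the relation $\alpha_j b\otimes \ell_i = \alpha_j\otimes b\ell_i$ over $k[B_n]$ can be applied. To pin down the index $j$, I would use the explicit formula $\alpha_{i,n+1}=\sigma_i\sigma_{i+1}\cdots\sigma_n$ (and $\alpha_{n+1,n+1}=\mathrm{id}$) to observe that the underlying permutation satisfies $\underline{\alpha_i}(n+1)=i$, so that $\underline{a\alpha_i}(n+1)=\underline{a}(i)$. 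By the coset criterion of the previous proposition, this forces $a\alpha_i\in \alpha_{\underline{a}(i)}B_n$, and hence $b:=\alpha_{\underline{a}(i)}^{-1}a\alpha_i$ lies in $B_n$. Substituting back gives $a\alpha_i\otimes\ell_i=\alpha_{\underline{a}(i)}\otimes b\ell_i$, which translates under the vector space identification to $\alpha_{\underline{a}(i)}\bigl[(\alpha_{\underline{a}(i)}^{-1}a\alpha_i)(\ell_i)\bigr]$.

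Summing over $i$ and noting that $i\mapsto \underline{a}(i)$ is a bijection on $\{1,\dots,n+1\}$ (so there is no ambiguity in collecting terms) yields the proposition. The main thing to be careful about is the index identity $\underline{\alpha_i}(n+1)=i$, on which the rest of the argument hinges; once it is in place, the computation is purely formal manipulation in the tensor product over $k[B_n]$.
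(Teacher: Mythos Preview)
Your proof is correct and follows essentially the same approach as the paper's: both arguments identify the coset of $a\alpha_i$ by tracking the image of $n+1$ under the underlying permutation, using $\underline{\alpha_i}(n+1)=i$ to conclude $j=\underline{a}(i)$, and then read off $b_i=\alpha_{\underline{a}(i)}^{-1}a\alpha_i\in B_n$. The only cosmetic difference is that you work forward from $\underline{a\alpha_i}(n+1)$ while the paper first posits $a\alpha_i=\alpha_j b_i$ and then solves for $j$.
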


\begin{proof}
Since the collection $\{ \alpha_i \}^{n+1}_{i=1}$ forms a full set of representatives, for each $a \in A_{n+1}$ and each $\alpha_i$, there exist $b_i \in B_n$ and $\alpha_j$ such that $a \alpha_i = \alpha_j b_i$. The action of $a$ on an element in the induced representation is defined by
\[ a \sum^{n+1}_{i=1} \alpha_i \ell_i = \sum^{n+1}_{i=1} \alpha_j [b_i (\ell_i)] \]
where the action of $b_i$ on $\ell_i$ is defined by the $B_n$-representation $L$. In this case, we can make specific choices for the elements $\alpha_j$ and $b_i$. Since the underlying permutation of $b_i = \alpha_j^{-1} a \alpha_i \in B_n$ preserves $n+1$, it follows that $\underline{\alpha_j} (n+1) = \underline{a} \big[\underline{\alpha_i}(n+1)\big] = \underline{a} (i)$, thus $j = \underline{a}(i)$. Hence the choices of $\alpha_j = \alpha_{\underline{a}(i)}$ and $b_i = \alpha_{\underline{a}(i)}^{-1} a \alpha_i$ for each summand in the element of the representation give the desired action of the braid group $A_{n+1}$ on the induced representation $\mathrm{Ind}^{A_{n+1}}_{B_n} (L)$.
\end{proof}

\begin{cor}
The action of the generators of $A_{n+1}$ on $\mathrm{Ind}^{A_{n+1}}_{B_n}(L)$ can be expressed in terms of the action of the generators of $B_n$ in the following way:
\[ \sigma_m(\alpha_i \ell) = \begin{cases}
\alpha_i [\sigma_m (\ell)] \hspace{1.4in} \hfill 1 \le m \le i-2 \\
\alpha_{i-1} \ell \hfill m = i-1 \\
\alpha_{i+1} [(\alpha_i \tau_n \alpha_i^{-1})(\ell)] \hfill m = i \\
\alpha_i [\sigma_{m-1} (\ell)] \hfill i+1 \le m \le n+1.
\end{cases}
\]
\end{cor}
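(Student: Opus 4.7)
The plan is to specialize Proposition~\ref{prop:act_ind_rep} to the generator $a = \sigma_m$ acting on a single summand $\alpha_i \ell$, and then simplify the resulting element $b_i = \alpha_{\underline{\sigma_m}(i)}^{-1} \sigma_m \alpha_i \in B_n$ separately in each range of $m$. Since the underlying permutation of $\sigma_m$ is the transposition $(m,\,m+1)$, the image $\underline{\sigma_m}(i)$ equals $i$ whenever $m \notin \{i-1, i\}$, while $\underline{\sigma_{i-1}}(i) = i-1$ and $\underline{\sigma_i}(i) = i+1$. This accounts for the four-case dichotomy, since the regime $\underline{\sigma_m}(i) = i$ splits further according to whether $m \le i-2$ or $m \ge i+1$; the conjugate $\alpha_i^{-1} \sigma_m \alpha_i$ simplifies differently in each range.

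Three of the four cases are immediate consequences of the braid relations, recalling that $\alpha_i = \sigma_i \sigma_{i+1} \cdots \sigma_n$ and $\alpha_{n+1} = \mathrm{id}$. For $m \le i-2$, the generator $\sigma_m$ commutes with every factor of $\alpha_i$, so $b_i = \alpha_i^{-1} \sigma_m \alpha_i = \sigma_m$. For $m = i-1$, one directly has $\sigma_{i-1} \alpha_i = \alpha_{i-1}$, hence $b_i = \mathrm{id}$. For $m \ge i+1$, I would establish $\sigma_m \alpha_i = \alpha_i \sigma_{m-1}$ by commuting $\sigma_m$ past $\sigma_i, \dots, \sigma_{m-2}$, applying the braid relation $\sigma_m \sigma_{m-1} \sigma_m = \sigma_{m-1} \sigma_m \sigma_{m-1}$, and then commuting the trailing $\sigma_{m-1}$ past $\sigma_{m+1}, \dots, \sigma_n$; this yields $b_i = \sigma_{m-1}$.

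The substantive case is $m = i$, where
\[ b_i = \alpha_{i+1}^{-1} \sigma_i \alpha_i = (\sigma_{i+1} \cdots \sigma_n)^{-1} \sigma_i^2 (\sigma_{i+1} \cdots \sigma_n), \]
and this must be identified with the element $\alpha_i \tau_n \alpha_i^{-1} \in B_n$, which (using $\tau_n = \sigma_n^2$ under the inclusion $B_n \hookrightarrow A_{n+1}$) becomes the conjugate $(\sigma_i \sigma_{i+1} \cdots \sigma_{n-1}) \sigma_n^2 (\sigma_i \sigma_{i+1} \cdots \sigma_{n-1})^{-1}$ in $A_{n+1}$. Both sides visibly lie in $B_n$ since their underlying permutations fix $n+1$, but verifying the equality itself is the main obstacle. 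The key computational input is the identity
\[ \sigma_{j+1}^{-1} \sigma_j^2 \sigma_{j+1} = \sigma_j \sigma_{j+1}^2 \sigma_j^{-1}, \]
which follows by squaring $\sigma_{j+1}^{-1} \sigma_j \sigma_{j+1} = \sigma_j \sigma_{j+1} \sigma_j^{-1}$, itself a direct consequence of the braid relation. Applying this identity with $j = i$, pulling the resulting $\sigma_i$ outside the remaining word (possible because $\sigma_i$ commutes with $\sigma_{i+2}, \dots, \sigma_n$), and iterating with $j = i+1, i+2, \dots, n-1$ shifts the squared generator from $\sigma_i^2$ up to $\sigma_n^2$ one index at a time, converting the left-hand conjugating word $(\sigma_{i+1} \cdots \sigma_n)^{-1}$ into the right-hand $(\sigma_i \sigma_{i+1} \cdots \sigma_{n-1})$ factor by factor. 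Combining the four case computations with Proposition~\ref{prop:act_ind_rep} then yields the stated formulas.
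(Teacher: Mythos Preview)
Your proposal is correct. The overall structure---specializing Proposition~\ref{prop:act_ind_rep} to $a=\sigma_m$ and splitting into four cases according to the value of $\underline{\sigma_m}(i)$---matches the paper exactly, and the arguments for $m\le i-2$ and $m=i-1$ are identical to the paper's. The difference lies in the last two cases. For $m\ge i+1$ the paper runs a backward induction on $i$ (with base cases $i=n$ and $i=n-1$), invoking the already-established identity $\alpha_{i-1}=\sigma_{i-1}\alpha_i$ at each step; you instead prove $\sigma_m\alpha_i=\alpha_i\sigma_{m-1}$ in one shot by a direct braid-word manipulation. For $m=i$ the paper again uses backward induction on $i$, while you iterate the conjugation identity $\sigma_{j+1}^{-1}\sigma_j^2\sigma_{j+1}=\sigma_j\sigma_{j+1}^2\sigma_j^{-1}$ to shift the squared generator from $\sigma_i^2$ up to $\sigma_n^2$. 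Your route has the virtue that each case is self-contained and the key identity in the $m=i$ case makes the mechanism (conjugation transports the square along the chain) transparent; the paper's inductive packaging is slightly more uniform across the two nontrivial cases but leans on the earlier cases inside the induction step.
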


Strictly speaking, the element of $B_n$ presented in the formula when $m = i$ is not entirely in terms of the generators of $B_n$, since $\alpha_i$ contains the braid element $\sigma_n \in A_{n+1}\backslash{B_n}$. However, it can be rewritten as $\alpha_i \tau_n \alpha_i^{-1} = \sigma_i ... \sigma_n \sigma_n^2 \sigma_n^{-1} ... \sigma_i^{-1} = \sigma_i ... \sigma_{n-1} \sigma_n^2 \sigma_{n-1}^{-1} ... \sigma_i^{-1} = \sigma_i ... \sigma_{n-1} \tau_n \sigma_{n-1}^{-1} ... \sigma_i^{-1} \in B_n$.

\begin{proof}
We will prove this statement case-by-case.

\textit{Case 1 ($1 \le m \le i-2$):} This case follows directly from the commutativity of non-adjacent braid generators.

\textit{Case 2 ($m = i-1$):} Observe that as braid elements, $\alpha_{i-1} = \sigma_{i-1} \alpha_i$. Hence $\sigma_{i-1}(\alpha_i\ell) = \alpha_{i-1} [(\alpha_{i-1}^{-1} \sigma_{i-1} \alpha_i)(\ell)] = \alpha_{i-1} \ell$.

\textit{Case 3 ($m = i$):} It is equivalent to prove that $\alpha_{i+1}^{-1} \sigma_i \alpha_i = \alpha_i \sigma_n^2 \alpha_i^{-1}$ as braid elements for all $i$. We will prove this case by induction on $i$ with the starting index $i = n$. The base case is simple: $\alpha_{n+1}^{-1} \sigma_n \alpha_n = (\mathrm{id})\sigma_n \sigma_n = \sigma_n^2$.

Suppose the hypothesis holds for all $k \ge i$, i.e. $\alpha_{k+1}^{-1} \sigma_k \alpha_k = \alpha_k \sigma_n^2 \alpha_k^{-1}$. To show that it holds for $i-1$, first we apply the statement of case 2 to the right side: $\alpha_{i-1} \sigma_n^2 \alpha_{i-1}^{-1} = (\sigma_{i-1} \alpha_i) \sigma_n^2 (\alpha_i^{-1} \sigma_{i-1}^{-1})$. By the induction hypothesis, it follows that
\[ \begin{array}{l}
\sigma_{i-1} (\alpha_i \sigma_n^2 \alpha_i^{-1}) \sigma_{i-1}^{-1} = \sigma_{i-1} (\alpha_{i+1}^{-1} \sigma_i \alpha_i) \sigma_{i-1}^{-1} = \alpha_{i+1}^{-1} \sigma_{i-1} \sigma_i (\sigma_i \alpha_{i+1}) \sigma_{i-1}^{-1}\\[5pt]
= \alpha_i^{-1} (\sigma_i \sigma_{i-1} \sigma_i) \sigma_i \sigma_{i-1}^{-1} \alpha_{i+1} = \alpha_i^{-1} \sigma_{i-1} (\sigma_i \sigma_{i-1} \sigma_i) \sigma_{i-1}^{-1} \alpha_{i+1}\\[5pt]
= \alpha_i^{-1} \sigma_{i-1} \sigma_{i-1} \sigma_i \sigma_{i-1} \sigma_{i-1}^{-1} \alpha_{i+1} = \alpha_i^{-1} \sigma_{i-1} (\sigma_{i-1} \sigma_i \alpha_{i+1}) = \alpha_i^{-1} \sigma_{i-1} \alpha_{i-1}.
\end{array}\]

\textit{Case 4 ($i+1 \le m \le n+1)$:} We will use another induction argument on $i$ with the starting index $i = n$. The base case $i = n$ is vacuously true. For $i = n-1$, we only need to check when $m = n$; this case follows directly from the braid relation in $A_{n+1}$: $\sigma_n (\alpha_{n-1}\ell) = \alpha_{n-1}[(\alpha_{n-1}^{-1} \sigma_n \alpha_{n-1})(\ell)] = \alpha_{n-1} [(\sigma_n^{-1} \sigma_{n-1}^{-1} \sigma_n \sigma_{n-1} \sigma_n)(\ell)] = \alpha_{n-1} [(\sigma_n^{-1} \sigma_{n-1}^{-1} \sigma_{n-1} \sigma_n \sigma_{n-1})(\ell)] = \alpha_{n-1}[\sigma_{n-1}(\ell)].$

Suppose the statement holds true for all $k \ge i$, i.e. $\sigma_m (\alpha_k \ell) = \alpha_k [\sigma_{m-1}(\ell)]$ for all $k+1 \le m \le n+1$. We will make use of the fact that for all $k$, $\alpha_{k-1}\ell = \sigma_{k-1} (\alpha_k \ell)$ as proved in case 2. For $m \ge i+1$, $\sigma_m (\alpha_{i-1} \ell) = \sigma_m \sigma_{i-1} (\alpha_i \ell) = \sigma_{i-1} \sigma_m (\alpha_i \ell) = \sigma_{i-1} \alpha_i [\sigma_{m-1} (\ell)] = \alpha_{i-1} [\sigma_{m-1} (\ell)].$ For $m=i$, $\sigma_i (\alpha_{i-1} \ell) = \sigma_i \sigma_{i-1} \sigma_i (\alpha_{i+1} \ell) = \sigma_{i-1} \sigma_i \sigma_{i-1} (\alpha_{i+1} \ell) = \sigma_{i-1}\sigma_i \alpha_{i+1}[\sigma_{i-1}(\ell)] = \alpha_{i-1}[\sigma_{i-1}(\ell)].$

We have verified the action of the generators of $A_{n+1}$ on $\mathrm{Ind}^{A_{n+1}}_{B_n} (L)$ for all cases, hence the proof is finished. 
\end{proof}

This corollary highlights a benefit of our choice of the coset representatives $\{\alpha_i\}^{n+1}_{i=1}$. Since the composition of braid actions works well with the braid multiplication, the fact that we understand the action of the generators of $A_{n+1}$ in terms of the action of the generators of $B_n$ implies that it is possible to decompose a general braid action on $\mathrm{Ind}^{A_{n+1}}_{B_n} (L)$ into a series of actions of the generators of the subgroup $B_n$ on the original representation $L$.

Consider when $L = V^{\otimes n} \otimes W$, the representation of $B_n$ formed from the left-braided vector space $(V,W,\sigma,\tau)$ as discussed in the previous section. Recall that the induced representation $\mathrm{Ind}^{A_{n+1}}_{B_n} (V^{\otimes n}\otimes W)$ can be written as the direct sum $\bigoplus_{i=1}^{n+1} \alpha_{i,n+1} (V^{\otimes n}\otimes W)$ where each $\alpha_{i,n+1} (V^{\otimes n}\otimes W)$ is isomorphic to $V^{\otimes n}\otimes W$. For all $1 \le i \le n+1$, since the underlying permutation of $\alpha_{i,n+1}$ sends $n+1$ to $i$, it is natural to identify $\alpha_{i,n+1} (V^{\otimes n}\otimes W)$ with $V^{\otimes i-1} \otimes W \otimes V^{\otimes n-i+1}$ (as vector spaces), where $W$ is the $i^\mathrm{th}$ tensor factor, via an isomorphism $\xi_{i,n+1} : \alpha_{i,n+1} (V^{\otimes n}\otimes W) \xrightarrow{\cong} V^{\otimes i-1} \otimes W \otimes V^{\otimes n-i+1}$. This yields the following identification:

\begin{prop}\label{prop:ind_rep_lbvs}
There is an isomorphism of vector spaces
\[ \mathrm{Ind}^{A_{n+1}}_{B_n} (V^{\otimes n}\otimes W) \cong \displaystyle \bigoplus_{i=1}^{n+1} V^{\otimes i-1} \otimes W \otimes V^{\otimes n-i+1}.\]
Moreover, given a choice of isomorphism $\xi_{i,n+1} : \alpha_{i,n+1} (V^{\otimes n}\otimes W) \to V^{\otimes i-1} \otimes W \otimes V^{\otimes n-i+1}$ for all $1 \le i \le n+1$, there is an $A_{n+1}$-action on $\bigoplus_{i=1}^{n+1} V^{\otimes i-1} \otimes W \otimes V^{\otimes n-i+1}$ defined by $a \mapsto \xi_{\underline{a}(i)} a \xi_i^{-1}$, such that the above is an isomorphism of $A_{n+1}$-representations.
\end{prop}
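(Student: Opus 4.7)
The plan is to derive both claims as direct consequences of the coset description given in the proposition preceding Proposition~\ref{prop:act_ind_rep}, together with the explicit action formula in Proposition~\ref{prop:act_ind_rep}, and to then transport structure along the chosen isomorphisms.

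First I would establish the vector space isomorphism. For any subgroup $H \le G$ and any $H$-module $L$, the induced module $\mathrm{Ind}^G_H(L)$ carries a canonical vector space decomposition $\bigoplus_i g_i \otimes L$ over any choice of left coset representatives $\{g_i\}$ of $H$ in $G$. Taking $g_i = \alpha_{i,n+1}$ for $1 \le i \le n+1$ as in the preceding proposition yields
\[ \mathrm{Ind}^{A_{n+1}}_{B_n}(V^{\otimes n} \otimes W) \;\cong\; \bigoplus_{i=1}^{n+1} \alpha_{i,n+1}(V^{\otimes n} \otimes W) \]
as vector spaces, each summand being a copy of $V^{\otimes n} \otimes W$. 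Since $V^{\otimes i-1} \otimes W \otimes V^{\otimes n-i+1}$ has the same dimension, any $k$-linear isomorphism $\xi_{i,n+1}$ between the two identifies the $i^{\mathrm{th}}$ summand with the $i^{\mathrm{th}}$ factor in the asserted direct sum, so the first claim is essentially immediate.

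For the second statement, Proposition~\ref{prop:act_ind_rep} tells us that the action of $a \in A_{n+1}$ on $\mathrm{Ind}^{A_{n+1}}_{B_n}(V^{\otimes n}\otimes W)$ decomposes as a collection of component maps $a_i : \alpha_{i,n+1}(V^{\otimes n}\otimes W) \to \alpha_{\underline{a}(i),n+1}(V^{\otimes n}\otimes W)$ given explicitly by $\alpha_i \ell \mapsto \alpha_{\underline{a}(i)}\big[(\alpha_{\underline{a}(i)}^{-1} a \alpha_i)(\ell)\big]$. Transporting this decomposition along $\bigoplus_i \xi_{i,n+1}$ produces maps $\xi_{\underline{a}(i)} \circ a_i \circ \xi_i^{-1}$ from $V^{\otimes i-1} \otimes W \otimes V^{\otimes n-i+1}$ to $V^{\otimes \underline{a}(i)-1} \otimes W \otimes V^{\otimes n-\underline{a}(i)+1}$, which is precisely what the statement abbreviates as $\xi_{\underline{a}(i)} a \xi_i^{-1}$. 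That these maps assemble into an honest $A_{n+1}$-representation on $\bigoplus_{i=1}^{n+1} V^{\otimes i-1} \otimes W \otimes V^{\otimes n-i+1}$, and that $\bigoplus_i \xi_{i,n+1}$ is automatically equivariant for it, both follow tautologically from transport of structure: a linear isomorphism carries a representation to an isomorphic representation.

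The main obstacle, to the extent there is one, is purely notational---one must carefully track which summand the action lands in, which is encoded by the underlying permutation $\underline{a}$. Since Proposition~\ref{prop:act_ind_rep} already pins this down explicitly, the remainder of the argument is formal, and the freedom to choose each $\xi_{i,n+1}$ arbitrarily causes no additional difficulty since the conjugation in the action formula absorbs that choice.
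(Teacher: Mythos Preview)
Your argument is correct and matches the paper's approach: both recognize that once the action on the direct sum is \emph{defined} by conjugating through the $\xi_i$, the equivariance of $\bigoplus_i \xi_i$ is tautological. The paper's proof is a single sentence to this effect, while you have unpacked the same transport-of-structure argument in more detail.
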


\begin{proof}
The isomorphism of $A_{n+1}$-representations follows immediately from the definition of the action of $A_{n+1}$ on $\bigoplus_{i=1}^{n+1} V^{\otimes i-1} \otimes W \otimes V^{\otimes n-i+1}$.
\end{proof}

By convention, it is always assumed that $\xi_{n+1,n+1}$ is the identity map. The second index of the map $\xi_{i,n+1}$ again denotes the total degree of the domain and is omitted from the notation if there is no ambiguity. Observe that on the right hand side, we can apply braids that ``move'' the factor $W$, an operation that is forbidden in the $B_n$-representation on $V^{\otimes n} \otimes W$. This allows for a more intuitive framework to study the action of $A_{n+1}$ on the induced representation, analogous to its action in the monoidal braid representation on $V^{\otimes n+1}$.

As in the quantum shuffle algebra, we will also denote elements of $V^{\otimes i-1} \otimes W \otimes V^{\otimes n-i+1}$ by the bar complex notation, i.e. $[v_1|...|v_{i-1}| w| v_{i+1}| ...| v_{n+1}]$. In a specific case of composition of braid actions, if we apply the braid element $\sigma_n$ to an element in $\alpha_n(V^{\otimes n}\otimes W)$, we obtain
\[\sigma_n (\alpha_n [v_1|...|v_n|w]) = (\sigma_n^2) [v_1|...|v_n|w] = (\mathrm{id}^{\otimes n-1} \otimes \tau) [v_1|...|v_n|w].\]
Hence on the vector space $V^{\otimes n} \otimes W$, $\sigma_n$ acts as the ``left square root'' of the braid action of $\tau_n$. Formally, we may define maps $s_1 : V^{\otimes n} \otimes W \to V^{\otimes n-1} \otimes W\otimes V$ by
\[ [v_1|...|v_n|w] \mapsto \xi_n(\sigma_n[v_1|...|v_n|w]) \]
and $s_2 : V^{\otimes n-1} \otimes W\otimes V \to V^{\otimes n} \otimes W$ by
\[ [v_1|...|v_{n-1}|w|v_{n+1}] \mapsto \sigma_n (\xi_n^{-1}[v_1|...|v_{n-1}|w|v_{n+1}]) \]
that can be treated as ``left square roots'' of the braiding map $\mathrm{id}^{\otimes n-1} \otimes \tau$ in the $A_{n+1}$-representation on $\mathrm{Ind}^{A_{n+1}}_{B_n} (V^{\otimes n}\otimes W)$.

Observe that in general, the $A_{n+1}$-action on $\bigoplus_{i=1}^{n+1} V^{\otimes i-1} \otimes W \otimes V^{\otimes n-i+1}$, while slightly more intuitive than the induced representation, does not have an explicit formula that can be used in computations. This issue is more tractable when the representation on $\bigoplus_{i=1}^{n+1} V^{\otimes i-1} \otimes W \otimes V^{\otimes n-i+1}$ behaves analogously to that on $V^{\otimes n+1}$, in the sense that the $A_{q+1}$-action on a tensor subfactor $V^{\otimes i-1} \otimes W \otimes V^{q-i+1}$ of a summand $V^{\otimes i'-1} \otimes W \otimes V^{n-i'+1}$ agrees with the $A_{n+1}$-action on the entire summand for all $1 \le q \le n$ and $1 \le i \le q+1$. That is, we desire the following property: for any $a \in A_{q+1}$, the diagram

\begin{equation}\label{diag:separable}
\begin{tikzcd}
V^{\otimes p} \otimes (V^{\otimes i-1} \otimes W \otimes V^{q-i+1}) \otimes V^{\otimes n-p-q} \arrow[swap]{d}{\mathrm{id}^{\otimes p} \otimes \xi^{-1}_{i,q+1} \otimes \mathrm{id}^{\otimes n-p-q}} \arrow{r}{\xi^{-1}_{p+i,n+1}} &[0.7cm] \alpha_{p+i,n+1}(V^{\otimes n} \otimes W) \arrow{ddd}{a'} \\%
V^{\otimes p} \otimes \alpha_{i,q+1} (V^{\otimes q} \otimes W) \otimes V^{\otimes n-p-q} \arrow[swap]{d}{\mathrm{id}^{\otimes p} \otimes a \otimes \mathrm{id}^{\otimes n-p-q}} \\
V^{\otimes p} \otimes \alpha_{\underline{a}(i),q+1} (V^{\otimes q} \otimes W) \otimes V^{\otimes n-p-q} \arrow[swap]{d}{\mathrm{id}^{\otimes p} \otimes \xi_{\underline{a}(i),q+1} \otimes \mathrm{id}^{\otimes n-p-q}}\\
V^{\otimes p} \otimes (V^{\otimes \underline{a}(i)-1} \otimes W \otimes V^{q-\underline{a}(i)+1}) \otimes V^{\otimes n-p-q} & \arrow[swap]{l}{\xi_{\underline{a'}(p+i),n+1}} \alpha_{\underline{a'}(p+i),n+1}(V^{\otimes n} \otimes W)
\end{tikzcd}
\end{equation}
commutes, where the braid $a'$ is the natural inclusion of $a$ into the copy $A_{q+1} \le A_{n+1}$ consisting of braids that are only nontrivial on the $q+1$ strands starting with the $p+1^{\mathrm{st}}$. The following proposition gives criteria to detect this property.

\begin{prop}\label{prop:comm_diag_cond}
Let $\varphi_{i,n} : V^{\otimes n-1} \otimes W \to V^{\otimes i-1} \otimes W \otimes V^{\otimes n-i}$ be defined by $\varphi_{i,n}:= \xi_{i,n}\alpha_{i,n}$ for all $n \ge 1$ and $1 \le i \le n$, and in particular denote $\varphi := \varphi_{1,2} = \xi_{1,2}\alpha_{1,2}$. Then Diagram~\ref{diag:separable} always commutes if and only if 
\[\text{(1) } \varphi_{i,n} = (\mathrm{id}^{\otimes i-1} \otimes \varphi \otimes \mathrm{id}^{\otimes n-i-1}) \circ (\mathrm{id}^{\otimes i} \otimes \varphi \otimes \mathrm{id}^{\otimes n-i-2}) \circ \cdots \circ (\mathrm{id}^{\otimes n-2} \otimes \varphi)\]
and the following identities hold:
\begin{enumerate}\addtocounter{enumi}{1}
    \item $(\mathrm{id} \otimes \sigma) \circ (\varphi \otimes \mathrm{id}) \circ (\mathrm{id} \otimes \varphi) = (\varphi \otimes \mathrm{id}) \circ (\mathrm{id} \otimes \varphi) \circ (\sigma \otimes \mathrm{id})$;
    \item $(\tau \otimes \mathrm{id}) \circ (\mathrm{id} \otimes \varphi) = (\mathrm{id} \otimes \varphi) \circ (\sigma \otimes \mathrm{id}) \circ (\mathrm{id} \otimes \tau) \circ (\sigma^{-1} \otimes \mathrm{id})$.
\end{enumerate}
\end{prop}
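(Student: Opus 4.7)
The plan is to reduce the commutativity of Diagram~\ref{diag:separable} to a verification on a generating set of $A_{q+1}$, namely the elementary braid generators $\sigma_1,\ldots,\sigma_q$. Since braid actions compose, and since a pair of commutative squares glued along an edge yields a commutative rectangle, the diagram for a product $a_1a_2$ commutes whenever the diagrams for $a_1$ and $a_2$ do; it therefore suffices to verify commutativity when $a$ ranges over the generators. The case analysis leans heavily on the explicit formulas from the Corollary to Proposition~\ref{prop:act_ind_rep}, which split the action of each generator $\sigma_m$ on each summand $\alpha_i(V^{\otimes n}\otimes W)$ into four cases based on the relative position of $m$ and $i$.

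For the forward direction, I would extract each of the three conditions by specializing the parameters $a,p,q,i,n$ in the diagram. Condition~(1) is obtained by choosing $a=\sigma_{i-1}\in A_{q+1}$; by the Corollary, $\sigma_{i-1}(\alpha_{i,q+1}\ell)=\alpha_{i-1,q+1}\ell$, and the commutativity of the diagram with $p=0$ and $q=n-1$ then yields the recursion $\varphi_{i-1,n}=(\mathrm{id}^{\otimes i-2}\otimes\varphi\otimes\mathrm{id}^{\otimes n-i})\circ\varphi_{i,n}$, which telescopes to condition~(1) upon induction starting from the convention $\varphi_{n,n}=\mathrm{id}$. Condition~(3) is extracted from the $m=i$ case $\sigma_i(\alpha_i\ell)=\alpha_{i+1}[(\alpha_i\tau_n\alpha_i^{-1})(\ell)]$, specialized to parameters small enough that the conjugation $\alpha_i\tau_n\alpha_i^{-1}=\sigma_i\cdots\sigma_{n-1}\tau_n\sigma_{n-1}^{-1}\cdots\sigma_i^{-1}$ acts on $V\otimes V\otimes W$ as $(\sigma\otimes\mathrm{id})\circ(\mathrm{id}\otimes\tau)\circ(\sigma^{-1}\otimes\mathrm{id})$; the identity in~(3) then drops out of the diagram. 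Condition~(2), a naturality-type identity between $\sigma$ and $\varphi$, is extracted by applying the diagram to a generator $\sigma_j$ whose action on two adjacent $V$-factors must be compared against the movement of $W$ past those factors via $\varphi$.

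For the backward direction, I would verify the diagram for each generator $\sigma_j$ of $A_{q+1}$ directly. The cases $j\le i-2$ and $j\ge i+1$ reduce, via the Corollary, to pure $\sigma$-actions on adjacent $V$-factors on one side of $W$; compatibility with the $\xi$-identifications follows from condition~(2), iteratively applied via condition~(1) when several $\varphi$'s separate the $\sigma$ from its intended position. The case $j=i-1$ is precisely condition~(1) at the appropriate indices, since both paths implement a single $\varphi$-shift of $W$ one position to the left. The case $j=i$ is handled by expanding $\sigma_i(\alpha_i\ell)=\alpha_{i+1}[(\alpha_i\tau_n\alpha_i^{-1})(\ell)]$, using condition~(1) to rewrite $\varphi_{i,n}$ and $\varphi_{i+1,n}$ as iterated $\varphi$'s, condition~(2) to commute $\sigma$'s past individual $\varphi$'s, and finally condition~(3) to collapse the $\tau$-action onto the innermost factors.

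The main obstacle I anticipate is the case $j=i$ of the backward direction, where the ``square root'' phenomenon --- $\tau_n=\sigma_n^2$ with $\sigma_n\notin B_n$ --- manifests as the nontrivial conjugation $\alpha_i\tau_n\alpha_i^{-1}$. Verifying the diagram here requires a careful inductive unpacking of this element into basic pieces involving only $\sigma$, $\tau$, and $\varphi$, interleaving the three conditions with the braid relations in $A_{n+1}$. Condition~(3) supplies the core identity at the innermost step, while conditions~(1) and~(2) provide the coherence needed to transport that identity to arbitrary positions within the tensor product.
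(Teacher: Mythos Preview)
Your plan is essentially the paper's: reduce to braid generators, invoke the four-case Corollary to Proposition~\ref{prop:act_ind_rep}, and match each condition to the appropriate generator case. The paper inserts an intermediate layer of conditions (a)--(c) on the whole family $\{\varphi_{i,n}\}$ before distilling to (1)--(3), which makes the bookkeeping cleaner, but the underlying argument and the inductions you outline (notably the backward induction for the $j=i$ case) are the same.

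One small fix: your extraction of condition~(1) from the specialization $p=0$, $q=n-1$, $a=\sigma_{i-1}$ does not produce the recursion you state. With those parameters the diagram equates $(\varphi_{i-1,n}\varphi_{i,n}^{-1})\otimes\mathrm{id}_V$ with $\varphi_{i-1,n+1}\varphi_{i,n+1}^{-1}$, a cross-$n$ relation rather than the single-step recursion. To obtain $\varphi_{i-1,n}=(\mathrm{id}^{\otimes i-2}\otimes\varphi\otimes\mathrm{id}^{\otimes n-i})\circ\varphi_{i,n}$ directly, take instead $q=1$, $i=2$, $a=\sigma_1\in A_2$, and vary $p$; then the left path collapses to a single $\mathrm{id}^{\otimes p}\otimes\varphi\otimes\mathrm{id}^{\otimes n-p-1}$ and commutativity gives exactly the recursion. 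This is a minor correction and does not affect the validity of your overall strategy.
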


\begin{proof}
Since every braid action is decomposable into those of the generators, it suffices to study the commutativity of Diagram~\ref{diag:separable} for all braid generators. By a straightforward yet arduous reduction, we can show that Diagram~\ref{diag:separable} commutes for all braid generators if and only if the following identities hold:
\begin{enumerate}[label=(\alph*)]
    \item $(\mathrm{id}^{\otimes p} \otimes \varphi_{i,q} \otimes \mathrm{id}^{\otimes n-p-q}) \varphi_{p+q,n} = \varphi_{p+i,n}$;
    
    \vspace{.05in}
    \item $\sigma_m \varphi_{i,n} = \begin{cases}
    \varphi_{i,n} \sigma_m \qquad \text{if } m \le i-2\\
    \varphi_{i,n} \sigma_{m-1} \hfill \text{if } m \ge i+1
    \end{cases}$
    for $\sigma_m = \mathrm{id}^{\otimes m-1} \otimes \sigma \otimes \mathrm{id}^{\otimes n-m-1}$;
    
    \item $\tau_{i-1} \varphi_{i,n} = \varphi_{i,n} (\sigma_{i-1} ... \sigma_{n-2} \tau_{n-1} \sigma_{n-2}^{-1} ... \sigma_{i-1}^{-1})$, for $\tau_{j} = \mathrm{id}^{\otimes j-1} \otimes \tau \otimes \mathrm{id}^{\otimes n-j-1}$.
\end{enumerate}
We will show that these conditions are equivalent to equations (1-3).

First, we will prove that formula (1) is equivalent to condition (a). It is easy to see that the formula (1) for $\varphi_{i,n}$ satisfies condition (a). Conversely, given condition (a) and a choice of $\varphi = \varphi_{1,2}$, we will prove formula (1) by induction on $n$. The base case $n = 3$ can be checked directly:
\[\varphi_{2,3} = (\mathrm{id} \otimes \varphi_{1,2})\varphi_{3,3} = \mathrm{id} \otimes \varphi\]
and
\[\varphi_{1,3} = (\varphi_{1,2} \otimes \mathrm{id})\varphi_{2,3} = (\varphi \otimes \mathrm{id}) \circ (\mathrm{id} \otimes \varphi).\]
Suppose the formula holds for $n$. Consider $\varphi_{i,n+1}$. For all $1 \le i \le n$, we have
\[ \begin{array}{l}
    \varphi_{i+1,n+1} = (\mathrm{id} \otimes \varphi_{i,n}) \varphi_{n+1,n+1} \\[5pt]
     = \mathrm{id} \otimes \left[ (\mathrm{id}^{\otimes i-1} \otimes \varphi \otimes \mathrm{id}^{\otimes n-i-1}) \circ (\mathrm{id}^{\otimes i} \otimes \varphi \otimes \mathrm{id}^{\otimes n-i-2}) \circ \cdots \circ (\mathrm{id}^{\otimes n-2} \otimes \varphi) \right]\\[5pt]
     = (\mathrm{id}^{\otimes i} \otimes \varphi \otimes \mathrm{id}^{\otimes n-i-1}) \circ (\mathrm{id}^{\otimes i+1} \otimes \varphi \otimes \mathrm{id}^{\otimes n-i-2}) \circ \cdots \circ (\mathrm{id}^{\otimes n-1} \otimes \varphi).
\end{array} \]
Finally, apply the above formula for $\varphi_{2,n+1}$ to $\varphi_{1,n+1} = (\varphi_{1,2} \otimes \mathrm{id}^{\otimes n-1})\varphi_{2,n+1}$ to complete the remaining case. With this identification, observe that condition (b) gives $\sigma_2 \varphi_{1,3} = \varphi_{1,3}\sigma_1$, which is identity (2), and condition (c) gives $\tau_1 \varphi_{2,3} = \varphi_{2,3}(\sigma_1 \tau_2 \sigma_1^{-1})$, which is identity (3). So the forward direction of the statement holds.

Conversely, assume formulae (1-3). Let $\varphi_i := \mathrm{id}^{\otimes i-1} \otimes \varphi \otimes \mathrm{id}^{\otimes n-i-1}$, then we may rewrite these formulae as
\begin{enumerate}
    \item $\varphi_{i,n} = \varphi_i \varphi_{i+1} ... \varphi_{n-1}$;
    \item $\sigma_{i+1}\varphi_i \varphi_{i+1} = \varphi_i \varphi_{i+1} \sigma_i$;
    \item $\tau_i \varphi_{i+1} = \varphi_{i+1} \sigma_i \tau_{i+1} \sigma_i^{-1}$.
\end{enumerate}
In addition to these identities, note that $\sigma_m \varphi_i = \varphi_i \sigma_m$ whenever $|m-i| \ge 2$. For condition (b), observe that if $m \le i-2$, $\sigma_m \varphi_{i,n} = \sigma_m \varphi_i ... \varphi_{n-1} = \varphi_i ... \varphi_{n-1} \sigma_m = \varphi_{i,n} \sigma_m$, since $\sigma_m$ commutes with each $\varphi_j$. If $m \ge i+1$, by applying (1) and (2), we have $\sigma_m \varphi_{i,n} = \varphi_i ... \varphi_{m-2} \sigma_m \varphi_{m-1} \varphi_m ... \varphi_{n-1} = \varphi_i ... \varphi_{m-1} \varphi_m \sigma_{m-1} \varphi_{m+1} ... \varphi_{n-1} = \varphi_i ... \varphi_{n-1} \sigma_{m-1}= \varphi_{i,n} \sigma_{m-1}$, so condition (b) is satisfied. We will prove condition (c) by backward induction on $i$ for a fixed $n$. The base case $i=n-1$ is precisely formula (3). Suppose that condition (c) holds for $i$. We then have
\[\begin{array}{r l}
\tau_{i-2}\varphi_{i-1,n} & = \tau_{i-2} \varphi_{i-1} \varphi_{i,n} = \varphi_{i-1} \sigma_{i-2} \tau_{i-1} \sigma_{i-2}^{-1} \varphi_{i,n} = \varphi_{i-1} \sigma_{i-2} \tau_{i-1} \varphi_{i,n} \sigma_{i-2}^{-1} \\[5pt]
& = \varphi_{i-1} \sigma_{i-2} \varphi_{i,n} (\sigma_{i-1} ... \sigma_{n-2} \tau_{n-1} \sigma_{n-2}^{-1} ... \sigma_{i-1}^{-1}) \sigma_{i-2}^{-1}\\[5pt]
& = \varphi_{i-1} \varphi_{i,n} (\sigma_{i-2} \sigma_{i-1} ... \sigma_{n-2} \tau_{i-1} \sigma_{n-2}^{-1} ... \sigma_{i-1}^{-1} \sigma_{i-2}^{-1})\\[5pt]
& = \varphi_{i-1,n} (\sigma_{i-2} ... \sigma_{n-2} \tau_{i-1} \sigma_{n-2}^{-1} ... \sigma_{i-2}^{-1}). 
\end{array}\]
This concludes our induction.
\end{proof}

It follows that our desired property for the action of $A_{n+1}$ on $\bigoplus_{i=1}^{n+1} V^{\otimes i-1} \otimes W \otimes V^{\otimes n-i+1}$ can be detected by the existence of an isomorphism $\varphi: V \otimes W \to W \otimes V$ satisfying identities (2) and (3) in the proposition above. In principle, the choices of maps $\varphi$ and $\xi_{1,2}: \alpha_{1,2}(V\otimes W) \to W \otimes V$ are equivalent via the relation $\varphi = \xi_{1,2}\alpha_{1,2}$; in practice however, given an explicit left-braided vector space, it is often more convenient to construct a map $\varphi$, due to the fact that $\alpha_{1,2}(V \otimes W)$ is abstract. Observe that these identities directly involve the braiding maps $\sigma$ and $\tau$ of the left-braided vector space $(V,W)$, while $\varphi : V \otimes W \to W \otimes V$ plays the role of the ``left square root'' of $\tau$. These criteria therefore are strictly internal to the structure of left-braided vector spaces, as encapsulated in the following definition:

\begin{defn}\label{defn:sep_lbvs}
    A left-braided vector space $(V,W,\sigma,\tau)$ is \textit{separable} if there exists an isomorphism $\varphi : V \otimes W \to W \otimes V$ (called the \textit{separated braiding}) that satisfies the following braid equations on $V^{\otimes 2} \otimes W$:
    \begin{enumerate}
    \item $(\mathrm{id} \otimes \sigma) \circ (\varphi \otimes \mathrm{id}) \circ (\mathrm{id} \otimes \varphi) = (\varphi \otimes \mathrm{id}) \circ (\mathrm{id} \otimes \varphi) \circ (\sigma \otimes \mathrm{id})$;
    \item $(\tau \otimes \mathrm{id}) \circ (\mathrm{id} \otimes \varphi) = (\mathrm{id} \otimes \varphi) \circ (\sigma \otimes \mathrm{id}) \circ (\mathrm{id} \otimes \tau) \circ (\sigma^{-1} \otimes \mathrm{id})$.
    \end{enumerate}
    A separable left-braided vector space $(V,W,\sigma,\tau)$ with the choice of separated braiding $\varphi$ is denoted by $(V,W,\sigma,\tau,\varphi)$.
\end{defn}

\begin{exmp}
When $V=W=k$, the left-braided vector space in Example~\ref{exmp:V=W=k} is separable with the separated braiding $\varphi$ given simply by permutation of tensor factors.

Generally, this choice of $\varphi$ does not satisfy condition (2). One such example is $(V,V,\sigma,\sigma^2)$ for a given braided vector space $(V,\sigma)$ (see Example~\ref{exmp:V=bvs}). In this case, however, the left-braided vector space is separable with the obvious choice of separated braiding $\varphi := \sigma$.
\end{exmp}

Separability of a left-braided vector space $(V,W)$ is integrally connected to the existence of a braid structure on the direct sum $V \oplus W$.

\begin{prop}\label{prop:sep_lbvs_iff_V+W=bvs}
Let $V$ and $W$ be finite dimensional $k$-vector spaces, and let $X = V \oplus W$. Suppose there is an automorphism $\sigma_X$ of $(V \oplus W)^{\otimes 2} \cong V^{\otimes 2} \oplus (V \otimes W) \oplus (W \otimes V) \oplus W^{\otimes 2}$ defined summand-wise by isomorphisms $\sigma_V: V^{\otimes 2} \to V^{\otimes 2}$, $\varphi: V \otimes W \to W \otimes V$, $\psi: W \otimes V \to V \otimes W$, and $\sigma_W: W^{\otimes 2} \to W^{\otimes 2}$.
\begin{enumerate}
    \item If $X$ is a braided vector space, then $(V,W,\sigma_V,\tau,\varphi)$ is a separable left-braided vector space where $\tau = \psi\varphi$;
    
    \item A weak version of the converse holds: if the assumption on $\sigma_X$ is relaxed by setting $\sigma_W = 0$ (in particular, $\sigma_X$ is no longer an isomorphism) and $(V,W,\sigma_V,\tau,\varphi)$ is a separable left-braided vector space, then $X$ is a lax braided vector space, i.e. the map $\sigma_X$ satisfies the braid equation on $X^{\otimes 3}$:
    \[(\sigma_X \otimes \mathrm{id}) \circ (\mathrm{id} \otimes \sigma_X) \circ (\sigma_X \otimes \mathrm{id}) = (\mathrm{id} \otimes \sigma_X) \circ (\sigma_X \otimes \mathrm{id}) \circ (\mathrm{id} \otimes \sigma_X).\]
\end{enumerate}
\end{prop}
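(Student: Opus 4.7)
The plan is to exploit the direct-sum decomposition
\[ X^{\otimes 3} = \bigoplus_{(X_1, X_2, X_3) \in \{V, W\}^3} X_1 \otimes X_2 \otimes X_3 \]
together with the observation that $\sigma_X$ sends each of the pairs $VV$, $VW$, $WV$, $WW$ into a specific summand of $X^{\otimes 2}$ via $\sigma_V$, $\varphi$, $\psi$, $\sigma_W$ respectively. Consequently $\sigma_X \otimes \mathrm{id}$ and $\mathrm{id} \otimes \sigma_X$ respect the eight-fold decomposition of $X^{\otimes 3}$, and the braid equation for $\sigma_X$ is equivalent to a family of eight summand-level equations.

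For part~(1), I would restrict the assumed braid equation to each summand and read off the conditions. The $V^{\otimes 3}$ summand yields the braid equation for $\sigma_V$. The $V^{\otimes 2} \otimes W$ summand gives precisely condition~(1) of Definition~\ref{defn:sep_lbvs}. The $V \otimes W \otimes V$ summand produces the auxiliary identity $(E)$:
\[ (\psi \otimes \mathrm{id}) \circ (\mathrm{id} \otimes \sigma_V) \circ (\varphi \otimes \mathrm{id}) = (\mathrm{id} \otimes \varphi) \circ (\sigma_V \otimes \mathrm{id}) \circ (\mathrm{id} \otimes \psi), \]
while the $W \otimes V \otimes V$ summand gives a braid-type equation for $\psi$. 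Substituting $\tau = \psi\varphi$ shows that condition~(2) of Definition~\ref{defn:sep_lbvs} is equivalent (modulo condition~(1)) to $(E)$, so $(E)$ combined with condition~(1) delivers separability. The braid equation for $\tau$ on $V^{\otimes 2} \otimes W$---required for $(V, W)$ to form a left-braided vector space---follows by expanding $\tau = \psi\varphi$ in the $WVV$ equation and applying $(E)$ and condition~(1).

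For part~(2), I would reverse this analysis: set $\psi := \tau \circ \varphi^{-1}$ (well-defined since $\varphi$ is an isomorphism) and verify the braid equation on each summand. The four summands containing two consecutive $W$'s---namely $V \otimes W \otimes W$, $W \otimes V \otimes W$, $W \otimes W \otimes V$, and $W^{\otimes 3}$---trivialize, since $\sigma_W = 0$ forces both sides to vanish. The $VVV$ and $VVW$ cases hold by the braid equation for $\sigma_V$ and condition~(1) of separability, respectively. The $VWV$ case reduces to $(E)$, which is extracted from condition~(2) by substituting $\tau = \psi\varphi$ and applying condition~(1). The delicate step is the $WVV$ case: I would expand the braid equation for $\tau$ (available via the left-braided axiom) under $\tau = \psi\varphi$, apply $(E)$ on both sides, use condition~(1) to produce a common $(\varphi \otimes \mathrm{id})(\mathrm{id} \otimes \varphi)$ factor, and finally cancel it by invertibility of $\varphi$ to isolate the braid equation for $\psi$. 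This last case is the main obstacle, since the separability axioms are essentially one-sided (they involve $\varphi$ directly but $\psi$ only implicitly through $\tau$), and extracting a relation in $\psi$ alone forces a careful sequence of rewrites culminating in a cancellation that depends crucially on invertibility of $\varphi$. Everything else is a routine summand-by-summand verification.
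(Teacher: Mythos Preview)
Your proposal is correct and follows essentially the same approach as the paper: both reduce the braid equation on $X^{\otimes 3}$ to eight summand equations (the paper labels the four relevant ones (a)--(d), matching your $VVV$, $VVW$, $VWV$, $WVV$ cases) and verify the equivalences with the left-braided and separability axioms; the paper only spells out part~(1) and declares part~(2) ``very similar,'' whereas you sketch the reverse direction explicitly. One minor slip: $W\otimes V\otimes W$ does not literally contain two \emph{consecutive} $W$'s, but your conclusion that both sides vanish there once $\sigma_W=0$ is still correct, since after one step two $W$'s become adjacent.
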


\begin{proof}
This proof rests on the following key observation: $X$ is a (lax) braided vector space if and only if the braid equation holds on each of the eight summands of $(V \oplus W)^{\otimes 3}$, e.g.,
\begin{enumerate}[label=(\alph*)]
    \item $(\sigma_V \otimes \mathrm{id}) \circ (\mathrm{id} \otimes \sigma_V) \circ (\sigma_V \otimes \mathrm{id}) = (\mathrm{id} \otimes \sigma_V) \circ (\sigma_V \otimes \mathrm{id}) \circ (\mathrm{id} \otimes \sigma_V)$;
    \item $(\varphi \otimes \mathrm{id}) \circ (\mathrm{id} \otimes \varphi) \circ (\sigma_V \otimes \mathrm{id}) = (\mathrm{id} \otimes \sigma_V) \circ (\varphi \otimes \mathrm{id}) \circ (\mathrm{id} \otimes \varphi)$;
    \item $(\psi \otimes \mathrm{id}) \circ (\mathrm{id} \otimes \sigma_V) \circ (\varphi \otimes \mathrm{id}) = (\mathrm{id} \otimes \varphi) \circ (\sigma_V \otimes \mathrm{id}) \circ (\mathrm{id} \otimes \psi)$;
    \item $(\sigma_V \otimes \mathrm{id}) \circ (\mathrm{id} \otimes \psi) \circ (\psi \otimes \mathrm{id}) = (\mathrm{id} \otimes \psi) \circ (\psi \otimes \mathrm{id}) \circ (\mathrm{id} \otimes \sigma_V)$;
\end{enumerate}
and four others involving $\sigma_W$. When $\sigma_W = 0$, the latter four equations are automatically satisfied, so it suffices to show that conditions (a-d) are the necessary and sufficient conditions for the braid structure and separability of $(V,W,\sigma_V,\tau,\varphi)$. The arguments for both directions are very similar; here we will only show the proof of part (1) of the statement. For the rest of this proof, we will use the notation $\sigma_1 := \sigma_V \otimes \mathrm{id}$ and $\sigma_2 := \mathrm{id} \otimes \sigma_V$, and analogous notations for $\tau$, $\varphi$, and $\psi$.

Assume conditions (a-d). First we show that $(V,W,\sigma_V,\tau)$ is a left-braided vector space where $\tau = \psi\varphi$. Condition (a) implies that $(V,\sigma_V)$ is a braided vector space. For the additional braid equation, we have
\[ \begin{array}{r l}
     \tau_2 \sigma_1 \tau_2 \sigma_1 & = \psi_2 (\varphi_2 \sigma_1 \psi_2) \varphi_2 \sigma_1 = (\psi_2 \psi_1 \sigma_2)(\varphi_1 \varphi_2 \sigma_1) \\
     & = \sigma_1 \psi_2 (\psi_1 \sigma_2 \varphi_1) \varphi_2 = \sigma_1 (\psi_2 \varphi_2) \sigma_1 (\psi_2 \varphi_2) = \sigma_1 \tau_2 \sigma_1 \tau_2.
\end{array} \]
So indeed $(V,W,\sigma_V,\tau)$ forms a left-braided vector space. Condition (b) is the same as (1) in Definition~\ref{defn:sep_lbvs}, while for condition (2) we have:
\[ \tau_1 \varphi_2 = \psi_1 (\varphi_1 \varphi_2 \sigma_1) \sigma_1^{-1} = (\psi_1 \sigma_2 \varphi_1) \varphi_2 \sigma_1^{-1} = \varphi_2 \sigma_1 \psi_2 \varphi_2 \sigma_1^{-1} = \varphi_2 \sigma_1 \tau_2 \sigma_1^{-1}.\]
Therefore $(V,W,\sigma_V,\tau)$ is separable with the separated braiding $\varphi$.
\end{proof}

Roughly speaking, Proposition~\ref{prop:sep_lbvs_iff_V+W=bvs} states that separability of a left-braided vector space $(V,W)$ is equivalent to half of the data of a braid structure on $V \oplus W$. This fact again exhibits an asymmetry in the structure of left-braided vector spaces, which stems from the nature of the Artin groups of type \textit{B}. Finally, when $(V,W)$ is a separable left-braided vector space, Propositions~\ref{prop:comm_diag_cond} and ~\ref{prop:sep_lbvs_iff_V+W=bvs} together imply that there is a simple description of the action of $A_{n+1}$ in the braid representation on $\bigoplus_{i=1}^{n+1} V^{\otimes i-1} \otimes W \otimes V^{\otimes n-i+1}$ defined in Proposition~\ref{prop:ind_rep_lbvs}:

\begin{cor}\label{cor:ind_rep_sep_lbvs}
Let $(V,W,\sigma,\tau,\varphi)$ be a separable left-braided vector space. Then there is an isomorphism of $A_{n+1}$-representations
\[ \mathrm{Ind}^{A_{n+1}}_{B_n} (V^{\otimes n}\otimes W) \cong \displaystyle \bigoplus_{i=1}^{n+1} V^{\otimes i-1} \otimes W \otimes V^{\otimes n-i+1}\]
where the action of $A_{n+1}$ on $\bigoplus_{i=1}^{n+1} V^{\otimes i-1} \otimes W \otimes V^{\otimes n-i+1}$ is defined by
\[ \sigma_m[v_1|...|w_i|...|v_{n+1}] = \begin{cases}
(\mathrm{id}^{\otimes m-1} \otimes \sigma \otimes \mathrm{id}^{\otimes n-m})[v_1|...|w_i|...|v_{n+1}] \hspace{.3in} \hfill m \ne i-1, i \\
(\mathrm{id}^{\otimes i-2} \otimes \varphi \otimes \mathrm{id}^{\otimes n-i+1})[v_1|...|w_i|...|v_{n+1}] \hfill m = i-1 \\
(\mathrm{id}^{\otimes i-1} \otimes \tau\varphi^{-1} \otimes \mathrm{id}^{\otimes n-i})[v_1|...|w_i|...|v_{n+1}] \hfill m = i.
\end{cases}
\]
\end{cor}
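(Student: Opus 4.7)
The strategy is to combine the vector-space identification of Proposition \ref{prop:ind_rep_lbvs} with the compatibility guaranteed by Proposition \ref{prop:comm_diag_cond}. Since $(V,W,\sigma,\tau,\varphi)$ is separable, the two identities in Definition \ref{defn:sep_lbvs} are precisely identities (2) and (3) of Proposition \ref{prop:comm_diag_cond}, while declaring $\varphi_{i,n+1} := \xi_{i,n+1}\alpha_{i,n+1}$ to satisfy the recursion (1) of that proposition is consistent with the base choice $\varphi = \varphi_{1,2}$. Hence Diagram \ref{diag:separable} commutes for every braid $a$, so the transported operator $\xi_{\underline{a}(i),n+1}\, a\, \xi_{i,n+1}^{-1}$ decomposes into iterated applications of $\sigma$, $\varphi$, and $\tau$ at the correct tensor slots.

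It then suffices to verify the displayed formulae for each generator $\sigma_m$ applied to a summand $V^{\otimes i-1}\otimes W\otimes V^{\otimes n-i+1}$, using the case analysis in the corollary immediately after Proposition \ref{prop:act_ind_rep}, which expresses $\sigma_m(\alpha_i\ell)$ in terms of the $B_n$-action on $\ell$. When $m \le i-2$ or $m \ge i+1$, that corollary gives $\sigma_m(\alpha_i\ell) = \alpha_i[\sigma_{m'}(\ell)]$ with $m' = m$ or $m-1$ respectively; applying condition (b) in the proof of Proposition \ref{prop:comm_diag_cond} (itself a direct consequence of identity (1) of Definition \ref{defn:sep_lbvs}), one concludes that $\xi_{i,n+1}\sigma_{m'}\xi_{i,n+1}^{-1}$ is precisely $\mathrm{id}^{\otimes m-1}\otimes\sigma\otimes\mathrm{id}^{\otimes n-m}$, matching the first case of the formula.

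For $m = i-1$, the same corollary gives $\sigma_{i-1}(\alpha_i\ell) = \alpha_{i-1}\ell$, so the transported operator $\xi_{i-1,n+1}\xi_{i,n+1}^{-1} = \varphi_{i-1,n+1}\varphi_{i,n+1}^{-1}$ collapses via the recursion $\varphi_{i-1,n+1} = (\mathrm{id}^{\otimes i-2}\otimes\varphi\otimes\mathrm{id}^{\otimes n-i+1})\circ\varphi_{i,n+1}$ to a single $\varphi$ at position $i-1$. For $m = i$, we have $\sigma_i(\alpha_i\ell) = \alpha_{i+1}[(\alpha_i\tau_n\alpha_i^{-1})\ell]$, where the conjugate expands as $\sigma_i\cdots\sigma_{n-1}\tau_n\sigma_{n-1}^{-1}\cdots\sigma_i^{-1}$. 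Applying condition (c) in the proof of Proposition \ref{prop:comm_diag_cond}---the statement $\tau_i\,\varphi_{i+1,n+1} = \varphi_{i+1,n+1}(\sigma_i\cdots\sigma_{n-1}\tau_n\sigma_{n-1}^{-1}\cdots\sigma_i^{-1})$, a consequence of identity (2) of Definition \ref{defn:sep_lbvs}---reduces the transported operator to $\tau_i\,\varphi_{i+1,n+1}\varphi_{i,n+1}^{-1}$, which by the recursion equals $(\mathrm{id}^{\otimes i-1}\otimes\tau\otimes\mathrm{id}^{\otimes n-i})\circ(\mathrm{id}^{\otimes i-1}\otimes\varphi^{-1}\otimes\mathrm{id}^{\otimes n-i}) = \mathrm{id}^{\otimes i-1}\otimes(\tau\varphi^{-1})\otimes\mathrm{id}^{\otimes n-i}$.

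The main obstacle is the bookkeeping in the $m = i$ case: one must carefully invoke condition (c) from the proof of Proposition \ref{prop:comm_diag_cond} (itself established by a backward induction using identity (2) of Definition \ref{defn:sep_lbvs}), track the positions of the long string of conjugating $\sigma$'s as they are peeled back through the successive $\varphi$ factors in $\varphi_{i+1,n+1}$, and confirm that the final operator assembles as $\tau\varphi^{-1}$ rather than any other composition of $\tau$ and $\varphi^{\pm 1}$. The remaining three cases follow essentially immediately from the above, and together exhaust the action of all generators of $A_{n+1}$.
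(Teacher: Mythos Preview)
Your proof is correct and follows essentially the same route as the paper: the paper states the corollary as a direct consequence of Propositions~\ref{prop:comm_diag_cond} and~\ref{prop:sep_lbvs_iff_V+W=bvs} without further detail, and your argument unpacks this by running the case analysis of the corollary following Proposition~\ref{prop:act_ind_rep} through the conditions (b) and (c) established inside the proof of Proposition~\ref{prop:comm_diag_cond}. The only difference is that you do not invoke Proposition~\ref{prop:sep_lbvs_iff_V+W=bvs}; the paper's reference to it is best read as the observation that the three operators $\sigma$, $\varphi$, $\tau\varphi^{-1}$ appearing in the formula are exactly the nonzero components of the lax braiding $\sigma_X$ on $X = V\oplus W$, which gives a conceptual reason for the shape of the answer but is not strictly needed once you have computed the transported operators directly as you do. One small notational slip: writing the $m=i-1$ operator as ``$\xi_{i-1,n+1}\xi_{i,n+1}^{-1}$'' is not literally well-typed (the codomain of $\xi_{i,n+1}^{-1}$ is $\alpha_i(V^{\otimes n}\otimes W)$, not $\alpha_{i-1}(V^{\otimes n}\otimes W)$); what you mean, and correctly use, is $\varphi_{i-1,n+1}\varphi_{i,n+1}^{-1}$.
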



\section{Homology of type-\textit{B} Artin groups}\label{sec:main}

Let $(V,W)$ be a separable left-braided vector space. In this section, we will prove the main result of this paper, identifying
\[H_* (B_n; V^{\otimes n} \otimes W) \cong \mathrm{Ext}^{n-*, n+1}_{\mathfrak{A}^e} (\mathfrak{M},k)\]
where $\mathfrak{A}$ is a quantum shuffle algebra and $\mathfrak{M}$ is an $\mathfrak{A}$-bimodule that will be defined in Section~\ref{ssec:alg_req}. We will first recall a homological algebra statement from \cite{etw17} about the homology of braid groups with twisted coefficients, then develop suitable algebraic structures to apply this result in the proof of our main theorem.


\subsection{Fox-Neuwirth cellular stratification of Conf$_n (\mathbb{C})$}\label{ssec:fnf}

We recall the Fox-Neuwirth/Fuks stratification of Conf$_n(\mathbb{C})$ by Euclidean spaces, which provides a CW-complex structure for the 1-point compactification of Conf$_n (\mathbb{C})$. This construction was first demonstrated in \cite{fn62,fuk70} and further studied in \cite{vas92,gs12,etw17}; the treatment detailed here is taken primarily from \cite{etw17}.

A \textit{composition} $\lambda$ of $n$ is an ordered partition $\lambda = (\lambda_1, ..., \lambda_k)$ of $n$ where $\sum \lambda_i = n$. The number of parts $k$ is called the \textit{length} of $\lambda$, denoted by $l(\lambda)$. Recall that the $n^\mathrm{th}$ symmetric product Sym$_n(\mathbb{R})$ has a stratification given by these partitions Sym$_n (\mathbb{R}) = \coprod_{\lambda \vdash n} \text{Sym}_\lambda (\mathbb{R})$. Elements of Sym$_\lambda (\mathbb{R})$ are unordered subsets of $l(\lambda)$ distinct points $x_1, ..., x_{l(\lambda)}$, where the multiplicity of $x_i$ is $\lambda_i$. We further assume that $x_1 < ... < x_{l(\lambda)}$ where the ordering is that of the real line. Define a map $\pi : \text{Conf}_n(\mathbb{C}) \to \text{Sym}_n(\mathbb{R})$ by projecting each coordinate onto the real line, i.e. $\pi (z_1, ..., z_n) = (\text{Re}(z_1), ..., \text{Re}(z_n))$. The preimage of Sym$_\lambda(\mathbb{R})$ under $\pi$, Conf$_\lambda (\mathbb{C})$ is homeomorphic to $\text{Sym}_\lambda (\mathbb{R}) \times \prod^{l(\lambda)}_{i=1} \text{Conf}_{\lambda_i} (\mathbb{R})$, where each configuration factor Conf$_{\lambda_i} (\mathbb{R})$ keeps track of the imaginary parts of points with the same real part (see Figure~\ref{fig:fnf}). Since $\mathrm{Sym}_k(\mathbb{R}) \cong \mathrm{Conf}_k(\mathbb{R}) \cong \mathbb{R}^k$, that implies Conf$_\lambda(\mathbb{C}) \cong \mathbb{R}^{n+l(\lambda)}$.

\begin{figure}[t]
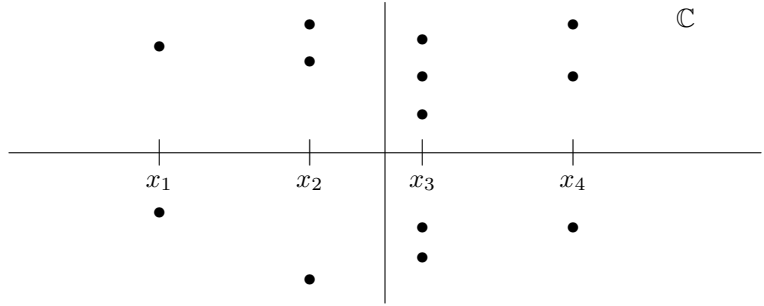

\begin{centering}
\[ \vcenter{	
	\xy
		(-30,14)*{\bullet}; (-30,-8)*{\bullet}; (-30,-4)*{x_1}; (-30,0)*{|};
		(-10,12)*{\bullet}; (-10,-17)*{\bullet}; (-10,17)*{\bullet}; (-10,-4)*{x_2}; (-10,0)*+{|};
		(5,10)*{\bullet}; (5,-10)*{\bullet}; (5,15)*{\bullet}; (5,-14)*{\bullet}; (5,5)*{\bullet}; (5,-4)*{x_3}; (5,0)*+{|};
		(25,10)*{\bullet}; (25,-10)*{\bullet}; (25,17)*{\bullet}; (25,-4)*{x_4}; (25,0)*+{|};
		{\ar@{-} (-50,0)*{}; (50,0)*{}};
		{\ar@{-} (0,-20)*{}; (0,20)*{}};
		(40,18)*{ \mathbb{C}};
	\endxy
	}
\]
	\caption{A configuration in Conf$_{(2,3,5,3)} (\mathbb{C}) \subset$ Conf$_{13} (\mathbb{C})$. The configuration is mapped by $\pi$ to $(x_1, x_1, x_2, x_2, x_2, x_3, x_3, x_3, x_3, x_3, x_4, x_4, x_4) \in \mathrm{Sym}_{(2,3,5,3)} (\mathbb{R}) \subset \mathrm{Sym}_{13} (\mathbb{R})$. Geometrically, the points in the configuration are arranged into columns based on their (ordered) real coordinates, while the configuration factors $\mathrm{Conf}_{\lambda_i}(\mathbb{R})$ record the imaginary parts of the points in respective columns.}
	\label{fig:fnf}
\end{centering}
\end{figure}

The collection of the spaces Conf$_\lambda(\mathbb{C})$ forms a cellular decomposition for the 1-point compactification of Conf$_n(\mathbb{C})$. Each configuration space Conf$_\lambda (\mathbb{C})$ gives a cell of dimension $n+l(\lambda)$. Loosely speaking, a cell can be constructed by first picking a composition $\lambda$ of $n$, placing the points in the configuration into columns according to the partition, and finally letting the columns to move horizontally (the symmetric part of the product) and the points on each column to move vertically without colliding (the configuration part of the product). For a configuration in such a cell, we may define a \textit{total order} of points in that configuration by labelling the lowest point on the leftmost column with $1$, increasing the indices as we move up, and continuing the process for all subsequent columns on the right. The index of a point in this total order is called the \textit{overall position} of that point in the configuration.

The boundary of a cell is obtained in two ways. The first type of boundary occurs by moving a point in a configuration to approach either the point at infinity or another point on the same vertical line, i.e. with the same real part. In this case, the boundary is the point at infinity, since the number of points in the configuration decreases to $n-1$ and hence the configuration is no longer an element in Conf$_n (\mathbb{C})$. The second type of boundary occurs by horizontally joining two neighboring vertical columns of the configuration without colliding the points. The boundary cell of Conf$_\lambda (\mathbb{C})$ obtained by joining the $i^\mathrm{th}$ and $i+1^\mathrm{st}$ columns has the form Conf$_\rho(\mathbb{C})$, where $\rho^i = (\lambda_1, ..., \lambda_{i-1}, \lambda_i + \lambda_{i+1}, ..., \lambda_k)$ is the \textit{coarsening} of $\lambda$ obtained by summing $\lambda_i$ and $\lambda_{i+1}$ $(1 \le i < l(\lambda)).$

\begin{prop}[\cite{fn62,fuk70}]
The space $\mathrm{Conf}_n (\mathbb{C}) \cup \{ \infty \}$ has a CW-complex decomposition where the positive dimension cells are given by $\mathrm{Conf}_\lambda(\mathbb{C})$ (of dimension $n+l(\lambda)$) with indices $\lambda$ coming from compositions of $n$. The boundary of $\mathrm{Conf}_\lambda (\mathbb{C})$ is the union of $\mathrm{Conf}_\rho(\mathbb{C})$ where $\lambda$ is a refinement of $\rho$. \end{prop}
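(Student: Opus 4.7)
The plan is to exhibit explicit characteristic maps for the proposed cells $\mathrm{Conf}_\lambda(\mathbb{C})$ and then verify the boundary combinatorics dictated by the coarsening relation on compositions. The key observation is already present in the setup: the product decomposition $\mathrm{Conf}_\lambda(\mathbb{C}) \cong \mathrm{Sym}_\lambda(\mathbb{R}) \times \prod_{i=1}^{l(\lambda)} \mathrm{Conf}_{\lambda_i}(\mathbb{R})$ realizes each stratum as a locally closed subset of $\mathrm{Conf}_n(\mathbb{C})$ homeomorphic to $\mathbb{R}^{n+l(\lambda)}$, with $l(\lambda)$ coordinates parametrizing the strictly increasing real parts $x_1 < \cdots < x_{l(\lambda)}$ of the columns and the remaining $n$ coordinates parametrizing the strictly increasing imaginary parts within each column.

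To produce a characteristic map, I would fix a homeomorphism $\mathbb{R}^{n+l(\lambda)} \cong \mathrm{int}\, D^{n+l(\lambda)}$ and verify that it extends continuously to $\Phi_\lambda : D^{n+l(\lambda)} \to \mathrm{Conf}_n(\mathbb{C}) \cup \{\infty\}$. For this I would classify the ways in which a sequence of points in $\mathrm{Conf}_\lambda(\mathbb{C})$ can diverge to the boundary sphere: either a column escapes to $\pm\infty$ along the real axis, or a point within a column escapes to $\pm\infty$ along the imaginary axis, or two points within a single column collide vertically, or two adjacent columns collide horizontally. The first three possibilities send the configuration to $\infty$ in the one-point compactification, since either a point is lost or the configuration leaves $\mathrm{Conf}_n(\mathbb{C})$ altogether. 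The fourth possibility sends the configuration into a stratum $\mathrm{Conf}_\rho(\mathbb{C})$ where $\rho^i=(\lambda_1,\dots,\lambda_{i-1},\lambda_i+\lambda_{i+1},\dots,\lambda_{l(\lambda)})$ is the one-step coarsening obtained by merging columns $i$ and $i+1$. Iterating this fourth type of collision yields all compositions $\rho$ of which $\lambda$ is a refinement, and these are exactly the strata whose closures meet $\mathrm{Conf}_\lambda(\mathbb{C})$.

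The CW-axioms are then routine: closure-finiteness holds because a composition of $n$ admits only finitely many coarsenings, and the weak-topology condition is automatic since $\mathrm{Conf}_n(\mathbb{C}) \cup \{\infty\}$ is compact Hausdorff and the number of cells in each dimension is finite. The main technical obstacle I expect is to make the characteristic map $\Phi_\lambda$ explicit enough to verify continuity at the boundary sphere: one must partition $S^{n+l(\lambda)-1}$ into closed regions indexed by the possible combinations of the four degeneration types, and check that on each ``column-merger'' region the restriction of $\Phi_\lambda$ agrees with the characteristic map of the appropriate coarsened cell, while on every other region it collapses to $\infty$. This bookkeeping is streamlined by observing that the entire stratification descends from the evident real-analytic stratification of $\mathrm{Sym}_n(\mathbb{R})$ along the projection $\pi$, so that the extension $\Phi_\lambda$ can be built compatibly across $\lambda$ by pulling back along $\pi$ and compactifying each $\mathrm{Conf}_{\lambda_i}(\mathbb{R})$ factor by a single point; cf.\ \cite{fn62,fuk70} for the original treatment.
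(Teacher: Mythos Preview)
The paper does not give its own proof of this proposition; it is stated with attribution to \cite{fn62,fuk70} and treated as a classical result being recalled. The surrounding text in Section~\ref{ssec:fnf} offers only an informal description of the two types of boundary behavior (points colliding or escaping to infinity, versus adjacent columns merging), which your proposal formalizes correctly.

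Your sketch is a faithful outline of the standard Fox--Neuwirth/Fuks argument: the product decomposition identifies each stratum with a Euclidean space, and the four degeneration types you enumerate match exactly the paper's informal discussion. The one point where you could be more precise is the claim that the characteristic map extends continuously to the boundary sphere; in practice this is handled not by partitioning $S^{n+l(\lambda)-1}$ directly but by compactifying each simplex-like factor (the open simplex $\{x_1<\cdots<x_{l(\lambda)}\}$ and each $\{y_1<\cdots<y_{\lambda_i}\}$) separately and checking that the resulting map to the one-point compactification is well defined on the quotient. But this is a matter of exposition rather than a gap, and your reference back to \cite{fn62,fuk70} is appropriate since the paper itself defers to those sources.
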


Fox-Neuwirth and Fuks provided an explicit cellular chain complex for Conf$_n(\mathbb{C}) \cup \{ \infty \}$ based on this decomposition:

\begin{defn}[The Fox-Neuwirth/Fuks complex]
For integers $i$ and $j$, let $c_{i,j} = \sum_\gamma (-1)^{|\gamma|}$ be the sum of the signs of all $(i,j)$-shuffles $\gamma : \{1,..., i\} \sqcup \{1,...,j\} \to \{1,..., i+j\}$. Let $C(n)_*$ denote the chain complex which in degree $q$ is generated over $\mathbb{Z}$ by the set of ordered partitions $\lambda = (\lambda_1, ..., \lambda_{q-n})$ of $n$ with $q-n$ parts. The differential $d: C(n)_q \to C(n)_{q-1}$ is given by the formula
\[
d(\lambda_1, ..., \lambda_{q-n}) = \sum^{q-n-1}_{i=1} (-1)^{i-1} c_{\lambda_i, \lambda_{i+1}} (\lambda_1, ..., \lambda_{i-1}, \lambda_i + \lambda_{i+1},..., \lambda_{q-n}).
\]
\end{defn}

The constant $c_{\lambda_i, \lambda_{i+1}}$ results from the formation of the boundary cells of Conf$_\lambda (\mathbb{C})$ by shuffling the points in the $i^\mathrm{th}$ and $i+1^\mathrm{st}$ columns into a single vertical line. The signs in the formula arise from the induced orientations on the boundary strata, as detailed in \cite{gs12}. There is a simple formula to compute this constant using the quantum binomial coefficient (see, e.g., Prop. 1.7.1 in \cite{sta12}):
\[c_{p,q} = \displaystyle {p + q \choose q}_{-1}. \]
The complex $C(n)_*$ is isomorphic to the cellular chain complex of Conf$_n (\mathbb{C}) \cup \{ \infty \}$, relative to the point at infinity. For more about this complex, consult \cite{fuk70,vai78,vas92}.

Let $T$ be a representation of $A_n$, and $\mathcal{T}$ the associated local system over $\mathrm{Conf}_n(\mathbb{C})$. There is an explicit formula for the differential of the chain complex $C(n)_* \otimes T$ that incorporates the braid action on $T$ given by:
\[ d[(\lambda_1, ..., \lambda_{q-n}) \otimes t] = \sum^{q-n-1}_{i=1} (-1)^{i-1} \Big[ (\lambda_1, ..., \lambda_{i-1}, \lambda_i + \lambda_{i+1},..., \lambda_{q-n}) \otimes \sum_{\gamma} (-1)^{|\gamma|} \tilde{\gamma}(t) \Big] \]
where $\gamma$ is drawn from the $(\lambda_i, \lambda_{i+1})$-shuffles, and $\tilde{\gamma}$ is its lift (as described in Section~\ref{ssec:bvs_qsa}) to the copy $A_{\lambda_i + \lambda_{i+1}} \le A_n$ consisting of braids that are only nontrivial on the $\lambda_i + \lambda_{i+1}$ strands starting with the $\lambda_1 + ... + \lambda_{i-1} +1^\mathrm{st}$ \cite{etw17}.

\begin{thm}[\cite{etw17}]\label{thm:etw_fnf}
There is an isomorphism
\[H_*(\mathrm{Conf}_n(\mathbb{C}) \cup \{ \infty \},\{ \infty \}; \mathcal{T}) \cong H_*(C(n)_* \otimes T).\]
\end{thm}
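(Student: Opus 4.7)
The plan is to directly identify the Fox-Neuwirth/Fuks complex $C(n)_* \otimes T$ with the cellular chain complex of the CW-pair $(\mathrm{Conf}_n(\mathbb{C}) \cup \{\infty\}, \{\infty\})$ with coefficients in the local system $\mathcal{T}$, using the stratification described in the preceding proposition.

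First I would verify the identification in each degree. Each cell $\mathrm{Conf}_\lambda(\mathbb{C})$ is homeomorphic to $\mathbb{R}^{n+l(\lambda)}$, hence contractible, so the restriction of $\mathcal{T}$ to it is a constant local system whose stalk at any basepoint is canonically $T$. After choosing a basepoint in each cell, the cell therefore contributes a single copy of $T$ to the relative cellular chain group in degree $n + l(\lambda)$. Collecting over all compositions $\lambda$ with $l(\lambda) = q - n$ reproduces precisely $C(n)_q \otimes T$ in degree $q$, with the composition labelling the cell and the tensor factor recording the stalk.

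Next I would analyze the cellular boundary map and match it term-by-term with the formula stated above. The topological boundary of $\mathrm{Conf}_\lambda(\mathbb{C})$ in $\mathrm{Conf}_n(\mathbb{C}) \cup \{\infty\}$ splits into two families of degenerations: those in which a point escapes to infinity or collides with another point in the same vertical column, which land in $\{\infty\}$ and therefore contribute nothing to the relative complex; and those in which the $i^{\mathrm{th}}$ and $(i+1)^{\mathrm{st}}$ columns coalesce into a single vertical line, producing a cell of shape $\mathrm{Conf}_\rho(\mathbb{C})$ for the coarsening $\rho = (\lambda_1,\dots,\lambda_i+\lambda_{i+1},\dots,\lambda_{q-n})$. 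The different ways of interleaving the points of the two merging columns along the common vertical line are parametrized by $(\lambda_i,\lambda_{i+1})$-shuffles $\gamma$, and as the columns are brought together the points trace out paths in $\mathrm{Conf}_n(\mathbb{C})$ homotopic to the braid lift $\tilde{\gamma} \in A_{\lambda_i+\lambda_{i+1}} \le A_n$ described in Section~\ref{ssec:bvs_qsa}. Consequently the parallel transport of $\mathcal{T}$ across this attaching map is exactly the action of $\tilde{\gamma}$ on $T$, which reproduces the sum $\sum_\gamma (-1)^{|\gamma|}\tilde{\gamma}(t)$ up to signs.

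The main obstacle is the sign bookkeeping. The $(-1)^{i-1}$ prefactor comes from the position of the coalescing pair of columns in the product orientation on $\mathrm{Conf}_\lambda(\mathbb{C}) \cong \mathrm{Sym}_{l(\lambda)}(\mathbb{R}) \times \prod_j \mathrm{Conf}_{\lambda_j}(\mathbb{R})$, while the sign $(-1)^{|\gamma|}$ in each summand encodes the relative orientation between the induced orientation on $\mathrm{Conf}_{\lambda_i+\lambda_{i+1}}(\mathbb{R})$ and the one obtained by concatenating $\mathrm{Conf}_{\lambda_i}(\mathbb{R})$ and $\mathrm{Conf}_{\lambda_{i+1}}(\mathbb{R})$. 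Confirming that these orientation conventions can be chosen coherently across all cells so that the algebraic boundary matches the geometric one is the delicate step; for this I would invoke the orientation analysis of Giusti-Sinha \cite{gs12}. Modulo these sign checks, the identification of cellular chain complexes, and hence of their homologies, is immediate.
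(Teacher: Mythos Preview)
The paper does not supply its own proof of this theorem; it is quoted as a result of \cite{etw17}. However, the paper does prove the analogous statement for $\mathrm{Conf}_n(\mathbb{C}^\times)$ (Theorem~\ref{thm:hom_FNF_Cx}) and states there that the argument follows the outline of the proof in \cite{etw17}, so we can infer what the original proof looks like. That approach is organized differently from yours: instead of trivializing $\mathcal{T}$ over each contractible cell and reading off parallel transport along the attaching maps, one lifts the Fox--Neuwirth cells to the universal cover and labels each lifted cell by a pair $(\lambda, a)$ with $a \in A_n$. The differential is then described by a labelling convention---each column merge that alters the total order of the configuration by a shuffle $\gamma$ left-multiplies the group label by $\tilde{\gamma}$---and only at the end does one tensor over $\mathbb{Z}[A_n]$ with $T$ to recover $C(n)_* \otimes T$.

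Your approach is correct and amounts to the same thing, but the universal-cover packaging has a small practical advantage: the coherence of basepoint and path choices across all cells is absorbed into the group-element labels, so the compatibility of the braid actions under iterated face maps is automatic from the group law rather than something one has to verify for the parallel-transport picture. Conversely, your formulation makes the geometric content (monodromy along the merging of columns) more transparent. Both defer the orientation signs to \cite{gs12}.
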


Let $\mathcal{T}^*$ denote the dual local system of $\mathcal{T}$, which is associated with the dual representation $T^*$ of $T$. By applying the universal coefficient theorem and Poincar\'e duality to the dual over $k$ of the left hand side with coefficients in $\mathcal{T}^*$, we have
\[\begin{split}
    H_* (\text{Conf}_{n}(\mathbb{C}) \cup \{ \infty \}, \{\infty\} ; \mathcal{T}^*)^* & \cong H^* (\text{Conf}_{n}(\mathbb{C}) \cup \{ \infty \}, \{\infty\} ; \mathcal{T})\\
    & \cong H^*_c (\text{Conf}_{n}(\mathbb{C});\mathcal{T})\\
    & \cong H_{2n-*} (\text{Conf}_{n}(\mathbb{C});\mathcal{T}).
\end{split}\]
Since $\pi_1 (\mathrm{Conf}_n(\mathbb{C})) = A_n$, we may rewrite the isomorphism in the previous theorem as
\[H_*(A_n; T) \cong H_{2n-*}(C(n)_* \otimes T^*)^*. \]

Recall that given any $B_n$-representation, the induced representation gives a representation for the parent group $A_{n+1}$. Combining this fact with the previous result, we obtain the following corollary:

\begin{cor}\label{cor:etw_rewrite}
For any $B_n$-representation $L$, there is an isomorphism
\[ H_*(B_n; L) \cong H_{2n+2-*} \Big( C(n+1)_* \otimes \mathrm{Ind}^{A_{n+1}}_{B_n} (L^*) \Big)^*. \]
\end{cor}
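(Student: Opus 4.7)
The plan is to apply Shapiro's lemma together with the reformulation of Theorem~\ref{thm:etw_fnf} stated just before the corollary. Since $B_n$ is a finite index subgroup of $A_{n+1}$ (the index is $n+1$ by Crisp's result cited earlier), Shapiro's lemma gives a natural isomorphism
\[ H_*(B_n; L) \cong H_*\bigl(A_{n+1}; \mathrm{Ind}^{A_{n+1}}_{B_n}(L)\bigr). \]
This reduces the problem for the type-\textit{B} Artin group to a braid group homology computation with a specific, explicit local coefficient system, which is exactly the setting handled by the Fox-Neuwirth/Fuks machinery.

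Next I would take the reformulation of Theorem~\ref{thm:etw_fnf} derived in the paragraph preceding the corollary, namely
\[ H_*(A_m; T) \cong H_{2m-*}\bigl(C(m)_* \otimes T^*\bigr)^*, \]
and specialize it to $m = n+1$ with $T = \mathrm{Ind}^{A_{n+1}}_{B_n}(L)$. This produces an identification of $H_*(B_n; L)$ with $H_{2n+2-*}\bigl(C(n+1)_* \otimes \mathrm{Ind}^{A_{n+1}}_{B_n}(L)^*\bigr)^*$, which is already very close to the claimed formula; what remains is to replace $\mathrm{Ind}^{A_{n+1}}_{B_n}(L)^*$ by $\mathrm{Ind}^{A_{n+1}}_{B_n}(L^*)$.

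The remaining step is the standard fact that, for a finite index subgroup $H \le G$ and a finite dimensional $H$-representation $L$, induction commutes with $k$-linear duality: $\mathrm{Ind}^G_H(L)^* \cong \mathrm{Ind}^G_H(L^*)$. Concretely, using the coset decomposition $\mathrm{Ind}^{A_{n+1}}_{B_n}(L) \cong \bigoplus_{i=1}^{n+1} \alpha_i L$ from Section~\ref{ssec:ind_rep}, the dual decomposes as $\bigoplus_{i=1}^{n+1} (\alpha_i L)^*$, and the pairing of an element of the dual with $\alpha_i \ell$ can be repackaged as an element $\alpha_i \ell^* \in \mathrm{Ind}^{A_{n+1}}_{B_n}(L^*)$; a direct check against the explicit $A_{n+1}$-action in Proposition~\ref{prop:act_ind_rep} shows that this bijection is $A_{n+1}$-equivariant. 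Substituting this isomorphism into the identification from the previous paragraph yields the claimed corollary.

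I expect the only mildly subtle step to be the last one, verifying that the $k$-linear dual of the induced representation is naturally isomorphic, as an $A_{n+1}$-representation, to the induction of the dual. Everything else is a formal assembly of Shapiro's lemma and the already-established Fox-Neuwirth/Fuks identification; no new geometric or algebraic input beyond what has been set up in Sections~\ref{ssec:ind_rep} and \ref{ssec:fnf} should be required.
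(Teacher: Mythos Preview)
Your proposal is correct and follows essentially the same approach as the paper: Shapiro's lemma followed by the rewritten form of Theorem~\ref{thm:etw_fnf}. In fact you are more explicit than the paper, which silently identifies $\bigl(\mathrm{Ind}^{A_{n+1}}_{B_n}(L)\bigr)^*$ with $\mathrm{Ind}^{A_{n+1}}_{B_n}(L^*)$; your justification of this step via the finite coset decomposition and Proposition~\ref{prop:act_ind_rep} is a welcome addition.
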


\begin{proof}
By Shapiro's Lemma (see, e.g., Lemma 6.3.2 of \cite{wei94}), the homology of $B_n$ with local coefficients in $L$ can be identified with the homology of the parent group $A_{n+1}$ with coefficients in the induced representation, i.e.
\[ H_*(B_n; L) \cong H_* \Big( A_{n+1}; \mathrm{Ind}^{A_{n+1}}_{B_n} (L) \Big). \]
Applying the rewritten form of Theorem~\ref{thm:etw_fnf} to the right side completes the proof of the corollary.
\end{proof}

This homological algebra result provides a convenient machinery to compute the homology of $B_n$ (or similarly any other subgroup of the braid groups) with twisted coefficients, as long as we understand the action of $A_{n+1}$ on the induced representation. However, it does not offer a geometrically intuitive explanation for how the cellular homology on the right side is related to the homology of the Artin groups of type \textit{B}. We will resolve this issue in Section~\ref{sec:fnf}, by explicitly constructing a cellular chain complex that computes the homology of $B_n$ with coefficients in $L$ and relating it to the complex $C(n+1)_* \otimes \mathrm{Ind}^{A_{n+1}}_{B_n} (L^*)$.


\subsection{Homological algebra prerequisites}\label{ssec:alg_req}

The purpose of this subsection is to develop homological algebra objects necessary to express the homology of the complex $C(n+1)_* \otimes \mathrm{Ind}^{A_{n+1}}_{B_n} (L^*)$ elaborated in the previous section when $L = V^{\otimes n}\otimes W$, the $B_n$-representation derived from a separable left-braided vector space $(V,W,\sigma,\tau,\varphi)$.

\begin{defn}
Given an associative $k$-algebra $A$ and an $A$-bimodule $M$, the \textit{chain complex} $F_*(M,A)$ is defined at degree $q \ge 1$ by
\[ F_q(M,A) = \displaystyle \bigoplus_{i=1}^{q} A^{\otimes i-1}\otimes M\otimes A^{\otimes q-i}\]
with face maps for $1\le m \le q-1$:
\[d_m (a_1 \otimes ... \otimes a_{i-1} \otimes \mu_i \otimes a_{i+1} \otimes ... \otimes a_q) = \begin{cases}
a_1 \otimes ... \otimes a_m a_{m+1} \otimes ... \otimes a_q \hspace{.2in} m \ne i-1, i\\
a_1 \otimes ... \otimes a_{i-1} \mu_i \otimes ... \otimes a_q \hfill m = i-1\\
a_1 \otimes ... \otimes \mu_i a_{i+1} \otimes ... \otimes a_q \hfill m = i\\
\end{cases}\]
and differential $d = \sum_{m=1}^{q-1} (-1)^{m-1} d_m$.
\end{defn}

The graded group structure and the differential of $F_*(M,A)$ are similar to that of the extended two-sided bar complex $B^e_*(A,A,A)$ of the associative algebra $A$ (see, e.g., \cite{gin05,etw17}). In fact, the chain complex $F_*(M,A)$ can be constructed by introducing modifications to $B^e_*(A,A,A)$ in the following way: to construct $F_q(M,A)$, we start with $B^e_{q-2}(A,A,A) \cong A^{\otimes q}$ and subsequently replace exactly one copy of $A$ with a copy of $M$ for every copy of $A$ in the tensor product. This yields $q$ tensor products of the form $A^{\otimes i-1}\otimes M\otimes A^{\otimes q-i}$ for $1 \le i \le q$, and their direct sum forms the $k$-module $F_q(M,A)$. Any multiplication with this new factor $M$ is replaced by either the left or the right multiplication of this $A$-bimodule. It is easy to see that $F_*(M,A)$ forms a well-defined chain complex in a similar manner as the bar complex $B^e_*(A,A,A)$.

\begin{prop}
$F_*(M,A)$ is an exact chain complex.
\end{prop}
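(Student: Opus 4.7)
The plan is to prove exactness by exhibiting an explicit contracting homotopy, adapting the classical argument that the extended two-sided bar complex $B^e_*(A,A,A)$ is acyclic. As the paper observes, $F_*(M,A)$ differs from $B^e_*(A,A,A)$ only in that exactly one tensor factor in each summand of $F_q(M,A)$ is replaced by a copy of $M$, with any multiplication against that factor replaced by the corresponding left or right $A$-action. Since the standard contracting homotopy on $B^e_*(A,A,A)$ uses only the unit of $A$ and the associativity of its multiplication/action, it should transfer to $F_*(M,A)$ essentially verbatim via the $A$-bimodule structure on $M$.

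Concretely, I would define a $k$-linear map $s \colon F_q(M,A) \to F_{q+1}(M,A)$ on pure tensors by
\[
s(y_1 \otimes y_2 \otimes \cdots \otimes y_q) = 1 \otimes y_1 \otimes y_2 \otimes \cdots \otimes y_q,
\]
where $1 \in A$ is the unit and exactly one of the $y_j$ lies in $M$ while the others lie in $A$. Under $s$, an element of the summand $A^{\otimes i-1} \otimes M \otimes A^{\otimes q-i}$ lands in $A^{\otimes i} \otimes M \otimes A^{\otimes q-i} \subset F_{q+1}(M,A)$, so $s$ is well-defined on each summand.

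To prove exactness, I would verify the contracting homotopy identity $d \circ s + s \circ d = \mathrm{id}_{F_q(M,A)}$. Writing $d'_m$ for the face maps of $F_{q+1}$, the leftmost face gives $d'_1(s(x)) = 1 \cdot y_1 \otimes y_2 \otimes \cdots \otimes y_q = x$, using the unit axiom in $A$ if $y_1 \in A$ and the left $A$-module axiom $1 \cdot \mu = \mu$ if the $M$-factor has shifted into the second position (the case $i=1$). For $2 \le m \le q$, the face $d'_m$ acts only on factors inherited from $x$, so $d'_m(s(x)) = s(d_{m-1}(x))$. Summing with the signs $(-1)^{m-1}$ and reindexing by $k = m-1$ gives
\[
d(s(x)) = x + \sum_{m=2}^{q} (-1)^{m-1} s(d_{m-1}(x)) = x - s(d(x)),
\]
i.e.\ $ds + sd = \mathrm{id}$, so the identity on $F_*(M,A)$ is chain-homotopic to zero, which yields acyclicity.

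The main obstacle is purely bookkeeping: one must carefully handle the boundary case when the unique $M$-factor sits at the leftmost position ($i=1$), so that the identity $d'_1 \circ s = \mathrm{id}$ is justified by the $A$-bimodule axiom $1 \cdot \mu = \mu$ rather than by the multiplication in $A$, and one must track that the sign conventions used to define $d$ match the reindexing $k=m-1$. No deeper obstacle is expected, since the argument is a direct translation of the classical extra-degeneracy contraction for the bar complex.
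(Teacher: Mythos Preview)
Your proposal is correct and is essentially the same argument as the paper's: both use the contracting homotopy $s$ that inserts the unit $1\in A$ on the left, which the paper phrases as the extra degeneracy $s_0$ of an underlying simplicial object. You unpack the identity $ds+sd=\mathrm{id}$ explicitly, whereas the paper invokes the standard fact that an extra degeneracy forces contractibility, but the content is identical.
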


\begin{proof}
$F_*(M,A)$ has a simplicial structure $X_*$ where $X_q = F_{q-2}(M,A)$. The face maps are given in the definition of the chain complex, while the degeneracy maps are defined by
\[\begin{array}{c}
    s_m(a_1 \otimes ... \otimes a_{i-1} \otimes \mu_i \otimes a_{i+1} \otimes ... \otimes a_q)= \\[5pt]
    a_1 \otimes ... \otimes a_m \otimes 1 \otimes a_{m+1} \otimes ... \otimes a_{i-1} \otimes \mu_i \otimes a_{i+1} \otimes ... \otimes a_q
\end{array}\]
for all $0 \le m \le q$. The extra degeneracy $s_0$ guarantees that the simplicial object is contractible, hence the chain complex $F_*(M,A)$ is exact.
\end{proof}

It follows that $F_*(M,A)$, upon omitting $M$ in degree $1$, gives a free resolution of $M$ as an $A$-bimodule.

Let $I := A/k$ be the \textit{augmentation ideal} of $A$, consisting of elements of positive degree. We then have a decomposition $A \cong I \oplus k$, hence given any $a \in A$ we may write $a = \overline{a} + \hat{a}$ where $\overline{a}$ and $\hat{a}$ are the projections of $a$ onto the factors $I$ and $k$, respectively. Let $F_*(M,I)$ denote the chain complex obtained by replacing all copies of $A$ in $F_*(M,A)$ with $I$ (one may think of $F_*(M,I)$ as a reduced form of the chain complex $F_*(M,A)$).

Given an associative $k$-algebra $A$, let $A^{op}$ denote the opposite algebra, and $A^e := A \otimes A^{op}$ be the enveloping algebra of $A$. There is a canonical isomorphism $(A^e)^{op} \cong A^e$, thus an $A$-bimodule can be regarded as a left (or equivalently, right) $A^e$-module \cite{gin05}. We compute the homology of the chain complex $F_*(M,I)$ in the following theorem.

\begin{thm}
$H_*(F_{*}(M,I)) \cong \mathrm{Tor}^{A^e}_{*-1}(M,k)$.
\end{thm}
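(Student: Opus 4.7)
The plan is to compute $\mathrm{Tor}^{A^e}_{*-1}(M,k)$ by producing a free $A^e$-resolution $P_*$ of $M$ whose tensor product with $k$ over $A^e$ recovers $F_*(M,I)$ up to a shift of one in homological degree. Concretely, I would use the normalized two-sided bar resolution of the bimodule $M$.

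For $n \geq 0$, I would define
\[ P_n = \bigoplus_{a+b=n} A \otimes I^{\otimes a} \otimes M \otimes I^{\otimes b} \otimes A, \]
with the $A^e$-action carried on the two outermost copies of $A$. Each summand has the shape $A \otimes X \otimes A$ for a $k$-vector space $X$ with outer $A^e$-action, and is therefore free as an $A^e$-module, so $P_n$ is free. Augmenting by the bimodule multiplication $\epsilon : P_0 = A \otimes M \otimes A \to M$, $a \otimes m \otimes b \mapsto amb$, and equipping $P_*$ with the standard bar differential (multiply adjacent factors, projecting the product of two $I$-entries back onto $I$), I would verify exactness of $\cdots \to P_1 \to P_0 \to M \to 0$ by exhibiting a contracting homotopy
\[ h_n(a \otimes x_1 \otimes \cdots \otimes x_n \otimes b) = 1 \otimes \bar{a} \otimes x_1 \otimes \cdots \otimes x_n \otimes b, \]
which exactly mirrors the extra-degeneracy argument the excerpt uses to prove exactness of $F_*(M,A)$. (Equivalently, one recognizes $P_*$ as the normalized two-sided bar resolution of $M$ over $A^e$, whose acyclicity is standard.)

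Since for any $k$-vector space $X$ with outer $A^e$-action there is a canonical identification $k \otimes_{A^e} (A \otimes X \otimes A) \cong X$, applying $k \otimes_{A^e} -$ summandwise yields
\[ k \otimes_{A^e} P_n \;\cong\; \bigoplus_{a+b=n} I^{\otimes a} \otimes M \otimes I^{\otimes b} \;=\; F_{n+1}(M,I), \]
and a direct check shows that the induced differential on $k \otimes_{A^e} P_*$ agrees with the differential on $F_{*+1}(M,I)$. Consequently
\[ \mathrm{Tor}^{A^e}_n(M,k) = H_n(k \otimes_{A^e} P_*) = H_n(F_{*+1}(M,I)) = H_{n+1}(F_*(M,I)), \]
and reindexing by $q = n+1$ gives $H_q(F_*(M,I)) \cong \mathrm{Tor}^{A^e}_{q-1}(M,k)$, as claimed.

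The main technical point is the matching of differentials under normalization: one must verify that the bar differential on $P_*$ — which multiplies adjacent factors in $A$ and then projects back onto $I$ at the inner entries — descends after $k \otimes_{A^e} -$ to precisely the face maps $d_m$ of $F_*(M,I)$, with consistent signs. This is a standard but fiddly calculation, essentially identical to the proof that the normalized and unnormalized bar complexes compute the same homology, and it parallels the simplicial verification the excerpt already gives for the acyclicity of $F_*(M,A)$.
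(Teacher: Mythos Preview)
Your approach is correct and takes a genuinely different route from the paper. The paper builds a free $A^e$-resolution $Z_*$ of $k$ (extracted as a subcomplex of the two-sided bar complex $B_*(A,A,A)$) and identifies $F_*(M,I)\cong M\otimes_{A^e}Z_*$; proving that $Z_*$ resolves $k$ requires an ad~hoc splitting $Z_*\cong X_*\oplus Y_*$ and separate homotopy arguments for each piece. You instead resolve $M$ by $P_*$---the normalized total complex of the bicomplex $B_*(A,A,A)\otimes_A M\otimes_A B_*(A,A,A)$---and identify $k\otimes_{A^e}P_*\cong F_{*+1}(M,I)$. Your route is more conceptual and the identification with $F_{*+1}(M,I)$ is essentially immediate; the paper's route, in exchange, makes the connection with the Hochschild chain complex explicit.

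There is one genuine gap. The contracting homotopy you propose does \emph{not} prove acyclicity of the augmented complex $P_*\to M$. Your $h$ is the horizontal extra degeneracy: it contracts each column $B_*(A,A,A)\otimes_A M\otimes_A B_b(A,A,A)$ onto $M\otimes_A B_b(A,A,A)$, but the total complex is augmented only to $M$, not to $\bigoplus_b M\otimes_A B_b(A,A,A)$. Concretely, on $P_0=A\otimes M\otimes A$ one finds
\[
(Dh_0+h_{-1}\epsilon)(a\otimes m\otimes b)=\bar a\otimes m\otimes b-1\otimes \bar a m\otimes b+1\otimes amb\otimes 1,
\]
which is not $a\otimes m\otimes b$ unless $b\in k$. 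The fix is standard: argue via the bicomplex filtration. Your $h$ shows each column is contractible, so the column spectral sequence collapses and the total complex is quasi-isomorphic to $M\otimes_A B_*(A,A,A)$, which resolves $M$. (Relatedly, $P_*$ is not literally ``the normalized bar resolution of $M$ over $A^e$''; it is the tensor product over $A$ of two bar resolutions, but this is cosmetic.)
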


\begin{proof}
Recall that the Hochschild chain complex $CH_*(A,M)$ is defined degree-wise by $CH_q(A,M) = M \otimes A^{\otimes q}$, with face maps
\[d_m (\mu \otimes a_1 \otimes ... \otimes a_q) = \begin{cases}
\mu a_1 \otimes a_2 \otimes ... \otimes a_q \quad \hspace{1.6in}\hfill m = 0\\
\mu \otimes a_1 \otimes ... \otimes a_m a_{m+1} \otimes ... \otimes a_q \hfill 1 \le m \le q-1 \\
a_q \mu \otimes a_1 \otimes ... \otimes a_{q-1} \hfill m = q\\
\end{cases}\]
and differential $d = \sum_{m=0}^{q} (-1)^m d_m$. When the $A$-bimodule $M$ is treated as a left $A^e$-module, there is an identification of the $k$-module $CH_q(A,M)$ as $M \otimes A^{\otimes q} \cong M \otimes_{A^e} A^{\otimes q+2}$, which induces an isomorphism of chain complexes $CH_*(A,M) \cong M \otimes_{A^e} B_*(A,A,A)$ where $B_*(A,A,A)$ is the two-sided bar complex \cite{gin05}.

Construct a chain map $f_*: F_*(M,I) \to CH_*(A,M)$ by mapping
\[a_1 \otimes ... \otimes a_{i-1} \otimes \mu_i \otimes a_{i+1} \otimes ... \otimes a_q \mapsto (-1)^{q(i-1)} \mu_i \otimes a_{i+1} \otimes ... \otimes a_q \otimes 1 \otimes a_1 \otimes ... \otimes a_{i-1}.\]
Note that for all $m \ne i$, $\mathrm{deg}(a_m) > 0$ since each $a_m$ is in the augmentation ideal $I$ of $A$. It follows that there is precisely one occurrence of the unit in the image of an arbitrary element of $F_*(M,I)$, and its position is determined by the indices $q$ and $i$. We deduce that this map is injective. The isomorphic image of $F_*(M,I)$ via this map forms a subcomplex of the Hochschild chain complex which in degree $q$ has the form $\bigoplus^q_{i=1} M \otimes I^{\otimes q-i} \otimes k \otimes I^{\otimes i-1}$.
Via the identification described above, we have the following isomorphisms of $k$-modules
\[ F_q(M,I) \cong \displaystyle\bigoplus^q_{i=1} M \otimes I^{\otimes q-i} \otimes k \otimes I^{\otimes i-1} \cong \displaystyle\bigoplus^q_{i=1} M \underset{A^e}{\otimes} (A \otimes I^{\otimes q-i} \otimes k \otimes I^{\otimes i-1} \otimes A)\]
and thus an isomorphism of chain complexes $F_*(M,I) \cong M \otimes_{A^e} Z_*$, where $Z_*$ is the subcomplex of the two-sided bar complex $B_*(A,A,A)$ defined degree-wise by
\[ Z_q = \displaystyle \bigoplus_{i=1}^q A \otimes I^{\otimes q-i} \otimes k \otimes I^{\otimes i-1} \otimes A \]
for all $q \ge 1$. Observe that $Z_*$ is free as a graded right $A^e$-module, so to prove the statement of the theorem, it suffices to show that $Z_*$ is a resolution of $k$.

We will show that $Z_*$ can be written as the direct sum of two subcomplexes of the two-sided bar complex $B_*(A,A,A)$ and compute their homologies. Define $X_*$ and $Y_*$ to be the subcomplexes of $B_*(A,A,A)$ defined degree-wise respectively by
\[X_q = k \otimes k \otimes I^{\otimes q-1} \otimes A\]
and
\[Y_q = (I \otimes k \otimes I^{\otimes q-1} \otimes A) \oplus \left( \displaystyle \bigoplus^{q-1}_{i=1} A \otimes I^{\otimes q-i} \otimes k \otimes I^{\otimes i-1} \otimes A \right).\]
Observe that for every $q \ge 1$, $Z_q \cong X_q \oplus Y_q$, so as chain complexes (more specifically, subcomplexes of the bar complex) indeed we have $Z_* \cong X_* \oplus Y_*$.

Define collections of maps $g_* : X_*\to X_{*+1}$ by
\[g_q (1 \otimes 1 \otimes a_2 \otimes ... \otimes a_q \otimes a_{q+1}) = (-1)^{q-1} 1 \otimes 1 \otimes a_2 \otimes ... \otimes a_q \otimes \overline{a_{q+1}} \otimes 1\]
and $h_*: Y_* \to Y_{*+1}$ by
\[h_q (a_0 \otimes a_1 \otimes ... \otimes a_q \otimes a_{q+1}) = 1 \otimes \overline{a_0} \otimes a_1 \otimes ... \otimes a_q \otimes a_{q+1}\]
where $\overline{a}$ denotes the positive-degree part of $a$.
We claim that $g_*$ satisfies the null homotopy condition for all degrees except for degree 1, i.e. $\mathrm{id} = d_{q+1} g_q + g_{q-1} d_q$ for all $q \ge 2$. We compute:
\[\begin{array}{l}
     d_{q+1} g_q(1 \otimes 1 \otimes a_2 \otimes ... \otimes a_q \otimes a_{q+1}) \\[5pt]
     = d_{q+1}\left((-1)^{q-1} 1 \otimes 1 \otimes a_2 \otimes ... \otimes a_q \otimes \overline{a_{q+1}} \otimes 1\right) \\[5pt]
     = (-1)^{q-1} \Bigg[ \displaystyle\sum_{m=2}^{q-1} \big((-1)^m 1 \otimes 1 \otimes ... \otimes a_m a_{m+1} \otimes ... \otimes \overline{a_{q+1}} \otimes 1 \big)\\[5pt]
     \quad + (-1)^q 1 \otimes 1 \otimes ... \otimes a_q \overline{a_{q+1}} \otimes 1 + (-1)^{q+1} 1 \otimes 1 \otimes ... \otimes a_q \otimes \overline{a_{q+1}} \Bigg]
\end{array}\]
and
\[\begin{array}{l}
     g_{q-1} d_q(1 \otimes 1 \otimes a_2 \otimes ... \otimes a_q \otimes a_{q+1}) \\[5pt]
     = g_{q-1}\Bigg[ \displaystyle\sum_{m=2}^{q} (-1)^m 1 \otimes 1 \otimes ... \otimes a_m a_{m+1} \otimes ... \otimes a_{q+1} \otimes 1 \Bigg]\\[15pt]
     = (-1)^{q} \Bigg[ \displaystyle\sum_{m=2}^{q-1} \big((-1)^m 1 \otimes 1 \otimes ... \otimes a_m a_{m+1} \otimes ... \otimes \overline{a_{q+1}} \otimes 1 \big) \\[5pt]
     \quad + (-1)^q 1 \otimes 1 \otimes ... \otimes \overline{a_q a_{q+1}} \otimes 1 \Bigg].
\end{array}\]
Observe that $\overline{a_q a_{q+1}} = a_q a_{q+1}$ since $\mathrm{deg}(a_q) > 0$. We can then simplify the sum of the above formulae to get
\[\begin{array}{l}
      (d_{q+1} g_q + g_{q-1} d_q)(1 \otimes 1 \otimes a_2 \otimes ... \otimes a_q \otimes a_{q+1}) \\[5pt]
      = 1 \otimes 1 \otimes ... \otimes a_q a_{q+1} \otimes 1 - 1 \otimes 1 \otimes ... \otimes a_q \overline{a_{q+1}} \otimes 1 + 1 \otimes 1 \otimes ... \otimes a_q \otimes \overline{a_{q+1}} \\[5pt]
      = 1 \otimes 1 \otimes ... \otimes a_q \widehat{a_{q+1}} \otimes 1 + 1 \otimes 1 \otimes ... \otimes a_q \otimes \overline{a_{q+1}} \\[5pt]
      =  1 \otimes 1 \otimes ... \otimes a_q \otimes \widehat{a_{q+1}} + 1 \otimes 1 \otimes ... \otimes a_q \otimes \overline{a_{q+1}} = 1 \otimes 1 \otimes ... \otimes a_q \otimes a_{q+1}.
\end{array}\]
The existence of the (almost) null homotopy $g_*$ implies that $H_q(X_*)$ vanishes for all $q \ge 2$. The computation of $H_1(X_*)$ is straighforward:
\[H_1 (X_*) = \dfrac{\mathrm{ker} (d_1)}{\mathrm{im} (d_2)} = \dfrac{k \otimes k \otimes A}{k \otimes k \otimes I} \cong \dfrac{A}{I} \cong k.\]
Hence the chain complex $X_*$ is in fact a resolution of $k$. A similar computation shows that $h_*$ is a null homotopy and thus the complex $Y_*$ is exact. It follows from the direct sum decomposition above that $Z_*$ is a resolution of $k$, concluding our proof.
\end{proof}

If $M$ is graded, for an element $f = a_1 \otimes ... \otimes \mu_i \otimes ... \otimes a_q$ with $a_m$ homogeneous elements of $A$ of degree deg$(a_m)$, we may define the degree of $f$ to be $\mathrm{deg}(f) := \mathrm{deg}(\mu_i) + \sum_{m\ne i} \mathrm{deg}(a_m)$. The differential in $F_*(M,I)$ strictly preserves the degree of elements, hence we may define the split subcomplex generated by homogeneous elements of $F_*(M,I)$ of degree precisely $n$, denoted by $F_{*,n}(M,I)$.

\begin{cor}\label{cor:hom_Fn}
$H_*(F_{*,n}(M,I)) \cong \mathrm{Tor}^{A^e}_{*-1,n}(M,k)$.
\end{cor}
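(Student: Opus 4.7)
The plan is to observe that the proof of the preceding theorem respects the internal grading in all of its steps, so it descends degree-wise and yields the graded refinement for free. What needs to be checked is that every chain complex and every chain map appearing in that proof is an internally-graded object or a degree-preserving morphism.

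First, since $A$ is a graded $k$-algebra and $M$ a graded $A$-bimodule, both the augmentation ideal $I$ (the positive-degree part of $A$) and the enveloping algebra $A^e$ are naturally graded, and the face maps $d_m$ of $F_*(M,I)$ preserve internal degree because they apply the (graded) multiplication or the (graded) bimodule action. Consequently $F_*(M,I)$ splits canonically as $\bigoplus_n F_{*,n}(M,I)$, and the same is true of the Hochschild complex $CH_*(A,M)$ and the two-sided bar complex $B_*(A,A,A)$. The isomorphism $F_*(M,I) \cong M \otimes_{A^e} Z_*$ constructed in the previous proof, as well as the direct-sum decomposition $Z_* \cong X_* \oplus Y_*$, are manifestly compatible with these internal gradings.

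Next, one checks that the null homotopies $g_*$ on $X_*$ and $h_*$ on $Y_*$ are internal-degree-preserving. Both are given by inserting the unit $1 \in k \subset A$, which is of internal degree $0$, and by extracting the positive-degree part $\overline{a}$ of an element, which is an internal-degree-preserving projection. It follows that $Z_*$ is a graded resolution of $k$ (concentrated in internal degree $0$) by graded-projective $A^e$-modules. Applying $M \otimes_{A^e} (-)$ and extracting the internal-degree-$n$ summand therefore computes $\mathrm{Tor}^{A^e}_{*-1,n}(M,k)$ by the standard definition of bigraded Tor, which matches $H_*(F_{*,n}(M,I))$.

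The only real obstacle, and it is minor, is bookkeeping: one must confirm that the convention for $\mathrm{deg}(f) = \mathrm{deg}(\mu_i) + \sum_{m \ne i} \mathrm{deg}(a_m)$ used to define $F_{*,n}(M,I)$ agrees with the internal grading on $M \otimes_{A^e} Z_*$ transported through the chain isomorphism of the previous theorem. Since that isomorphism is simply a reordering of tensor factors with sign $(-1)^{q(i-1)}$ and with an inserted unit in internal degree $0$, the internal degree of an element is preserved, and the corollary follows.
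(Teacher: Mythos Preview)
Your proposal is correct and takes essentially the same approach as the paper: the paper's proof simply observes that the differentials on $F_*(M,I)$ and on the Hochschild chain complex strictly preserve internal degree, so the corollary follows directly from the preceding theorem. You have supplied more detail than the paper does (checking that the null homotopies $g_*$ and $h_*$ and the isomorphism $F_*(M,I)\cong M\otimes_{A^e}Z_*$ are degree-preserving), but the underlying idea is identical.
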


\begin{proof}
Since the differentials on both the complex $F_*(M,I)$ and the Hochschild chain complex strictly preserve the degree of elements, this corollary follows directly from the previous theorem.
\end{proof}

To state the main result of the paper, we need to define a compatible $\mathfrak{A}$-bimodule where $\mathfrak{A}$ is a quantum shuffle algebra.

\begin{defn}\label{defn:M}
Let $(V,W,\sigma,\tau,\varphi)$ be a separable left-braided vector space. The \textit{graded} $\mathfrak{A}(V)$\textit{-bimodule} $\mathfrak{M} = \mathfrak{M}(V,W)$ is defined by
\[ \mathfrak{M} = \displaystyle \bigoplus_{q \ge 1} \bigoplus_{0 \le j \le q-1} V^{\otimes j} \otimes W \otimes V^{\otimes q-j-1}.\]
Multiplication on both sides is given by the quantum shuffle product in the following sense: for the left multiplication we have
\[[a_1 | ... | a_p] \star [b_1 | ... | b_j |w|b_{j+2}|...| b_q] = \sum_{\gamma} \tilde{\gamma} [a_1 | ... | a_p | b_1 | ... | b_j | w | b_{j+2} | ... | b_q]\]
and for the right multiplication
\[\displaystyle [a_1 | ... |a_j|w|a_{j+2}|... | a_p] \star [b_1 | ... | b_q] = \sum_{\gamma}  \tilde{\gamma} [a_1 | ...|a_j|w|a_{j+2}|... | a_p | b_1 | ... | b_q]\]
where $\gamma$ draws from all $(p,q)$-shuffles, and $\tilde{\gamma}$ is the lift of $\gamma$ to $A_{p+q}$.
\end{defn}

The action of the braid $\tilde{\gamma}$ on an element in $V^{\otimes j} \otimes W \otimes V^{\otimes q-j-1}$ is described in Corollary~\ref{cor:ind_rep_sep_lbvs}. The formulae of both the left and the right multiplications of $\mathfrak{M}$ resemble that of the quantum shuffle product in $\mathfrak{A}(V)$. In particular, the definition of $\mathfrak{M}$ satisfies the associative requirement of multiplication for an $\mathfrak{A}(V)$-bimodule.


\subsection{Homology of type-\textit{B} Artin groups}\label{ssec:main}

Write $\epsilon$ for the braided $k$-module $\epsilon = k$ with braiding on $\epsilon^{\otimes 2} \cong k$ given by multiplication by $-1$. For a general braided $k$-module $(V,\sigma)$, write $V_\epsilon = V \otimes \epsilon$ with braiding twisted by the sign on $\epsilon$.

Let $(V,W,\sigma,\tau,\varphi)$ be a separable left-braided vector space, then its twisted dual $(V_\epsilon, W, \sigma_\epsilon, \tau, \varphi_\epsilon)$ also forms a separable left-braided vector space with $\varphi_\epsilon := -\varphi$. Let $\mathfrak{A} := \mathfrak{A}(V_\epsilon)$ be the quantum shuffle algebra generated by the twisted dual $V_\epsilon$, and $\mathfrak{M}:=\mathfrak{M}(V_\epsilon,W)$ be the $\mathfrak{A}$-bimodule defined from the left-braided vector space $(V_\epsilon, W, \sigma_\epsilon, \tau, \varphi_\epsilon)$ as described in Definition~\ref{defn:M}. Let $\mathfrak{I}$ be the augmentation ideal of $\mathfrak{A}$, consisting of elements of positive degree. The following proposition shows the relationship between the algebraic structures we developed in the previous subsection and the cellular chain complex of configuration spaces with twisted coefficients.

\begin{prop}\label{prop:Fn=Cn}
There is an isomorphism of chain complexes
\[F_{*,n+1}(\mathfrak{M},\mathfrak{I}) \cong C(n+1)_{*+n+1} \otimes \mathrm{Ind}^{A_{n+1}}_{B_n}(V^{\otimes n}\otimes W).\]
\end{prop}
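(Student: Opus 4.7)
The plan is to construct an explicit bijection at the level of graded $k$-modules indexed by compositions of $n+1$, and then verify that the differentials agree term by term. The isomorphism is essentially a change of notation: on both sides, a generator is specified by a composition $\lambda = (\lambda_1,\ldots,\lambda_q)$ of $n+1$ together with a choice of $n+1$ letters (of which exactly one lives in $W$ and the rest in $V$).

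First I would decompose the left-hand side. Since $\mathfrak{I} = \bigoplus_{m\ge 1} V_\epsilon^{\otimes m}$ and $\mathfrak{M}_r = \bigoplus_{0\le j\le r-1} V_\epsilon^{\otimes j}\otimes W\otimes V_\epsilon^{\otimes r-j-1}$, the summand $\mathfrak{I}^{\otimes i-1}\otimes \mathfrak{M}\otimes \mathfrak{I}^{\otimes q-i}$ in internal degree $n+1$ splits into a sum over compositions $\lambda$ of $n+1$ with $q$ parts (recording the degree of each slot) and, inside the $i$-th slot of degree $\lambda_i$, a position for $W$. Running over all $i$, these data are equivalent to specifying $\lambda$ and the overall position of $W$ in $\{1,\ldots,n+1\}$. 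On the right, $C(n+1)_{q+n+1}$ is by definition spanned by compositions of $n+1$ with $q$ parts, and tensoring with $\mathrm{Ind}^{A_{n+1}}_{B_n}(V^{\otimes n}\otimes W) \cong \bigoplus_{i=1}^{n+1} V^{\otimes i-1}\otimes W\otimes V^{\otimes n-i+1}$ (Corollary~\ref{cor:ind_rep_sep_lbvs}) records exactly the same data. The isomorphism on generators is declared by bracketing $[v_1|\cdots|w_i|\cdots|v_{n+1}]$ into blocks of sizes $\lambda_1,\ldots,\lambda_q$, placing the block containing $w_i$ into $\mathfrak{M}$ and the others into $\mathfrak{I}$, while identifying $V_\epsilon$ with $V$ as underlying vector spaces.

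The main work is to check that this bijection intertwines the two differentials. Both differentials act by merging/multiplying the $m$-th and $(m+1)$-st factors with the global sign $(-1)^{m-1}$, so the positional signs match automatically. The shuffle term also matches: a chosen lift $\tilde\gamma$ of a $(p,q)$-shuffle is the positive braid with precisely $|\gamma|$ crossings of Figure~\ref{fig:shuf_lift}, so applying $\tilde\gamma$ via the $\epsilon$-twisted braiding $\sigma_\epsilon = -\sigma$ yields exactly $(-1)^{|\gamma|}$ times its action via $\sigma$. Thus multiplication in $\mathfrak{A} = \mathfrak{A}(V_\epsilon)$ reproduces the Fox-Neuwirth shuffle sum $\sum_\gamma(-1)^{|\gamma|}\tilde\gamma$ acting on $V^{\otimes(\lambda_i+\lambda_{i+1})}$. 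This is the same verification that underlies Theorem~\ref{thm:etw} in \cite{etw17}.

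The point requiring real care is the case where the slot being multiplied is the $\mathfrak{M}$-slot (or where one of the two neighboring slots is $\mathfrak{M}$). Here I would use the explicit bimodule structure from Definition~\ref{defn:M}: when a lifted shuffle $\tilde\gamma$ is required to move a strand from $V_\epsilon$ past the $W$-strand, the braiding used is the one induced by $\varphi_\epsilon$ and $\tau$ rather than by $\sigma$. This must be matched, on the right-hand side, with the $A_{n+1}$-action on $\mathrm{Ind}^{A_{n+1}}_{B_n}(V^{\otimes n}\otimes W)$ described in Corollary~\ref{cor:ind_rep_sep_lbvs}, where a strand passing the $W$-factor acts via $\varphi$ or $\tau\varphi^{-1}$. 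This is precisely where separability, via Proposition~\ref{prop:comm_diag_cond}, is indispensable: it guarantees that the action of a lifted shuffle decomposes consistently into generator actions regardless of whether those generators operate on the $V$-only tensor factors or on the $W$-factor. With that ingredient in place, the verification reduces to comparing the formulas of Corollary~\ref{cor:ind_rep_sep_lbvs} and Definition~\ref{defn:M} generator by generator, which is a direct diagrammatic check.
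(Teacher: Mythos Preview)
Your proposal is correct and follows essentially the same approach as the paper: both establish the $k$-module isomorphism by matching compositions of $n+1$ with the overall position of the $W$-factor, and both identify the $\epsilon$-twist as the source of the shuffle signs $(-1)^{|\gamma|}$, with the crucial appeal to separability (the commutativity of Diagram~\ref{diag:separable}, equivalently Proposition~\ref{prop:comm_diag_cond}) to reconcile the braid action on a tensor subfactor in $F_*(\mathfrak{M},\mathfrak{I})$ with the action on the full induced representation. Your write-up is slightly more explicit about the sign bookkeeping, but the logical structure is identical.
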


\begin{proof}
Recall that the induced representation $\mathrm{Ind}^{A_{n+1}}_{B_n} (V^{\otimes n}\otimes W)$ is isomorphic to the direct sum $\bigoplus_{i=1}^{n+1} V^{\otimes i-1} \otimes W \otimes V^{\otimes n-i+1}$. Observe that $F_{q,n+1}(\mathfrak{M},\mathfrak{I})$ consists of all spaces of the form
\[ V_\epsilon^{\otimes \lambda_1} \otimes ... \otimes V_\epsilon^{\otimes \lambda_{i-1}} \otimes (V_\epsilon^{\otimes j} \otimes W \otimes V_\epsilon^{\otimes \lambda_i - j - 1}) \otimes V_\epsilon^{\otimes \lambda_{i+1}} \otimes ... \otimes V_\epsilon^{\lambda_{q}} \]
where $\sum \lambda_m = n+1$. This is an ordered partition of $n+1$ with $q$ parts labelled by an element of $V_\epsilon^{\iota-1} \otimes W \otimes V_\epsilon^{\otimes n+1-\iota} \cong V^{\iota-1} \otimes W \otimes V^{\otimes n+1-\iota}$, where $\iota = j+1+\sum_{m=1}^{i-1} \lambda_m$ is the overall position of the factor $W$ in the tensor product. Hence there is an isomorphism of $k$-modules between $F_{q,n+1}(\mathfrak{M},\mathfrak{I})$ and $C(n+1)_{q+n+1} \otimes \mathrm{Ind}^{A_{n+1}}_{B_n}(V^{\otimes n}\otimes W)$.

There are two main pieces of data in the boundary of an element $\lambda \otimes t$ in the chain complex $C(n+1)_* \otimes \mathrm{Ind}^{A_{n+1}}_{B_n}(V^{\otimes n}\otimes W)$ (stated in its general form prior to Theorem~\ref{thm:etw_fnf}): the coarsening $\rho^i$ of $\lambda$ and the signed sum over all $(\lambda_i,\lambda_{i+1})$-shuffles of the actions of their lifts on $t$. Both are encapsulated in the differential of $F_{*,n+1}(\mathfrak{M},\mathfrak{I})$: the coarsening $\rho^i$ is encoded in the choice of two multiplied elements, and the sum of the braid actions is contained in the quantum shuffle product of $\mathfrak{A}$ or the multiplication of $\mathfrak{M}$ by $\mathfrak{A}$. Observe that in the differential of $F_{*,n+1} (\mathfrak{M},\mathfrak{I})$, the braids act only on a tensor subfactor of $V^{\otimes \iota-1} \otimes W \otimes V^{\otimes n+1-\iota}$, whereas the corresponding braids act on the isomorphic image $\alpha_\iota(V^{\otimes n} \otimes W)$ of this full factor in the differential of $C(n+1)_{*+n+1} \otimes \mathrm{Ind}^{A_{n+1}}_{B_n}(V^{\otimes n}\otimes W)$. These braid actions match precisely due to the commutativity of Diagram~\ref{diag:separable}, which is equivalent to the separability of the left-braided vector space $(V_\epsilon, W)$ by Proposition~\ref{prop:comm_diag_cond}. The signs coming from $\epsilon$ encode the boundary orientations on cells in the Fox-Neuwirth/Fuks model. Via these identifications, the differentials of $F_{*,n+1}(\mathfrak{M},\mathfrak{I})$ and $C(n+1)_{*+n+1} \otimes \mathrm{Ind}^{A_{n+1}}_{B_n}(V^{\otimes n}\otimes W)$ are precisely the same formula, which shows their isomorphism as chain complexes.
\end{proof}

We are ready to prove the main result of the paper.

\begin{cor}\label{cor:main}
Let $\mathfrak{A} = \mathfrak{A}(V^*_\epsilon)$ and $\mathfrak{M} = \mathfrak{M}(V^*_\epsilon, W^*)$. Then there is an isomorphism
\[H_* (B_n; V^{\otimes n} \otimes W) \cong \mathrm{Ext}^{n-*, n+1}_{\mathfrak{A}^e} (\mathfrak{M},k).\]
\end{cor}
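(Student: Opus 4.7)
\textit{Proof proposal.} The plan is to chain together the three main ingredients developed earlier in this section: the rewritten Ellenberg--Tran--Westerland identification (Corollary~\ref{cor:etw_rewrite}), the chain-level comparison (Proposition~\ref{prop:Fn=Cn}), and the homological computation (Corollary~\ref{cor:hom_Fn}). First I would apply Corollary~\ref{cor:etw_rewrite} to the $B_n$-representation $L = V^{\otimes n} \otimes W$, identifying the dual as $L^* \cong (V^*)^{\otimes n} \otimes W^*$, to obtain
\[ H_q(B_n; V^{\otimes n}\otimes W) \;\cong\; H_{2n+2-q}\bigl(C(n+1)_*\otimes \mathrm{Ind}^{A_{n+1}}_{B_n}((V^*)^{\otimes n}\otimes W^*)\bigr)^*.\]
This reduces the computation to the homology of a Fox--Neuwirth/Fuks complex with coefficients in the induced representation of the dualized $B_n$-representation.

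Next I would check that the dual pair $(V^*, W^*)$, with dual braidings $\sigma^*$ and $\tau^*$ and dual separated braiding $\varphi^*$, again forms a separable left-braided vector space, so that the sign-twist $(V^*_\epsilon, W^*, \sigma^*_\epsilon, \tau^*, -\varphi^*)$ does as well; this is a formal diagram check since the braid equations and the two axioms of Definition~\ref{defn:sep_lbvs} transpose to themselves under dualization. With this in hand, Proposition~\ref{prop:Fn=Cn} applied to $(V^*, W^*)$ yields an isomorphism of chain complexes
\[ F_{*, n+1}(\mathfrak{M}, \mathfrak{I}) \;\cong\; C(n+1)_{*+n+1}\otimes \mathrm{Ind}^{A_{n+1}}_{B_n}((V^*)^{\otimes n}\otimes W^*),\]
with $\mathfrak{A} = \mathfrak{A}(V^*_\epsilon)$ and $\mathfrak{M} = \mathfrak{M}(V^*_\epsilon, W^*)$ exactly as in the statement of the corollary.

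Substituting and reindexing, the homological degree $2n+2-q$ on the right-hand side of the first display matches $F_{n+1-q,\,n+1}(\mathfrak{M},\mathfrak{I})$. Corollary~\ref{cor:hom_Fn} then identifies
\[ H_{n+1-q}\bigl(F_{*,n+1}(\mathfrak{M},\mathfrak{I})\bigr) \;\cong\; \mathrm{Tor}^{\mathfrak{A}^e}_{n-q,\,n+1}(\mathfrak{M}, k).\]
Finally, because $k$ is a field, the universal coefficient theorem converts Tor into Ext: dualizing a projective resolution of $\mathfrak{M}$ over $\mathfrak{A}^e$ shows $\mathrm{Tor}^{\mathfrak{A}^e}_{n-q,\,n+1}(\mathfrak{M}, k)^{*} \cong \mathrm{Ext}^{n-q,\,n+1}_{\mathfrak{A}^e}(\mathfrak{M}, k)$, at least degree by degree under mild finite-dimensionality of the bigraded pieces, which holds since each internal degree piece of $F_{*,n+1}(\mathfrak{M},\mathfrak{I})$ is a finite direct sum of finite-dimensional spaces. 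Assembling all four identifications produces the desired isomorphism.

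The main obstacle is the bookkeeping: tracking the Poincar\'e duality shift of $2n+2$, the $n+1$ shift coming from Proposition~\ref{prop:Fn=Cn}, the $-1$ shift relating $F_*(M,I)$ to a projective resolution of $M$, and the internal grading $n+1$ simultaneously, while keeping the sign twist $\epsilon$ consistently on the \emph{dual} side so that the Fox--Neuwirth boundary signs match the quantum shuffle differentials. The genuine algebraic content is the chain-level isomorphism of Proposition~\ref{prop:Fn=Cn}; everything else is standard homological duality and careful reindexing.
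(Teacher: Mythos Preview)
Your proposal is correct and follows essentially the same route as the paper: apply Corollary~\ref{cor:etw_rewrite} with $L = V^{\otimes n}\otimes W$, then Proposition~\ref{prop:Fn=Cn}, then Corollary~\ref{cor:hom_Fn}, and finally Tor--Ext duality over $k$, tracking the degree shifts $2n+2-q \mapsto n+1-q \mapsto n-q$ exactly as you describe. Your extra remarks on checking separability of the dual and finite-dimensionality for the Tor--Ext step are valid refinements that the paper leaves implicit.
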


\begin{proof}
We invoke Corollary~\ref{cor:etw_rewrite} for the $B_n$-representation $L = V^{\otimes n} \otimes W$, Proposition~\ref{prop:Fn=Cn}, Corollary~\ref{cor:hom_Fn}, and the duality of the Tor and Ext functors subsequently to get the desired isomorphism
\[\begin{split}
    H_*(B_n; V^{\otimes n} \otimes W) & \cong H_{2n+2-*} \Big( C(n+1)_* \otimes \mathrm{Ind}^{A_{n+1}}_{B_n} ((V^*)^{\otimes n} \otimes W^*) \Big)^*\\
    & \cong H_{n+1-*} (F_{*,n+1}(\mathfrak{M}, \mathfrak{I}))^*\\
    & \cong \mathrm{Tor}^{\mathfrak{A}^e}_{n-*, n+1}(\mathfrak{M}, k)^* \cong \mathrm{Ext}_{\mathfrak{A}^e}^{n-*, n+1} (\mathfrak{M}, k).
\end{split}\]
\end{proof}

This result creates a new avenue to compute the homology of type-{\it B} Artin groups with coefficients arising from braided vector spaces by studying the cohomology of bimodules over quantum shuffle algebras using tools from homological algebra. In the next section, we demonstrate the power of the main theorem by giving a complete characterization of the homology of these groups with coefficients in one-dimensional representations over a field of characteristic $0$. In general, one can reduce the computation of the cohomology of $\mathfrak{M}$ to that of the cohomology of $\mathfrak{A}$ using a series of spectral sequences, which is still fairly difficult to compute. However, in certain specific cases, a bound on the cohomology of quantum shuffle algebras was given by Ellenberg--Tran--Westerland \cite{etw17}. In work in progress, we leverage their result to give a bound on the cohomology of $\mathfrak{M}$ and study variations of Malle's conjecture for function fields.


\subsection{Example: Module over the quantum divided power algebra}\label{ssec:1dim}

For the only concrete computation in this paper, we revisit Example~\ref{exmp:V=W=k} when the base field $k$ has characteristic $0$. Let the left-braided vector space $(V,W,\sigma,\tau)$ be composed of one-dimensional $k$-vector spaces $V=k$ and $W = k\{w\}$, with braidings $\sigma$ on $V\otimes V = k$ and $\tau$ on $V\otimes W \cong k$ given by multiplications by $q$ and $p$ respectively, for some $p, q \in k^\times$. Thus the braid action of $B_n$ on the representation $V^{\otimes n} \otimes W \cong k$ is given by $\sigma_i \mapsto q$ for all $1 \le i \le n-1$ and $\tau_n \mapsto p$. In this case, the quantum shuffle algebra $\mathfrak{A} = \mathfrak{A}(V)$ for the braided vector space $(V,\sigma)$ is generated (as a $k$-module) by the classes $x_n = [1|...|1]_n$, where there are $n$ occurrences of $1$. It has been shown that the algebra $\mathfrak{A}$ is isomorphic as graded rings to the quantum divided power algebra $\Gamma_q[x]$ \cite{etw17}, whose structure has been previously studied by \cite{cal06}.

\begin{defn}
    The \textit{quantum divided power algebra} $\Gamma_q[x]$ associated to $q \in k^\times$ is additively generated by elements $x_n$ in degree $n$, equipped with the product
    \[ x_n \star x_m := {n+m \choose m}_q x_{n+m} \]
    where the quantum binomial coefficient is defined by
    \[ {a \choose b}_q = \dfrac{[a]_q [a-1]_q \cdots [a-b+1]_q}{[b]_q[b-1]_q \cdots [1]_q}; \quad \text{here} \quad [r]_q = \dfrac{1-q^r}{1-q} = 1+ q + \cdots + q^{r-1}. \]
\end{defn}

The isomorphism between $\Gamma_q[x]$ and $\mathfrak{A}$ sends the class $x_n$ to $[1|...|1]_n$ in $\mathfrak{A}$ \cite{etw17}. The following identification of the algebra $\Gamma_q[x]$ is due to Callegaro.

\begin{prop}[\cite{cal06}]\label{prop:q_div_pow_alg}
If $q$ is not a root of unity in $k$, then there is an isomorphism $\Gamma_q[x] \cong k[x_1]$.  If $q$ is a primitive $m^\mathrm{th}$ root of unity, then 
\[\Gamma_q[x] = k[x_1]/x_1^{m} \otimes \Gamma[x_m].\]
\end{prop}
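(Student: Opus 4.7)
The plan is to construct explicit graded $k$-algebra isomorphisms to $\Gamma_q[x]$ from the stated right-hand sides. The key ingredient in both cases is the inductive formula
\[ x_1^{\star n} = [n]_q!\, x_n, \qquad [n]_q! := [n]_q [n-1]_q \cdots [1]_q, \]
which follows by induction on $n$ from $x_1 \star x_{n-1} = \binom{n}{n-1}_q x_n = [n]_q\, x_n$. This reduces the proposition to tracking when the factors $[r]_q$ vanish.

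In the generic case ($q$ not a root of unity), every $[r]_q \neq 0$, hence $[n]_q! \in k^\times$ for all $n$, and the formula gives $x_n = [n]_q!^{-1}\, x_1^{\star n}$. The $k$-algebra map $\phi : k[y] \to \Gamma_q[x]$, $y \mapsto x_1$, is therefore surjective, and it is injective because $\phi(y^n) = [n]_q!\, x_n$ is a nonzero scalar multiple of the basis vector $x_n$ and distinct powers $y^n$ land in distinct homogeneous components. Hence $\phi$ is a graded isomorphism.

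In the root-of-unity case ($q$ a primitive $m$-th root of unity), the main technical input is the quantum Lucas identity
\[ \binom{am+b}{cm+d}_q = \binom{a}{c}\binom{b}{d}_q \qquad (0 \leq b, d < m), \]
which I would derive by expanding the Gauss identity $\prod_{i=0}^{n-1}(1 + q^i y) = \sum_k \binom{n}{k}_q q^{\binom{k}{2}} y^k$ for $n = am+b$ and grouping the factors into $a$ blocks of $m$ (each becoming $1 + y^m$ up to a power of $q$) together with the remaining $b$ factors. Two consequences follow: $x_m^{\star a} = a!\, x_{am}$ (from $\binom{am}{m}_q = a$ applied iteratively) and $x_1^{\star b} \star x_{am} = [b]_q!\, x_{am+b}$ whenever $0 \leq b < m$ (from $\binom{am+b}{b}_q = 1$). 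Moreover, since $[m]_q = 0$, we get $x_1^{\star m} = 0$. Using commutativity of $\Gamma_q[x]$, one assembles these facts into a graded algebra map
\[ \Phi : k[y]/(y^m) \otimes \Gamma[x_m] \longrightarrow \Gamma_q[x], \qquad y^b \otimes x_m^{(a)} \longmapsto x_1^{\star b} \star x_{am} = [b]_q!\, x_{am+b}. \]
This $\Phi$ is a well-defined algebra homomorphism (compatibility with multiplication in $k[y]/(y^m) \otimes \Gamma[x_m]$ follows from commutativity and the above computations, with both sides vanishing whenever the cumulative $y$-degree exceeds $m-1$ thanks to $x_1^{\star m} = 0$), and it is bijective because it sends the $k$-basis $\{\, y^b \otimes x_m^{(a)} : 0 \leq b < m,\, a \geq 0 \,\}$ to nonzero scalar multiples of the basis $\{x_n\}_{n \geq 0}$ of $\Gamma_q[x]$, each $x_n$ hit exactly once (via the unique decomposition $n = am+b$ with $0 \leq b < m$).

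I expect the only nontrivial obstacle to be establishing the quantum Lucas identity cleanly; once in hand, all remaining verifications are routine bookkeeping with scalars and degrees, and the argument cleanly splits along the dichotomy of whether the quantum integers $[r]_q$ are all invertible or periodically vanish.
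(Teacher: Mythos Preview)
The paper does not supply its own proof of this proposition; it is quoted as a result of Callegaro \cite{cal06} and used as a black box in the computations of Section~\ref{ssec:1dim}. Your argument is correct and is essentially the standard one: the formula $x_1^{\star n} = [n]_q!\,x_n$ handles the generic case immediately, and in the root-of-unity case the quantum Lucas identity $\binom{am+b}{cm+d}_q = \binom{a}{c}\binom{b}{d}_q$ gives exactly the multiplicative relations needed to check that $\Phi$ is a well-defined algebra map hitting each $x_n$ by a nonzero scalar. Your verification of multiplicativity when $b+b' \ge m$ via commutativity and $x_1^{\star m}=0$ is fine; an equivalent check is to note that Lucas gives $\binom{(a+a'+1)m+c}{am+b}_q = \binom{a+a'+1}{a}\binom{c}{b}_q = 0$ since $c=b+b'-m<b$.
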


As analyzed in Example~\ref{exmp:V=W=k}, the left-braided vector space $(V,W,\sigma,\tau)$ is separable with the separated braiding $\varphi'$ given simply by permuting tensor factors. Observe that this separated braiding is not unique: multiplying $\varphi'$ by an arbitrary unit gives another separated braiding. For generality, we will choose the separated braiding to be $\varphi := u\varphi'$ for some $u \in k^{\times}$.

Recall that the $\mathfrak{A}$-bimodule $\mathfrak{M}$ is defined by
\[ \mathfrak{M} = \displaystyle \bigoplus_{n \ge 1} \bigoplus_{1 \le i \le n} V^{\otimes i-1} \otimes W \otimes V^{\otimes n-i}.\] In this case, $\mathfrak{M}$ as a $k$-module is generated by the classes $y_{i,n} := [1|...|1|w^{(i)}|1|...|1]_n$ (where $i$ records the position of $w$) for all $n \ge 1$ and $1 \le i \le n$; in particular, denote $y_n := y_{n,n} = [1|...|1|w]_n$. Corollary~\ref{cor:ind_rep_sep_lbvs} specifies the action of $A_n$ on these generators as follows:
\[ \sigma_m (y_{i,n}) = \begin{cases}
q y_{i,n} \hspace{.5in} \hfill m \ne i-1, i \\
u y_{i-1,n} \hfill m = i-1 \\
pu^{-1} y_{i+1,n} \hfill m = i.
\end{cases}\]
For the remainder of this section, we will study the structure of the module $\mathfrak{M}$ as an $\mathfrak{A}$-bimodule and compute its homology.

Consider the left multiplicative structure of $\mathfrak{M}$. Recall that $\{y_n, y_{n-1,n}, ..., y_{1,n} \}$ forms a basis for the degree-$n$ homogeneous subspace $\mathfrak{M}_n$ of $\mathfrak{M}$.

\begin{lem}\label{lem:xy_to_y}
For $n,m \ge 1$,
\[ x_m y_n = \sum^m_{h=0} u^{m-h} q^{(n-1)(m-h)} {n-1+h \choose h }_q y_{n+h,n+m}.\]
\end{lem}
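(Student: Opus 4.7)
The plan is to expand $x_m \star y_n$ via the explicit formula for left multiplication in $\mathfrak{M}$ from Definition~\ref{defn:M}, which gives
\[ x_m \star y_n = \sum_{\gamma} \tilde{\gamma} [1|1|\cdots|1|w], \]
where the sum ranges over $(m,n)$-shuffles $\gamma$, and the bracketed element has $m+n-1$ copies of $1$ followed by $w$ at position $m+n$. Since both $V$ and $W$ are one-dimensional, every braid generator acts as a scalar (per Corollary~\ref{cor:ind_rep_sep_lbvs}), so $\tilde{\gamma}$ itself acts as a product of scalars, one per crossing in any braid word representing $\tilde{\gamma}$, and this product is well defined because scalars commute.

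I would next group the shuffles by the parameter $h := \gamma(n') - n \in \{0,1,\ldots,m\}$, where $n'$ denotes the last strand of the second group (the one carrying $w$). Because each group preserves its internal order, exactly $h$ of the $m$ first-group strands end up at post-shuffle positions at most $n+h-1$, while the remaining $m-h$ end up at positions $n+h+1,\ldots,n+m$; the $w$-strand lands at position $n+h$. Thus $\tilde{\gamma}[1|\cdots|1|w]$ is a scalar multiple of $y_{n+h,n+m}$, and the task reduces to summing this scalar over all $\gamma$ with $\gamma(n') = n+h$.

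To read off the scalar, I would decompose the crossings in $\tilde{\gamma}$ into three types: (i) inversions pairing the $w$-strand with the $m-h$ first-group strands that end up to the right of $w$; (ii) inversions pairing those same $m-h$ stuck-at-the-end first-group strands with the $n-1$ non-$w$ second-group strands (all of which land to the left of $w$); and (iii) inversions pairing the $h$ first-group strands ending before $w$ with the $n-1$ non-$w$ second-group strands. Since $w$ begins at the rightmost position and migrates leftward, every type-(i) crossing is of the form $[v|w] \mapsto \varphi(v \otimes w)$ and hence contributes $u$, yielding $u^{m-h}$ in total. The type-(ii) inversions are forced (every such pair is inverted by a position argument), each contributing a factor of $q$, for a combined $q^{(n-1)(m-h)}$. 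Summing the type-(iii) contributions over all shuffles with $\gamma(n') = n+h$ amounts to summing $q^{\mathrm{inv}(\gamma')}$ over $(h,n-1)$-shuffles $\gamma'$ on the first $n+h-1$ positions, which equals $\binom{n-1+h}{h}_q$ by the classical $q$-binomial identity.

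Combining these three factors and summing over $h$ recovers the stated formula. The main subtlety will be verifying that every $w$-crossing contributes $u$ rather than $pu^{-1}$, which rests on the monotone leftward trajectory of $w$ in the shuffle lift together with the $m = i-1$ branch of the action in Corollary~\ref{cor:ind_rep_sep_lbvs}; once that is settled, the remainder is a routine exercise in counting shuffle inversions.
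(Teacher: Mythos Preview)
Your argument is correct and essentially mirrors the paper's proof: both group the $(m,n)$-shuffles by $h=\gamma(m+n)-n$ and extract the three factors $u^{m-h}$, $q^{(n-1)(m-h)}$, and $\binom{n-1+h}{h}_q$. The only cosmetic difference is that the paper packages the first two factors via the $((p,h),q,j)$-shuffle decomposition of Proposition~\ref{prop:phqj_decomp} (writing $\gamma=\beta\omega$ and computing $\tilde\omega(y_{n+m})$ explicitly), whereas you obtain them by directly sorting the inversions of $\tilde\gamma$ into three strand-pair types; the content is the same.
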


The proof of this lemma involves the concept of $((p,h),q,j)$-\textit{shuffles}, which will be later developed in Section~\ref{ssec:FNF_Cx}, in particular Definition~\ref{defn:pqhj_shuf} and Proposition~\ref{prop:phqj_decomp}. For convenience, we will apply these results and defer their proofs to the discussion in that section.

\begin{figure}[t]
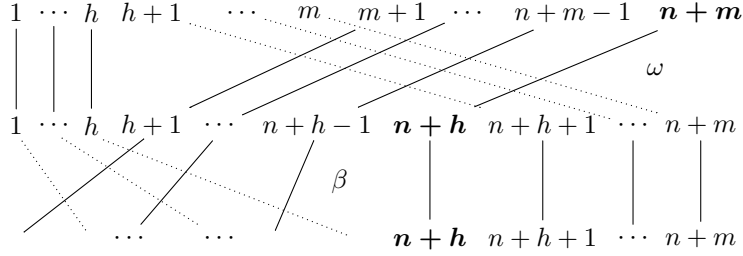

    \centering
    \[ 
	\xy
		{\ar@{-} (0,15)*+{1}; (0,30)*+{1} };
		{\ar@{-} (5,15)*+{\cdots}; (5,30)*+{\cdots} }; 
		{\ar@{-} (10,15)*+{h}; (10,30)*+{h} }; 
		{\ar@{-} (18,15)*+{h+1}; (50,30)*+{m+1} };
		{\ar@{-} (27,15)*+{\cdots}; (60,30)*+{\cdots} };
		{\ar@{-} (40,15)*+{n+h-1}; (74,30)*+{n+m-1} };
		{\ar@{-} (55,15)*+{\boldsymbol{n+h}}; (91,30)*+{\boldsymbol{n+m}} };
		{\ar@{..} (18,30)*+{h+1}; (70,15)*+{n+h+1} };
		{\ar@{..} (30,30)*+{\cdots}; (82,15)*+{\cdots} };
		{\ar@{..} (39,30)*+{m}; (91,15)*+{n+m} };
		{\ar@{-} (55,14)*+{}; (55,0)*+{\boldsymbol{n+h}} };
		{\ar@{..} (0,14)*+{}; (10,0)*+{} };
		{\ar@{..} (5,14)*+{}; (27,0)*+{\cdots} }; 
		{\ar@{..} (10.5,13.5)*+{}; (45,0)*+{} };
		{\ar@{-} (18,14)*+{}; (0,0)*+{} };
		{\ar@{-} (40,14)*+{}; (34,0)*+{} };
		{\ar@{-} (27,14)*+{}; (15,0)*+{\cdots} };
		{\ar@{-} (70,14)*+{}; (70,0)*+{n+h+1} };
		{\ar@{-} (91,14)*+{}; (91,0)*+{n+m} };
		{\ar@{-} (82,14)*+{}; (82,0)*+{\cdots} };
		{(85,22.5)*+{\omega}};
		{(43,7.5)*+{\beta}};
	\endxy
\]
    \caption{Decomposition of an $((m,h),n,n-1)$-shuffle $\gamma$ into a sequence of two permutations: the permutation $\omega$, followed by an $(h,n-1)$-shuffle $\beta$ on the integer interval $\llbracket 1, n+h-1 \rrbracket$. The lift $\tilde{\gamma}$ is the product of lifts of the component permutations to $B_{n+m}$, i.e. $\tilde{\gamma} = \tilde{\beta} \tilde{\omega}$. The factor $w$ (in bold) is sent from $n+m$ to $n+h$.}
    \label{fig:phqj_decomp_1dim}
\end{figure}

\begin{proof}
The left multiplication formula of $\mathfrak{M}$ gives
\[ x_m y_n = [1|...|1]_m \star [1|...|1|w]_n = \sum_\gamma \tilde{\gamma}[1|...|1|w]_{n+m} = \sum_\gamma \tilde{\gamma} (y_{n+m}) \]
where $\gamma$ draws from all $(m,n)$-shuffles. Given a shuffle $\gamma$, the summand of $\mathfrak{M}_{n+m}$ that $\tilde{\gamma} (y_{n+m})$ belongs to is determined by the image of $n+m$ under the shuffle $\gamma$, i.e. $\tilde{\gamma}(y_{n+m}) \in V^{\otimes i-1} \otimes W \otimes V^{\otimes n+m-i}$ where $i = \gamma(n+m)$. Such a shuffle $\gamma$ belongs to a family of $((p,h),q,j)$-shuffles; in particular, $\gamma$ is an $((m,h),n,n-1)$-shuffle, for $h = i-n$. The decomposition of $((p,h),q,j)$-shuffles then states that $\gamma$ can be uniquely decomposed into a sequence of three permutations: a permutation $\omega$ that sends the integer interval $\llbracket h+1, m \rrbracket$ to $\llbracket n+h+1, n+m \rrbracket$ (while preserving its order) and $n+m$ to $n+h$, followed by an $(h,n-1)$-shuffle $\beta$ on $\llbracket 1, h+n-1 \rrbracket$ and an $(m-h, 0)$-shuffle on $\llbracket n+h+1, n+m \rrbracket$ (i.e. the identity). Observe that the lift $\tilde{\omega}$ can be written as the product of $n$ braids, each of which represents the lift of the shuffle that moves $m-h$ points in the original interval $\llbracket h+1, m \rrbracket$ past the next point on the right (see Figure~\ref{fig:phqj_decomp_1dim}). The action of the first $n-1$ such braids each results in the multiplication by $q^{m-h}$, while the last braid element is precisely $\alpha_{n+h}$ which acts by $\varphi_{n+h,n+m} = \varphi_{n+h} ... \varphi_{n+m-1}$ on $y_{n+m}$. It follows that
\[ \begin{split}
    x_m y_n & = \sum_\gamma \tilde{\gamma} (y_{n+m}) = \sum_{h=0}^m \sum_\beta q^{(n-1)(m-h)} \tilde{\beta} \varphi_{n+h,n+m}(y_{n+m}) \\
    & = \sum_{h=0}^m u^{m-h} q^{(n-1)(m-h)} {n-1+h \choose h}_q y_{n+h,n+m}
\end{split} \]
where $\beta$ draws from all $(h, n-1)$-shuffles. The fact that the weighted sum over all $(h,n-1)$-shuffles $\beta$ of $q^{cr(\beta)}$, where $cr(\beta)$ is the number of crossings in $\beta$, computes the quantum binomial coefficient is well established (see, e.g., Prop. 1.7.1 in \cite{sta12}).
\end{proof}

\begin{prop}\label{prop:M_fr_left}
The set $\{x_{n-k} y_k\}^{n}_{k=1}$ forms a basis for $\mathfrak{M}_n$. Consequently, $\mathfrak{M}$ is a free left $\mathfrak{A}$-module with respect to the basis $\mathcal{Y} = \{y_1, y_2, ... \}$.
\end{prop}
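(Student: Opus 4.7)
The strategy is a direct application of Lemma~\ref{lem:xy_to_y}: compute each $x_{n-k}y_k$ explicitly in the standard basis $\{y_{i,n}\}_{i=1}^n$ of $\mathfrak{M}_n$, then show that the transition matrix is triangular with invertible diagonal.

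More concretely, substituting $m = n-k$ and $n \mapsto k$ into Lemma~\ref{lem:xy_to_y} gives
\[ x_{n-k} y_k = \sum_{h=0}^{n-k} u^{n-k-h} q^{(k-1)(n-k-h)} \binom{k-1+h}{h}_q y_{k+h,\,n}, \]
so $x_{n-k} y_k$ lies in the $k$-span of $\{y_{k,n}, y_{k+1,n}, \ldots, y_{n,n}\}$, with leading coefficient (the $h=0$ term) equal to $u^{n-k} q^{(k-1)(n-k)} \in k^\times$. Ordering the two sets as $(x_{n-1}y_1, x_{n-2}y_2, \ldots, x_1 y_{n-1}, y_n)$ and $(y_{1,n}, y_{2,n}, \ldots, y_{n,n})$, the resulting $n \times n$ transition matrix is upper triangular with diagonal entries in $k^\times$, hence invertible. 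Since $\{y_{i,n}\}_{i=1}^n$ is a $k$-basis of $\mathfrak{M}_n$, so is $\{x_{n-k}y_k\}_{k=1}^n$.

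For the global conclusion, note that $\mathfrak{A}_m$ is one-dimensional, spanned by $x_m$ (with the convention $x_0 = 1$), so the left $\mathfrak{A}$-module $\bigoplus_{k \ge 1} \mathfrak{A}\, y_k$ has its degree-$n$ component spanned by exactly $\{x_{n-k}y_k : 1 \le k \le n\}$. The first part shows this spanning set is a basis of $\mathfrak{M}_n$, hence the natural map $\bigoplus_{k \ge 1} \mathfrak{A}\, y_k \to \mathfrak{M}$ is an isomorphism of left $\mathfrak{A}$-modules in every internal degree, and therefore an isomorphism of graded left $\mathfrak{A}$-modules. This gives the freeness of $\mathfrak{M}$ over $\mathfrak{A}$ with basis $\mathcal{Y}$.

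There is no real obstacle here once Lemma~\ref{lem:xy_to_y} is in hand; the only nuance is the bookkeeping to recognize the triangularity, which comes out of the observation that the $h=0$ term of the lemma (corresponding to the $(m,n)$-shuffle that leaves the position of $w$ at the rightmost slot) has coefficient $1$ in the $q$-binomial, and is multiplied only by units $u^{n-k}$ and $q^{(k-1)(n-k)}$. The symmetric statement for the right module structure, which will presumably be needed later, should follow by an entirely analogous computation once the right-multiplication analogue of Lemma~\ref{lem:xy_to_y} is written down.
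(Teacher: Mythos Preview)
Your argument is correct and matches the paper's proof essentially verbatim: both invoke Lemma~\ref{lem:xy_to_y}, read off that the change-of-basis matrix between $\{y_{i,n}\}$ and $\{x_{n-k}y_k\}$ is triangular with units $u^{n-k}q^{(k-1)(n-k)}$ on the diagonal, and conclude freeness degree by degree. Two cosmetic slips: with your stated ordering the matrix is lower (not upper) triangular, and the $h=0$ term corresponds to the shuffle sending $w$ to the \emph{leftmost} available slot (position $k$), not the rightmost---neither affects the argument.
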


\begin{proof}
Observe that for any given $1 \le k \le  n$, the coefficient of $y_{i,n}$ in the expansion of $x_{n-k} y_k$ given by the previous lemma vanishes for all $1 \le i \le k-1$, and that of $y_{k,n}$ is $u^{n-k}q^{(k-1)(n-k)}$. Hence there is a change-of-basis matrix from the basis $\{y_{k,n}\}_{k=1}^{n}$ to $\{ x_{n-k} y_k \}_{k=1}^{n}$
\begin{center}
{\renewcommand{\arraystretch}{1.5}%
\begin{tabular}{ c|c c c c c } 
  & $y_{n}$ & $x_1 y_{n-1}$ & $x_2 y_{n-2}$ & $\cdots$ & $x_{n-1} y_1$ \\
 \hline
 $y_{n}$ & 1 & * & * & $\cdots$ & * \\ 
 $y_{n-1,n}$ & 0 & $uq^{n-2}$ & * & $\cdots$ & * \\
 $y_{n-2,n}$ & 0 & 0 & $u^2 q^{2(n-3)}$ & $\cdots$ & * \\
 $\vdots$ & $\vdots$ & $\vdots$ & $\vdots$ & $\ddots$ & $\vdots$ \\
 $y_{1,n}$ & 0 & 0 & 0 & $\cdots$ & $u^{n-1}$ \\
\end{tabular}}
\end{center}
which is upper-triangular with all non-zero diagonal entries. This matrix is therefore invertible, which implies that $\{ x_{n-k} y_k \}_{k=1}^{n}$ indeed forms a basis for $\mathfrak{M}_n$. It follows that the $\mathfrak{A}$-module $\mathfrak{M}$ is generated (as a $k$-module) by the basis $\bigcup_{n=1}^\infty \{ x_{n-k} y_k \}_{k=1}^{n}$. This basis can be freely generated from the set $\mathcal{Y}$ by the left multiplication by generators of the algebra $\mathfrak{A}$.
\end{proof}

We now move onto the right multiplicative structure of $\mathfrak{M}$.

\begin{lem}\label{lem:yx_to_y}
For $n,m \ge 1$,
\[ y_n x_m = \sum^m_{h=0} \left(\frac{p}{u}\right)^h {n-1+h \choose h }_q y_{n+h,n+m}.\]
\end{lem}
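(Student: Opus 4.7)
The plan is to mirror the argument for Lemma~\ref{lem:xy_to_y} in a slightly simpler setting. Applying the right-multiplication formula for $\mathfrak{M}$,
\[ y_n x_m = [1|\cdots|1|w]_n \star [1|\cdots|1]_m = \sum_\gamma \tilde{\gamma}(y_{n, n+m}), \]
where the sum runs over all $(n, m)$-shuffles $\gamma$ and $y_{n, n+m} = [1|\cdots|1|w|1|\cdots|1]_{n+m}$ has $w$ at position $n$. Since $\gamma$ preserves the order of the first $n$ indices, $\gamma(n) \ge n$, so I will partition the sum according to $h := \gamma(n) - n \in \{0, \ldots, m\}$; each term with a fixed value of $h$ lies in $V^{\otimes n+h-1} \otimes W \otimes V^{\otimes m-h}$ and is therefore a scalar multiple of $y_{n+h, n+m}$.

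For fixed $h$, I will invoke a decomposition parallel to Proposition~\ref{prop:phqj_decomp}: each such $\gamma$ factors uniquely as $\gamma = \beta \circ \omega$, where $\omega$ sends $n \mapsto n+h$ and $n+k \mapsto n+k-1$ for $1 \le k \le h$ (fixing all other positions), and $\beta$ is an $(n-1, h)$-shuffle acting on $\llbracket 1, n+h-1 \rrbracket$. The braid lift factors compatibly as $\tilde{\gamma} = \tilde{\beta} \cdot \tilde{\omega}$, with $\tilde{\omega} = \sigma_n \sigma_{n+1} \cdots \sigma_{n+h-1}$---the braid that slides the $w$-strand across the first $h$ right-factor strands one step at a time. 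Iterating the ``$m = i$'' case of Corollary~\ref{cor:ind_rep_sep_lbvs}, each $\sigma_{n+k}$ acts on $y_{n+k, n+m}$ with coefficient $pu^{-1}$, so $\tilde{\omega}(y_{n, n+m}) = (p/u)^h \, y_{n+h, n+m}$. The lift $\tilde{\beta}$ then acts entirely on the first $n+h-1$ positions, all occupied by $1$'s, and leaves $w$ untouched; by the ``$m \ne i-1, i$'' case of Corollary~\ref{cor:ind_rep_sep_lbvs}, each crossing contributes a factor of $q$. Summing $q^{\mathrm{cr}(\beta)}$ over all $(n-1, h)$-shuffles yields the quantum binomial coefficient $\binom{n-1+h}{h}_q$ by the standard identity (Prop.~1.7.1 of \cite{sta12}). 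Combining and summing over $h$ gives the desired formula.

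The main technical point---shared with Lemma~\ref{lem:xy_to_y}---is to justify this shuffle decomposition and to verify that the braid lifts multiply as $\tilde{\gamma} = \tilde{\beta} \tilde{\omega}$ in $A_{n+m}$ under the ``right-strands-in-front'' convention. This is a direct geometric check that carries over from the $((p,h),q,j)$-shuffle analysis used in the earlier proof. Compared with Lemma~\ref{lem:xy_to_y}, the argument here is strictly simpler: the $m-h$ right-factor strands that end to the right of $w$ are stationary throughout, and the $w$-strand's rightward motion is implemented by ordinary braid generators $\sigma_j$ rather than through the last-column braid $\alpha_{n+h}$ acting via the separated braiding $\varphi$. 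Consequently no analogue of the factor $u^{m-h} q^{(n-1)(m-h)}$ from Lemma~\ref{lem:xy_to_y} arises, since $w$ crosses none of the nonmoving $1$-strands of $y_n$ and the last $m-h$ strands of $x_m$ do not move at all.
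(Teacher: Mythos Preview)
Your argument is correct and follows essentially the same route as the paper's proof: partition the $(n,m)$-shuffles by $h=\gamma(n)-n$, decompose each as a fixed permutation $\omega$ followed by an $(n-1,h)$-shuffle $\beta$ on $\llbracket 1,n+h-1\rrbracket$ (the third component $(0,m-h)$-shuffle being trivial, as you note), compute $\tilde\omega(y_{n,n+m})=(p/u)^h y_{n+h,n+m}$ via the $m=i$ case of Corollary~\ref{cor:ind_rep_sep_lbvs}, and sum the $q$-weights of $\tilde\beta$ to the quantum binomial. One small slip: under the paper's left-action convention your lift should read $\tilde\omega=\sigma_{n+h-1}\cdots\sigma_{n+1}\sigma_n$ (rightmost factor applied first), not $\sigma_n\sigma_{n+1}\cdots\sigma_{n+h-1}$; your verbal description of the iteration makes clear you intend the former, so this is purely notational. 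The reference to Proposition~\ref{prop:phqj_decomp} is also slightly off---the relevant decomposition here is that of $(p,(q,h),j)$-shuffles (Proposition~\ref{prop:pqhj_decomp}) with $(p,q,j)=(n,m,n-1)$, though the geometry you describe is exactly right.
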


Similar to that of Lemma~\ref{lem:xy_to_y}, the proof of this lemma requires an introduction of the $(p,(q,h),j)$-\textit{shuffles} that will later be studied extensively in Section~\ref{ssec:FNF_Cx}, particularly Definition~\ref{defn:pqhj_shuf} and Proposition~\ref{prop:pqhj_decomp}.

\begin{proof}
The right multiplication formula of $\mathfrak{M}$ gives
\[ y_n x_m = [1|...|1|w]_n \star [1|...|1]_m = \sum_\gamma \tilde{\gamma}[1|...|1|w^{(n)}|1|...|1]_{n+m} = \sum_\gamma \tilde{\gamma} (y_{n,n+m}) \]
where $\gamma$ draws from all $(n,m)$-shuffles. Similar to the case with left multiplication, a shuffle $\gamma$ in this case belongs to a family of $(p,(q,h),j)$-shuffles; in particular, if $\gamma$ sends $n$ to $i$ ($n \le i \le n+m$), it is classified as an $(n,(m,h),n-1)$-shuffle, for $h = i-n$.
By applying the decomposition of $(p,(q,h),j)$-shuffles, we observe that $\gamma$ can be uniquely decomposed into a sequence of two permutations: a permutation $\omega$ that sends the integer interval $\llbracket n+1, n+h \rrbracket$ to $\llbracket n, n+h-1 \rrbracket$ (while preserving its order) and $n$ to $n+h$, followed by an $(n-1,h)$-shuffle $\beta$ on $\llbracket 1, n+h-1 \rrbracket$ (see Figure~\ref{fig:pqhj_decomp_1dim}). The lift $\tilde{\omega}$ can be written as the product $\tilde{\omega} = \sigma_{n+h-1} \sigma_{n+h-2} ... \sigma_n$. Observe that $\sigma_n (y_{n,n+m}) = p u^{-1} y_{n+1,n+m}$, so by successive multiplication, it follows that $\tilde{\omega} (y_{n,n+m}) = \sigma_{n+h-1} \sigma_{n+h-2} ... \sigma_n (y_{n,n+m}) = (pu^{-1})^h y_{n+h,n+m}$. Thus
\[ \begin{array}{r l}
y_n x_m & = \displaystyle \sum_\gamma \tilde{\gamma} (y_{n,n+m}) = \sum_{h=0}^m \sum_\beta \left(\frac{p}{u}\right)^h \tilde{\beta} y_{n+h,n+m}\\[15pt]
& = \displaystyle \sum_{h=0}^m \left(\frac{p}{u}\right)^h {n-1+h \choose h}_q y_{n+h,n+m}
\end{array} \]
where $\beta$ draws from all $(n-1,h)$-shuffles. The proof is complete.
\end{proof}

\begin{figure}[t]
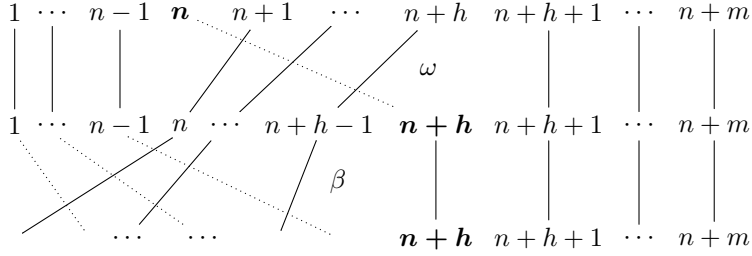

    \centering
    \[ 
	\xy
		{\ar@{-} (0,15)*+{1}; (0,30)*+{1} };
		{\ar@{-} (5,15)*+{\cdots}; (5,30)*+{\cdots} }; 
		{\ar@{-} (14,15)*+{n-1}; (14,30)*+{n-1} }; 
		{\ar@{-} (22,15)*+{n}; (33,30)*+{n+1} };
		{\ar@{..} (22,30)*+{\boldsymbol{n}}; (56,15)*+{\boldsymbol{n+h}} };
		{\ar@{-} (28,15)*+{\cdots}; (44,30)*+{\cdots} };
		{\ar@{-} (40.5,15)*+{n+h-1}; (56,30)*+{n+h} };
		{\ar@{-} (71,15)*+{n+h+1}; (71,30)*+{n+h+1} };
		{\ar@{-} (83,15)*+{\cdots}; (83,30)*+{\cdots} };
		{\ar@{-} (93,15)*+{n+m}; (93,30)*+{n+m} };
		{\ar@{-} (56,14)*+{}; (56,0)*+{\boldsymbol{n+h}} };
		{\ar@{..} (0,14)*+{}; (10,0)*+{} };
		{\ar@{..} (5,14)*+{}; (25,0)*+{\cdots} }; 
		{\ar@{..} (14,14)*+{}; (43,0)*+{} };
		{\ar@{-} (22,14)*+{}; (0,0)*+{} };
		{\ar@{-} (40.5,14)*+{}; (35,0)*+{} };
		{\ar@{-} (27,14)*+{}; (15,0)*+{\cdots} };
		{\ar@{-} (71,14)*+{}; (71,0)*+{n+h+1} };
		{\ar@{-} (83,14)*+{}; (83,0)*+{\cdots} };
		{\ar@{-} (93,14)*+{}; (93,0)*+{n+m} };
		{(55,22.5)*+{\omega}};
		{(43,7.5)*+{\beta}};
	\endxy
\]
    \caption{Decomposition of an $(n,(m,h),n-1)$-shuffle $\gamma$ into a sequence of two permutations: the permutation $\omega$, followed by a $(n-1,h)$-shuffle $\beta$ on the integer interval $\llbracket 1,n+h-1 \rrbracket$. The lift $\tilde{\gamma}$ is given by $\tilde{\gamma} = \tilde{\beta}\tilde{\omega}$. The factor $w$ (in bold) is mapped from $n$ to $n+h$.}
    \label{fig:pqhj_decomp_1dim}
\end{figure}

\begin{prop} The set $\{y_k x_{n-k}\}^{n}_{k=1}$ forms a basis for $\mathfrak{M}_n$. Consequently, $\mathfrak{M}$ is a free right $\mathfrak{A}$-module with respect to the basis $\mathcal{Y}$.
\end{prop}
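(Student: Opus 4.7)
The plan is to mirror the argument of Proposition~\ref{prop:M_fr_left} with Lemma~\ref{lem:yx_to_y} playing the role of Lemma~\ref{lem:xy_to_y}. First, I would read off the expansion of each candidate basis vector in the distinguished basis $\{y_{i,n}\}_{i=1}^{n}$ of $\mathfrak{M}_n$: specializing Lemma~\ref{lem:yx_to_y} with $n \mapsto k$ and $m \mapsto n-k$ gives
\[ y_k x_{n-k} = \sum_{h=0}^{n-k} \left(\frac{p}{u}\right)^h \binom{k-1+h}{h}_q y_{k+h,n}. \]
In particular, the coefficient of $y_{i,n}$ vanishes for $i < k$, and equals $\binom{k-1}{0}_q = 1$ when $i = k$.

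Next, I would assemble the change-of-basis matrix from $\{y_{i,n}\}_{i=1}^n$ to $\{y_k x_{n-k}\}_{k=1}^n$ with rows indexed by $i$ and columns by $k$. The observation above shows that this matrix is lower-unitriangular (lower-triangular with every diagonal entry equal to $1$) and hence invertible over $k$; it follows that $\{y_k x_{n-k}\}_{k=1}^n$ is a basis of $\mathfrak{M}_n$, proving the first assertion. The situation here is in fact slightly cleaner than the left-module case in Proposition~\ref{prop:M_fr_left}, where the diagonal entries involve the factor $u^{m-h} q^{(k-1)(m-h)}$ rather than $1$.

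For the second assertion, I would use the fact that $\mathfrak{A} \cong \Gamma_q[x]$ is $k$-spanned by the classes $\{x_m\}_{m \ge 0}$ with $x_m$ of degree $m$; the free right $\mathfrak{A}$-module on $\mathcal{Y} = \{y_k\}_{k \ge 1}$ therefore has, in each degree $n$, an $n$-element basis $\{y_k \otimes x_{n-k}\}_{k=1}^n$. The natural right-module surjection from this free module onto $\mathfrak{M}$ sends $y_k \otimes x_{n-k}$ to $y_k x_{n-k}$, whose image in degree $n$ is a basis of $\mathfrak{M}_n$ by the first assertion. Hence this map is an isomorphism in each graded piece, and $\mathfrak{M}$ is free as a right $\mathfrak{A}$-module on $\mathcal{Y}$. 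I anticipate no serious obstacle: the main computation is a single application of Lemma~\ref{lem:yx_to_y}, after which the triangularity reduces everything to careful bookkeeping of indices.
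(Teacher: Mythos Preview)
Your proposal is correct and follows essentially the same argument as the paper: both apply Lemma~\ref{lem:yx_to_y} to see that the transition matrix between $\{y_{i,n}\}$ and $\{y_k x_{n-k}\}$ is unitriangular (the paper orders indices decreasingly and calls it upper-triangular, you order increasingly and call it lower-triangular), and then deduce freeness on $\mathcal{Y}$ from the resulting degree-wise basis. Your treatment of the second assertion via the explicit map from the free module is slightly more spelled out than the paper's one-line remark, but the content is the same.
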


\begin{proof}
This proof follows the same logic as the proof of Proposition~\ref{prop:M_fr_left}. Observe that for any given $1 \le k \le  n$, the coefficient of $y_{i,n}$ in the expansion of $y_k x_{n-k}$ given by Lemma~\ref{lem:yx_to_y} vanishes for all $1 \le i \le k-1$, and that of $y_{k,n}$ is $(pu^{-1})^0 = 1$. Thus the change-of-basis matrix from the basis $\{y_{k,n}\}_{k=1}^{n}$ to $\{ y_k x_{n-k} \}_{k=1}^{n}$ is given by
\begin{center}
{\renewcommand{\arraystretch}{1.5}%
\begin{tabular}{ c|c c c c c } 
  & $y_{n}$ & $y_{n-1} x_1$ & $ y_{n-2} x_2$ & $\cdots$ & $y_1 x_{n-1}$ \\
 \hline
 $y_{n}$ & 1 & * & * & $\cdots$ & * \\ 
 $y_{n-1,n}$ & 0 & $1$ & * & $\cdots$ & * \\
 $y_{n-2,n}$ & 0 & 0 & $1$ & $\cdots$ & * \\
 $\vdots$ & $\vdots$ & $\vdots$ & $\vdots$ & $\ddots$ & $\vdots$ \\
 $y_{1,n}$ & 0 & 0 & 0 & $\cdots$ & 1 \\
\end{tabular}}
\end{center}
which is upper-triangular with all non-zero diagonal entries and hence invertible. It follows that $\{ y_k x_{n-k} \}_{k=1}^{n}$ forms a basis for $\mathfrak{M}_n$, and more generally the $\mathfrak{A}$-module $\mathfrak{M}$ is generated (as a $k$-module) by the basis $\bigcup_{n=1}^\infty \{ y_k x_{n-k} \}_{k=1}^{n}$. This basis of $\mathfrak{M}$ can be freely generated from the set $\mathcal{Y}$ by the right multiplication by generators of the algebra $\mathfrak{A}$.
\end{proof}

So far in this discussion, we have explored three different bases for the module $\mathfrak{M}$ as a $k$-module: $\{y_{k,n}\}$, $\{ x_{n-k} y_k \}$, and $\{ y_k x_{n-k}\}$. The following lemma establishes a formula for the change of basis between the latter two.

\begin{lem}\label{lem:yx_to_xy}
For $n, m \ge 1$,
\[ y_n x_m = \frac{1}{u^m}\sum_{h=0}^m \left[ \dfrac{1}{q^{(m-h)(n-1+h)}} {n-1+h \choose h}_q \prod_{k=0}^{h-1} \left( p - \dfrac{1}{q^{n-1+k}} \right) \right]  x_{m-h}y_{n+h}. \]
\end{lem}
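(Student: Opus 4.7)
The strategy is a change-of-basis argument in $\mathfrak{M}_{n+m}$. By Proposition~\ref{prop:M_fr_left}, the collection $\{x_{m-h}\,y_{n+h}\}_{h=0}^{m}$ is a basis for $\mathfrak{M}_{n+m}$, so $y_n x_m$ admits a unique expansion $y_n x_m = \sum_{h=0}^{m} c_h\, x_{m-h}\,y_{n+h}$, and it suffices to identify $c_h$ with the claimed coefficient. My plan is to expand both sides of this relation in the already-computed basis $\{y_{i,n+m}\}_{i=1}^{n+m}$: the left-hand side via Lemma~\ref{lem:yx_to_y}, and each basis element $x_{m-h}\,y_{n+h}$ on the right-hand side via Lemma~\ref{lem:xy_to_y}. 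Matching the coefficient of $y_{n+i,n+m}$ yields the lower-triangular linear system
\[ \left(\frac{p}{u}\right)^{i}\binom{n-1+i}{i}_q \;=\; u^{m-i}\sum_{h=0}^{i} c_h\, q^{(n+h-1)(m-i)}\binom{n+i-1}{i-h}_q, \qquad 0 \le i \le m,\]
whose diagonal entries are nonzero and hence uniquely determine the $c_h$.

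Next I would substitute the candidate closed form for $c_h$ into this system and simplify. Applying the elementary $q$-binomial identity $\binom{n-1+h}{h}_q\binom{n+i-1}{i-h}_q = \binom{n-1+i}{i}_q\binom{i}{h}_q$, pulling out the common factor $u^{-i}\binom{n-1+i}{i}_q$, and collapsing the $q$-exponents via $(n+h-1)(m-i)-(m-h)(n-1+h) = -(n-1+h)(i-h)$, the required equality reduces to the polynomial identity in $p$
\[ p^{i} \;=\; \sum_{h=0}^{i} q^{-(n-1+h)(i-h)}\binom{i}{h}_q \prod_{k=0}^{h-1}\bigl(p - q^{-(n-1+k)}\bigr).\]

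This last identity is a special case of the $q$-Taylor expansion. Specifically, applying the $q$-binomial reflection $\binom{i}{h}_{q^{-1}} = q^{-h(i-h)}\binom{i}{h}_q$ to Jackson's formula
\[ p^{i} \;=\; \sum_{h=0}^{i}\binom{i}{h}_{q^{-1}} a^{i-h}\prod_{k=0}^{h-1}(p - a q^{-k})\]
with $a = q^{-(n-1)}$ recovers exactly the displayed relation. An alternative, self-contained derivation is a straightforward induction on $i$ using the $q$-Pascal rule $\binom{i}{h}_q = \binom{i-1}{h}_q + q^{i-h}\binom{i-1}{h-1}_q$, which avoids any appeal to $q$-calculus.

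The main obstacle is the bookkeeping in the middle step: tracking the $q$-exponents arising from the two lemmas and recognizing that, after the $q$-binomial consolidation, the hoped-for formula for $c_h$ is forced by a single well-known $q$-identity rather than by a recursion that must be solved from scratch. Once this identification is made, the proof closes immediately.
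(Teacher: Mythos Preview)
Your proof is correct and takes a genuinely different route from the paper's. The paper proceeds by induction on $m$: the base case $m=1$ is obtained by solving the two-by-two system coming from Lemmas~\ref{lem:xy_to_y} and~\ref{lem:yx_to_y}, and the inductive step exploits the relation $x_{m+1}=\tfrac{1}{[m+1]_q}\,x_m x_1$ in $\Gamma_q[x]$ to write $y_n x_{m+1}=\tfrac{1}{[m+1]_q}(y_n x_m)x_1$, then applies the hypothesis and the base case before collecting coefficients via the identity $q^{m+1-h}[h]_q+[m-h+1]_q=[m+1]_q$. Your argument instead sets up the full triangular change-of-basis system at once and verifies the closed form by reducing it to the classical $q$-Newton expansion $p^{\,i}=\sum_{h}\binom{i}{h}_{q^{-1}}a^{\,i-h}\prod_{k=0}^{h-1}(p-aq^{-k})$ with $a=q^{-(n-1)}$. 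What this buys you is uniformity: the paper's induction requires dividing by $[m+1]_q$, which is delicate precisely in the root-of-unity regime that Corollary~\ref{cor:yx_to_y_q=unit} later uses, whereas your verification never divides by any $q$-integer. The paper's approach, on the other hand, is more self-contained in that it never appeals to an external $q$-identity and makes the recursive structure of the coefficients transparent.
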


\begin{proof}
We will prove this formula by induction on $m$. For $m = 1$, first we apply Lemma~\ref{lem:xy_to_y} to get
\[ x_1 y_n = \sum^1_{h=0} u^{1-h}q^{(n-1)(1-h)} {n-1+h \choose h }_q y_{n+h,n+1} = {n \choose 1 }_q y_{n+1} + uq^{n-1} y_{n,n+1}\]
and apply Lemma~\ref{lem:yx_to_y} to get
\[ y_n x_1 = \sum^1_{h=0} \left(\frac{p}{u}\right)^h {n-1+h \choose h }_q y_{n+h,n+1} = \frac{p}{u} {n \choose 1}_q y_{n+1} + y_{n,n+1}.\]
By solving for $y_{n,n+1}$ from the first equation and substitute it into the second equation, we obtain
\[ \begin{split} 
y_n x_1 & = \frac{p}{u} {n \choose 1}_q y_{n+1} + \dfrac{1}{uq^{n-1}} \left( x_1 y_n - {n \choose 1}_q y_{n+1} \right) \\
& = \frac{1}{u} \left[ {n \choose 1}_q \left(p - \frac{1}{q^{n-1}} \right) y_{n+1} + \dfrac{1}{q^{n-1}} x_1 y_n \right]
\end{split}\]
which proves the base case.

Assume that the formula holds for all $1 \le k \le m$. The key observation here is that we can write $x_{m+1} = \frac{1}{[m+1]_q} x_m x_1$ in the quantum divided power algebra $\Gamma_q [x]$. By applying the induction hypothesis, we have
\[ \begin{array}{l}
y_n x_{m+1} = \dfrac{1}{[m+1]_q} (y_n x_m) x_1 \\[15pt]
= \dfrac{1}{[m+1]_q} \dfrac{1}{u^m}\displaystyle\sum_{h=0}^m \left[ \dfrac{1}{q^{(m-h)(n-1+h)}} {n-1+h \choose h}_q \prod_{k=0}^{h-1} \left( p - \dfrac{1}{q^{n-1+k}} \right) \right]  x_{m-h}y_{n+h} x_1 \\[15pt]
= \dfrac{1}{[m+1]_q} \dfrac{1}{u^m} \displaystyle\sum_{h=0}^m \left\{ \left[ \dfrac{1}{q^{(m-h)(n-1+h)}} {n-1+h \choose h}_q \prod_{k=0}^{h-1} \left( p - \dfrac{1}{q^{n-1+k}} \right) \right]  x_{m-h} \right. \\[15pt]
\quad \left. \dfrac{1}{u}\left[ \displaystyle {n+h \choose 1}_q \left(p - \frac{1}{q^{n+h-1}} \right) y_{n+h+1} + \dfrac{1}{q^{n+h-1}} x_1 y_{n+h} \right] \right\} \\[15pt]
= \dfrac{1}{[m+1]_q} \dfrac{1}{u^{m+1}} \displaystyle\sum_{h=0}^m \left\{ \left[ \dfrac{[n+h]_q}{q^{(m-h)(n-1+h)}} {n-1+h \choose h}_q \prod_{k=0}^{h} \left( p - \dfrac{1}{q^{n-1+k}} \right) \right]  x_{m-h} \right. \\[15pt]
\quad \left. y_{n+h+1} \displaystyle  + \left[ \dfrac{[m-h+1]_q}{q^{(m-h+1)(n-1+h)}} {n-1+h \choose h}_q \prod_{k=0}^{h-1} \left( p - \dfrac{1}{q^{n-1+k}} \right) \right] x_{m-h+1} y_{n+h} \right\}.
\end{array}\]
This is an expansion of $y_n x_{m+1}$ in terms of the basis $\{ x_{m+1-h} y_{n+h} \}_{h=0}^{m+1}$. In particular, the coefficient of $x_{m+1-h} y_{n+h}$ comes from those of the terms $x_{m-{h-1}}y_{n+{h-1}+1}$ and $x_{m+1-h}y_{n+h}$:
\[ \begin{array}{l}
\dfrac{1}{[m+1]_q} \dfrac{1}{u^{m+1}}\left\{ \displaystyle \left[ \dfrac{[n+h-1]_q}{q^{(m-(h-1))(n-1+h-1)}} {n-1+h-1 \choose h-1}_q \prod_{k=0}^{h-1} \left( p - \dfrac{1}{q^{n-1+k}} \right) \right] \right. \\[15pt]
\quad \left. \displaystyle  + \left[ \dfrac{[m-h+1]_q}{q^{(m-h+1)(n-1+h)}} {n-1+h \choose h}_q \prod_{k=0}^{h-1} \left( p - \dfrac{1}{q^{n-1+k}} \right) \right] \right\} \\[15pt]
= \dfrac{q^{m+1-h} [h]_q + [m-h+1]_q}{u^{m+1} [m+1]_q} \dfrac{1}{q^{(m+1-h)(n-1+h)}} \displaystyle {n-1+h \choose h}_q \prod_{k=0}^{h-1} \left( p - \dfrac{1}{q^{n-1+k}} \right).
\end{array}\]
Finally, observe that $q^{m+1-h} [h]_q + [m-h+1]_q = q^{m+1-h} (1+ q + ...+ q^{h-1}) + (1 + q + ... + q^{m-h}) = 1 + q + ... + q^{m-h} + q^{m-h+1} + ... + q^m = [m+1]_q$, so the coefficient of $x_{m+1-h} y_{n+h}$ computed above matches the desired formula, completing our proof.
\end{proof}

\begin{cor}\label{cor:yx_to_y_q=unit}
If $q$ is a primitive $m^\mathrm{th}$ root of unity, then
\[\displaystyle y_n x_m = \dfrac{1}{u^m} \left[ \left\lceil \frac{n}{m} \right\rceil \prod_{k=0}^{m-1} \left( p - \dfrac{1}{q^{n-1+k}} \right) \right]  y_{n+m} + \langle x_{h} y_{n+m-h} \rangle \]
where $\lceil x \rceil$ denotes the ceiling function of $x$, and $\langle x_{h} y_{n+m-h} \rangle$ consists of all terms of the form $x_{h} y_{n+m-h}$ for $1 \le h \le m$.
\end{cor}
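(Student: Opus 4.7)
The plan is to specialize Lemma~\ref{lem:yx_to_xy} to the case where $q$ is a primitive $m^\mathrm{th}$ root of unity and extract the coefficient of $y_{n+m}$. In that lemma's formula, the only summand contributing to $y_{n+m}$ is the $h = m$ term. Since $(m-h)(n-1+h)|_{h=m} = 0$, this summand reduces to $\frac{1}{u^m}{n-1+m \choose m}_q \prod_{k=0}^{m-1}(p - q^{-(n-1+k)}) \, y_{n+m}$. All other summands ($0 \le h \le m-1$) are scalar multiples of $x_{m-h} y_{n+h}$ with $m-h$ running over $\{1, \ldots, m\}$; reindexing via $h' = m - h$ shows that these are exactly the terms captured by $\langle x_{h'} y_{n+m-h'} \rangle$. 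Thus the corollary reduces to the single identity ${n-1+m \choose m}_q = \lceil n/m \rceil$ when $q$ is a primitive $m^\mathrm{th}$ root of unity.

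I would establish this identity via the $q$-analogue of Lucas' theorem: for such $q$ and for $a = a_1 m + a_0$, $b = b_1 m + b_0$ the base-$m$ expansions with $0 \le a_0, b_0 < m$, one has ${a \choose b}_q = {a_1 \choose b_1}{a_0 \choose b_0}_q$, where ${a_1 \choose b_1}$ is the ordinary binomial. Setting $a = n-1+m$ and $b = m$, a short case analysis based on $n = km + r$ with $0 \le r < m$ gives $n-1+m = km + (m-1)$ when $r = 0$ and $n-1+m = (k+1)m + (r-1)$ when $1 \le r < m$, yielding ${n-1+m \choose m}_q = {k \choose 1} = k$ or ${k+1 \choose 1} = k+1$ respectively; both values coincide with $\lceil n/m \rceil$.

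The one subtlety worth flagging is that the inductive proof of Lemma~\ref{lem:yx_to_xy} used the identity $x_{m+1} = \frac{1}{[m+1]_q} x_m x_1$, which fails whenever $[m+1]_q = 0$. I would address this by noting that every coefficient appearing in the statement of Lemma~\ref{lem:yx_to_xy} is a Laurent polynomial in $q$ (the quantum binomials being genuine polynomials with non-negative integer coefficients), so the identity, once established for $q$ in the Zariski-dense locus of non-roots-of-unity, extends by polynomial identity to all $q \in k^\times$, including the primitive $m^\mathrm{th}$ roots of unity needed here. I expect this brief continuity observation to be the main conceptual step; the remainder is a direct specialization combined with the quantum Lucas computation.
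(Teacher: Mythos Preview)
Your proposal is correct and follows the same overall reduction as the paper: specialize Lemma~\ref{lem:yx_to_xy}, observe that only the $h=m$ summand contributes to $y_{n+m}$, and then evaluate ${n-1+m\choose m}_q$ at a primitive $m^{\mathrm{th}}$ root of unity.

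Two points of comparison are worth noting. First, for the identity ${n-1+m\choose m}_q=\lceil n/m\rceil$, the paper argues directly from the product form of the Gaussian binomial: exactly one factor $1-q^N$ in the numerator with $N\in\llbracket n,n+m-1\rrbracket$ has $m\mid N$, contributing $N/m=\lceil n/m\rceil$ via $(1-q^{ma})/(1-q^m)=a$, while the remaining factors cancel against the denominator. Your appeal to the $q$-Lucas theorem is equally valid and arguably cleaner, though it imports an external result where the paper stays self-contained. Second, your observation about the inductive proof of Lemma~\ref{lem:yx_to_xy} is well taken: the step $x_m=\tfrac{1}{[m]_q}x_{m-1}x_1$ is unavailable precisely when $q$ is a primitive $m^{\mathrm{th}}$ root of unity, so the lemma as proved does not literally cover the case needed here. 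The paper does not comment on this; your Zariski-density (polynomial-identity) argument is the standard and correct fix, since all coefficients on both sides, expressed in the basis $\{x_{n-k}y_k\}$, are Laurent polynomials in $q$. This is a genuine improvement in rigor over the paper's presentation.
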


\begin{proof}
It suffices to show that ${n-1+m \choose m}_q = \lceil \frac{n}{m} \rceil$. By definition, we have
\[ {n-1+m \choose m}_q = \frac{(1-q^{n+m-1})(1-q^{n+m-2})...(1-q^n)}{(1-q)(1-q^2)(1-q^m)}. \]
Among $m$ consecutive numbers in $\llbracket n, n+m-1 \rrbracket$, there exists exactly one number of the form $N = ma$ for some $a \ge 1$, for which we get
\[ \frac{1-q^{ma}}{1-q^m} = \frac{(1-q^m)(1+ q^m + ... + q^{m(a-1)})}{1-q^m} = a \]
since $q^m = 1$. Each of the other terms in the numerator has the form $1-q^{mb+r} = 1-q^r \ne 0$ for a unique $1 \le r \le m-1$, which cancels out with the corresponding $1-q^r$ term in the denominator; here $b$ is either $a$ or $a-1$. It follows that ${n-1+m \choose m}_q = a$. Since $N$ is the smallest multiple of $m$ that is greater than or equal to $n$, we deduce that $a = \lceil \frac{n}{m} \rceil$.
\end{proof}

We will now proceed to compute the homology of the $\mathfrak{A}$-bimodule $\mathfrak{M}$ using the description above. Including the sign twist $\epsilon$ into the braidings $\sigma$ and $\varphi$ is the same as replacing $q$ and $u$ with $-q$ and $-u$, respectively. Let $\mathfrak{A}(V^*_\epsilon) \cong \Gamma_{-q}[x] =: \Gamma$ and $\mathfrak{M} = \mathfrak{M}(V^*_\epsilon, W^*)$. It follows from Proposition~\ref{prop:M_fr_left} that
\[ \mathrm{Tor}^{\Gamma^e}_{*,*} (\mathfrak{M},k) = \mathfrak{M} \underset{\Gamma \otimes \Gamma^{op}}{\overset{L}{\otimes}} k = (\Gamma \otimes k\mathcal{Y}) \underset{\Gamma \otimes \Gamma^{op}}{\overset{L}{\otimes}} k = k\mathcal{Y} \underset{\Gamma^{op}}{\overset{L}{\otimes}} k\]
where $\mathcal{Y} = \{ y_1, y_2, ... \}$ is the chosen basis for $\mathfrak{M}$ as a free left $\Gamma$-module. We will continue this computation in the following cases:

\textit{Case 1: $-q$ is not a root of unity in $k$.} Recall from Proposition~\ref{prop:q_div_pow_alg} that $\Gamma \cong k[x_1]$, so it suffices to only look at the right multiplication of the generators $y_n$ by $x_1$. Since all $x_m y_n$ terms vanish in $k\mathcal{Y}$ for $m \ge 1$, Lemma~\ref{lem:yx_to_xy} gives
    \[ y_n x_1 = \dfrac{1}{-u}{n \choose 1}_{-q} \left( p - \frac{1}{(-q)^{n-1}} \right) y_{n+1} = \frac{1-(-q)^n}{-u(1+q)} \left( p - \frac{1}{(-q)^{n-1}} \right) y_{n+1}. \]
    If $p$ is not a power of $-q^{-1}$ (including $p = (-q^{-1})^0 = 1$), $k\mathcal{Y}$ is freely generated as a $\Gamma^{op}$-module by a single generator $y_1$, and hence
    \[ \mathrm{Tor}^{\Gamma^e}_{*,*} (\mathfrak{M},k) = (k\{y_1\} \otimes \Gamma^{op})  \underset{\Gamma^{op}}{\overset{L}{\otimes}} k = k\{y_1\} \overset{L}{\otimes} k = k\{y_1\} = \Sigma^1 k\]
    where $\Sigma^i M$ denotes the shift by $i$ internal degrees of a graded module $M$, i.e.
    \[\mathrm{Tor}^{\Gamma^e}_{j,n} (\mathfrak{M},k) = \begin{cases}
    k \quad \text{ for } j=0, n=1\\
    0 \quad \text{ else.}
    \end{cases}\]
    
    If $p = (-q)^{-(r-1)}$ for some $r \ge 1$, $y_r x_1 = 0$ whereas $y_n x_1 \ne 0$ for all $n \ne r$. It follows that the elements $y_n$ for $1 \le n \le r$ can be generated from $y_1$ by multiplying with $x_1^{n-1}$, but $y_{r+1}$ cannot. All elements $y_{\ge r+1}$ can be freely generated from $y_{r+1}$ by multiplying with powers of $x_1$. Thus as a right $\Gamma$-module, $k\mathcal{Y} \cong k\{y_1\}[x_1]/x_1^r \oplus k\{y_{r+1}\}[x_1] \cong \Sigma^1 k[x_1]/x_1^r \oplus \Sigma^{r+1} k[x_1]$. Since $\Gamma^{op} \cong \Gamma$, we have
    \[\mathrm{Tor}^{\Gamma^e}_{*,*} (\mathfrak{M},k) = \left(\Sigma^1 \Gamma/x_1^r \oplus \Sigma^{r+1} \Gamma \right) \underset{\Gamma}{\overset{L}{\otimes}} k = \Sigma^1 \left( \Gamma/x_1^r \underset{\Gamma}{\overset{L}{\otimes}} k \right) \oplus \Sigma^{r+1} k. \]
    To compute the first summand, we use the graded free resolution of $k$ as a $k[x_1]$-module
    \[0 \to \Sigma^1 k[x_1] \xrightarrow{x_1} k[x_1] \xrightarrow{\epsilon} k \to 0\]
    where $\epsilon$ is the augmentation map. Applying $- \otimes_{k[x_1]} \Gamma/x_1^r$, we get
    \[ 0 \to \Sigma^1 k[x_1]/x_1^r \xrightarrow{x_1} k[x_1]/x_1^r \to 0.\]
    Multiplication by $x_1$ has image $(x_1)$ and kernel $k\{x_1^{r-1}\}$, so
    \[ \mathrm{Tor}_{j,*}^\Gamma (\Gamma/x_1^r, k) = \begin{cases}
        k \hfill \text{for } j = 0\\
        \Sigma^1 k\{x_1^{r-1}\} \cong \Sigma^r k \quad \text{for } j = 1\\
        0 \hspace{1.15in} \text{else,}
    \end{cases} \]
    or
    \[ \mathrm{Tor}_{j,n}^\Gamma (\Gamma/x_1^r, k) = \begin{cases}
        k \quad \text{for } j = 0, n=0\\
        k \quad \text{for } j = 1, n=r\\
        0 \quad \text{else.}
    \end{cases} \]
    Hence
    \[\mathrm{Tor}^{\Gamma^e}_{j,n} (\mathfrak{M},k) = \begin{cases}
        k \quad \text{for } j = 0, \text{ and } n = 1, r+1\\
        k \quad \text{for } j = 1, \text{ and } n = r+1\\
        0 \quad \text{else.}
    \end{cases} \]
    
\textit{Case 2: $-q$ is a primitive $m^\mathrm{th}$ root of unity in $k$.} By Proposition~\ref{prop:q_div_pow_alg}, $\Gamma = k[x_1]/x_1^{m} \otimes \Gamma[x_m]$, so it suffices to study the right multiplication of the generators $y_n$ by $x_1$ and $x_m$. Let $\Lambda_{m} := k[x_1]/x_1^m$ denote the degree-$m$ truncated polynomial algebra in variable $x_1$. If $k$ has characteristic $0$, there is an isomorphism $\Gamma[x_m] \cong k[x_m]$. Consider the multiplication by $x_m$. In this case, Corollary~\ref{cor:yx_to_y_q=unit} gives
    \[\displaystyle y_n x_m = \dfrac{1}{(-u)^m}\left[ \left\lceil \frac{n}{m} \right\rceil \prod_{k=0}^{m-1} \left( p - \dfrac{1}{(-q)^{n-1+k}} \right) \right]  y_{n+m}.\]
    Since the power of $-q$ in the product cycles through $m$ consecutive values, we see that $y_n x_m = 0$ if and only if $p$ is a power of $-q$.
    
    If $p$ is not a power of $-q$, observe that
    \[ y_n x_1 = \frac{1-(-q)^n}{-u(1+q)} \left( p - \frac{1}{(-q)^{n-1}} \right) y_{n+1} \]
    vanishes precisely when $m$ is a divisor of $n$. Observe that for every $n \ge 1$, if we write $n = ma + r$ where $1 \le r \le m$, then the element $y_n$ of $\mathcal{Y}$ can be generated uniquely as $y_n = C y_1 x_m^a x_1^{r-1}$ for some nonzero constant $C$. Thus we may identify
    \[ k\mathcal{Y} \cong k\{y_1\} (k[x_1]/x_1^m \otimes k[x_m]) \cong \Sigma^1 (\Lambda_m \otimes k[x_m]) = \Sigma^1 \Gamma, \]
    so
    \[ \mathrm{Tor}^{\Gamma^e}_{*,*} (\mathfrak{M},k) = \Sigma^1 \Gamma  \underset{\Gamma}{\overset{L}{\otimes}} k = \Sigma^1 k.\]
    
    On the other hand, if $p = (-q)^r$ for some $1 \le r \le m$, $y_n x_m = 0$ for all $n$, so $k\mathcal{Y}$ is trivial as a $k[x_m]$-module. In this case,
    \[ y_n x_1 = \frac{1-(-q)^n}{-u(1+q)} \left( p - \frac{1}{(-q)^{n-1}} \right) y_{n+1} \]
    vanishes iff $m$ divides $n$ or $n+r-1$. If $r = 1$, i.e. $p = -q$, these two conditions coincide. It follows that $k\mathcal{Y}$ is freely generated by $\{ y_1, y_{m+1}, ...\}$ as a $\Lambda_m$-module, i.e.
    \[ k\mathcal{Y} \cong k\{y_1, y_{m+1}, ...\} [x_1]/x_1^m \cong \bigoplus_{a=0}^\infty \Lambda_m \{y_{ma+1}\} \cong \bigoplus_{a=0}^\infty \Sigma^{ma+1} \Lambda_m. \]
    We then have
    \[\begin{array}{r l}
         \mathrm{Tor}^{\Gamma^e}_{*,*} (\mathfrak{M},k) & = \left( \displaystyle \bigoplus_{a=0}^\infty \Sigma^{ma+1} \Lambda_m \right) \underset{\Lambda_m \otimes k[x_m]}{\overset{L}{\otimes}} k\\[15pt]
         & = \displaystyle \bigoplus_{a=0}^\infty \Sigma^{ma+1} k \underset{k[x_m]}{\overset{L}{\otimes}} k = \displaystyle \bigoplus_{a=0}^\infty \Sigma^{ma+1} \Lambda[z_m]
    \end{array} \]
    for some $z_m \in \mathrm{Tor}_{1,m}$, i.e.
    \[\mathrm{Tor}^{\Gamma^e}_{j,n} (\mathfrak{M},k) = \begin{cases}
        k \quad \text{for } j = 0, \text{ and } n = ma + 1 \text{ for } a \ge 0\\
        k \quad \text{for } j = 1, \text{ and } n = ma + 1 \text{ for } a \ge 1\\
        0 \quad \text{else.}
    \end{cases} \]
    
    Finally, if $r > 1$, we then have $y_n x_1 = 0$ whenever $n = ma$ or $n = ma-r+1$. Following the same analysis above, we see that as a $\Lambda_m$-module,
    \[\begin{array}{r l}
    k\mathcal{Y} & \cong k\{y_1\}[x_1]/x_1^{m-r+1} \oplus k\{y_{m-r+2}\}[x_1]/x_1^{r-1} \oplus k\{y_{m+1}\}[x_1]/x_1^{m-r+1} \oplus ... \\[10pt]
    & = \left( \displaystyle \bigoplus_{a=0}^\infty \Lambda_{m-r+1}\{y_{ma+1}\} \right) \oplus \left( \displaystyle \bigoplus_{a=1}^\infty \Lambda_{r-1}\{y_{ma-r+2}\} \right) \\[15pt]
    & \cong \left( \displaystyle \bigoplus_{a=0}^\infty \Sigma^{ma+1} \Lambda_{m-r+1} \right) \oplus \left( \displaystyle \bigoplus_{a=1}^\infty \Sigma^{ma-r+2} \Lambda_{r-1} \right).
    \end{array} \]
    Since $k\mathcal{Y}$ is trivial as a $k[x_m]$-module, by using the universal property of the tensor product (see, e.g., Prop. 13.104.1 in the ancillary file of \cite{gr20}), we may write
    \[ \begin{array}{r l}
    k\mathcal{Y} \underset{\Lambda_m \otimes k[x_m]}{\overset{L}{\otimes}} k & = ( k\mathcal{Y} \otimes k ) \underset{\Lambda_m \otimes k[x_m]}{\overset{L}{\otimes}} (k \otimes k)\\[15pt] 
    & = \left( k\mathcal{Y} \underset{\Lambda_m}{\overset{L}{\otimes}} k \right) \otimes \left( k \underset{k[x_m]}{\overset{L}{\otimes}} k \right) = \left( k\mathcal{Y} \underset{\Lambda_m}{\overset{L}{\otimes}} k \right) \otimes \Lambda[z_m].
    \end{array}\]
    It follows that
    \[\begin{array}{l}
    \mathrm{Tor}^{\Gamma^e}_{*,*} (\mathfrak{M},k) \cong \left[ \left( \left( \displaystyle \bigoplus_{a=0}^\infty \Sigma^{ma+1} \Lambda_{m-r+1} \right) \oplus \left( \displaystyle \bigoplus_{a=1}^\infty \Sigma^{ma-r+2} \Lambda_{r-1} \right) \right) \underset{\Lambda_m}{\overset{L}{\otimes}} k \right] \otimes \Lambda[z_m]\\[15pt]
    \cong \left[ \left( \displaystyle \bigoplus_{a=0}^\infty \Sigma^{ma+1} \left( \Lambda_{m-r+1}\underset{\Lambda_m}{\overset{L}{\otimes}} k \right) \right) \oplus \left( \displaystyle \bigoplus_{a=1}^\infty \Sigma^{ma-r+2} \left( \Lambda_{r-1} \underset{\Lambda_m}{\overset{L}{\otimes}} k \right) \right) \right] \otimes \Lambda[z_m].
    \end{array} \]
    It is left to compute $\Lambda_s \underset{\Lambda_m}{\overset{L}{\otimes}} k$ for $1 \le s \le m-1$. We shall use the resolution
    \[ ...  \xrightarrow{x^s} \Sigma^{2m} \Lambda_m \xrightarrow{x^{m-s}} \Sigma^{m+s} \Lambda_m \xrightarrow{x^s} \Sigma^m \Lambda_m \xrightarrow{x^{m-s}} \Sigma^s \Lambda_m \xrightarrow{x^s} \Lambda_m \xrightarrow{\epsilon} \Lambda_s \xrightarrow{} 0\]
    where the augmentation map $\epsilon$ is the quotient map. Applying $- \underset{\Lambda_m}{\overset{L}{\otimes}} k$, we get
    \[ ... \xrightarrow{0} \Sigma^{2m} k \xrightarrow{0} \Sigma^{m+s} k \xrightarrow{0} \Sigma^m k \xrightarrow{0} \Sigma^s k \xrightarrow{0} k \xrightarrow{} 0.\] 
    Hence
    \[\mathrm{Tor}^{\Lambda_m}_{j,n} (\Lambda_s,k) = \begin{cases}
        k \quad \text{for } j = 2a \text{ and } n = ma, \text{ for } a \ge 0\\
        k \quad \text{for } j = 2a+1 \text{ and } n = ma+s, \text{ for } a \ge 0\\
        0 \quad \text{else.}
    \end{cases} \]

By applying Corollary~\ref{cor:main} and the duality of Tor and Ext functors to the computation of $\mathrm{Tor}_{*,*}^{\Gamma^e}(\mathfrak{M},k)$ above, we obtain a characterization of the homology of type-\textit{B} Artin groups with coefficients in one-dimensional braid representations.

\begin{thm}\label{thm:1dim}
Let $(V,W,\sigma,\tau)$ be the left-braided vector space where $V = W = k$, and the braidings $\sigma$ and $\tau$ are given by multiplications by units $q$ and $p$, respectively. The homology of the type-\textit{B} Artin group $B_n$ with coefficients in the braid representation $V^{\otimes n} \otimes W$ is given in the following cases:
\begin{enumerate}
    \item If $-q$ is not a root of unity in $k$, and
    \begin{enumerate}
        \item $p$ is not a power of $-q^{-1}$:
        \[H_j (B_n; V^{\otimes n} \otimes W) = \begin{cases}
        k \quad \text{ for } n=0, j=0\\
        0 \quad \text{ else;}
        \end{cases}\]
        \item $p = (-q)^{-(r-1)}$ for some $r \ge 1$:
        \[H_j (B_n; V^{\otimes n} \otimes W) = \begin{cases}
        k \quad \text{ for } n=0, j=0\\
        k \quad \text{ for } n=r, \text{ and } j=r-1, r\\
        0 \quad \text{ else.}
        \end{cases}\]
    \end{enumerate}
    \item If $-q$ is a primitive $m^\mathrm{th}$ root of unity in $k$, and
    \begin{enumerate}
        \item $p$ is not a power of $-q$:
        \[H_j (B_n; V^{\otimes n} \otimes W) = \begin{cases}
        k \quad \text{ for } n=0, j=0\\
        0 \quad \text{ else;}
        \end{cases}\]
        \item $p = -q$:
        \[H_j (B_n; V^{\otimes n} \otimes W) = \begin{cases}
        k \quad \text{ for } n =0, j=0\\
        k \quad \text{ for } n=ma \text{ } (a \ge 1), \text{ and } j=n-1,n\\
        0 \quad \text{ else;}
        \end{cases}\]
        \item $p = (-q)^r$ for $2 \le r \le m$: for $n = mk$ $(k \ge 0)$
        \[H_j (B_n; V^{\otimes n} \otimes W) = \begin{cases}
        k \hspace{.4in} \text{ for } j= n-2k,n\\
        k \oplus k \quad \text{ for } n-2k+1 \le j \le n-1\\
        0 \hspace{.4in} \text{ else,}
        \end{cases}\]
        and for $n = mk+m-r+1$ $(k \ge 0)$
        \[H_j (B_n; V^{\otimes n} \otimes W) = \begin{cases}
        k \hspace{.4in} \text{ for } j= n-2k-1,n\\
        k \oplus k \quad \text{ for } n-2k \le j \le n-1\\
        0 \hspace{.4in} \text{ else,}
        \end{cases}\]
        while for all other $n$ it vanishes.
    \end{enumerate}
\end{enumerate}
\end{thm}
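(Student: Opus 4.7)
The plan is to assemble the theorem as a direct consequence of Corollary~\ref{cor:main} combined with the explicit Tor computations already carried out earlier in this section. By Corollary~\ref{cor:main} and the duality between Ext and Tor over a field, we have
\[ H_j(B_n; V^{\otimes n} \otimes W) \cong \mathrm{Tor}^{\Gamma^e}_{n-j,\, n+1}(\mathfrak{M}, k)^*, \]
where $\Gamma = \Gamma_{-q}[x]$. The task is therefore to read off the value of each bigraded Tor module at the specific bidegree $(n-j,\, n+1)$ and rearrange the indexing into the $(n, j)$ indexing used in the theorem statement.

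For the easier cases, I would simply invoke the corresponding Tor computation from the preceding discussion. Cases 1(a) and 2(a), in which the Tor is supported only at bidegree $(0,1)$, yield $H_0(B_0) = k$ and nothing else. Case 1(b) matches the support $\{(0,1),(0,r+1),(1,r+1)\}$ to the three nontrivial homological values $(n,j) = (0,0)$, $(r,r)$, and $(r, r-1)$. Case 2(b) translates the arithmetic progressions $(0, ma+1)$ for $a \ge 0$ and $(1, ma+1)$ for $a \ge 1$ to the announced values at $n = ma$ and $j \in \{n-1, n\}$ (together with $H_0(B_0)$ from the $a = 0$ term).

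The main bookkeeping lies in Case 2(c). Here the Tor decomposes as
\[ \left[ \bigoplus_{a \ge 0} \Sigma^{ma+1} \mathrm{Tor}^{\Lambda_m}_{*,*}(\Lambda_{m-r+1}, k) \;\oplus\; \bigoplus_{a \ge 1} \Sigma^{ma-r+2} \mathrm{Tor}^{\Lambda_m}_{*,*}(\Lambda_{r-1}, k) \right] \otimes \Lambda[z_m], \]
with $z_m$ in bidegree $(1,m)$, while each $\mathrm{Tor}^{\Lambda_m}_{*,*}(\Lambda_s, k)$ is concentrated along the two progressions $(2b, mb)$ and $(2b+1, mb+s)$. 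My plan is to tabulate systematically: for each of the two summands, each choice of $\Lambda[z_m]$-factor ($1$ or $z_m$), and each progression type, compute the bidegrees $(j', n')$ at which a copy of $k$ appears; then select exactly those with $n' = n+1$ and set $j = n - j'$. The arithmetic shows that internal degree $n+1$ is attainable only when $n \equiv 0$ or $n \equiv m-r+1 \pmod m$, matching the two families in the theorem; within each such $n = mk$ or $n = mk + m - r + 1$, the boundary values of $j$ receive a single contribution while the interior values receive one contribution from each summand, producing $k \oplus k$. The main obstacle is precisely this bookkeeping: one must ensure that no contribution is double-counted across the two families, that multiplication by $z_m$ is correctly tracked in both the homological and internal gradings, and that the extremal cases (small $k$, or $r = m$ so that $\Lambda_{r-1}$ degenerates) are handled so as to recover the stated range $n - 2k + 1 \le j \le n - 1$ exactly.
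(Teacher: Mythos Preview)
Your proposal is correct and follows essentially the same approach as the paper: the theorem is stated immediately after the Tor computations, and the paper's proof consists precisely of invoking Corollary~\ref{cor:main} together with the duality of Tor and Ext to translate those computations into the homological statement. Your added bookkeeping sketch for Case~2(c) is compatible with, and slightly more explicit than, what the paper leaves implicit.
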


Observe that in all cases, the homology of the Artin groups $B_n$ with coefficients in one-dimensional braid representations over $k$ has very large vanishing ranges. Interesting phenomena happen when $-q$ is a primitive $m^\mathrm{th}$ root of unity and $p$ is a power of $-q$. The following is a direct consequence of the previous theorem.

\begin{cor}\label{cor:1dim}
Let $-q$ be a primitive $m^\mathrm{th}$ root of unity. If $p = -q$, $H_j (B_n; V^{\otimes n} \otimes W)$ has a lower vanishing line $j < n-1$. If $p = (-q)^r$ for $2 \le r \le m$, this vanishing line is $j \le \frac{m-2}{m}n -1$.
\end{cor}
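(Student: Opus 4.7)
The corollary is essentially a direct bookkeeping consequence of Theorem~\ref{thm:1dim}, parts (2)(b) and (2)(c), so my plan is to simply read off the smallest degree $j$ in which $H_j(B_n; V^{\otimes n} \otimes W)$ is nonzero in each relevant case and compare it with the claimed vanishing line.

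First, for the case $p = -q$, I would invoke Theorem~\ref{thm:1dim}(2)(b). There the homology is nonzero only when either $n = 0$ and $j = 0$, or $n = ma$ for some $a \ge 1$ and $j \in \{n-1, n\}$. In every one of those situations $j \ge n - 1$, so any $j < n-1$ forces the homology to vanish; this is exactly the line claimed.

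For the case $p = (-q)^r$ with $2 \le r \le m$, I would apply Theorem~\ref{thm:1dim}(2)(c), which says the homology can only be nonzero when $n$ has one of two forms. When $n = mk$, the smallest degree of nonvanishing is $j = n - 2k = k(m-2) = \tfrac{m-2}{m}n$, which is strictly greater than $\tfrac{m-2}{m}n - 1$. When $n = mk + m - r + 1$, the smallest degree is $j = n - 2k - 1 = k(m-2) + m - r$; I would then compare this against $\tfrac{m-2}{m}n - 1 = k(m-2) + m - 3 - \tfrac{(m-2)(r-1)}{m}$, and after clearing denominators the inequality reduces to $2(m - r + 1) > 0$, which holds because $r \le m$. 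For all other values of $n$, Theorem~\ref{thm:1dim}(2)(c) already gives vanishing, so the bound is automatic.

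The only real step is the elementary arithmetic verification in the $n = mk + m - r + 1$ subcase, and I anticipate no obstacle beyond making sure to cover both families of $n$ that Theorem~\ref{thm:1dim}(2)(c) singles out. Accordingly, the write-up should consist of two short paragraphs — one for $p = -q$ and one for $p = (-q)^r$ — each citing Theorem~\ref{thm:1dim} and performing the minimum-$j$ comparison above.
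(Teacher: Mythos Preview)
Your proposal is correct and follows exactly the approach the paper intends: the paper states the corollary as ``a direct consequence of the previous theorem'' without giving a proof, and your plan of reading off the minimal nonvanishing degree from Theorem~\ref{thm:1dim}(2)(b)--(c) and comparing it to the claimed line is precisely that direct consequence. Your arithmetic in the $n = mk + m - r + 1$ subcase is also correct.
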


There are two specific cases of the computation in this section that are worth mentioning. When $q = 1$ and $p = 1$ (i.e. $m = r =2)$, we recover the homology of the type-\textit{B} Artin groups with trivial coefficients. In this case, $\Lambda_2 = k[x_1]/x_1^2 \cong \Lambda[x_1]$ is the exterior algebra on a generator $x_1$, so we have
\[ k\mathcal{Y} \cong \left( \displaystyle \bigoplus_{a=0}^\infty \Sigma^{2a+1} \Lambda[x_1]/x_1 \right) \oplus \left( \displaystyle \bigoplus_{a=1}^\infty \Sigma^{2a} \Lambda[x_1]/x_1 \right) \cong \displaystyle \bigoplus_{a=1}^\infty \Sigma^{a} k \]
and hence
\[\begin{array}{l}
    \mathrm{Tor}^{\Gamma^e}_{*,*} (\mathfrak{M},k) \cong \displaystyle \bigoplus_{a=1}^\infty \Sigma^a \mathrm{Tor}^\Gamma_{*,*} (k,k) \cong \displaystyle \bigoplus_{a=1}^\infty \Sigma^a k[y_1] \otimes \Lambda[z_2]
\end{array} \]
where $y_1 \in \mathrm{Tor}_{1,1}$ and $z_2 \in \mathrm{Tor}_{1,2}$ \cite{etw17}. Observe that this is an infinite sum of the cohomologies of braid groups, which exhibits the same pattern as Gorjunov's classical computation over $\mathbb{Z}$ \cite{gor78}. More explicitly, Theorem~\ref{thm:1dim} gives
\[H_j (B_n; V^{\otimes n} \otimes W) = \begin{cases}
        k \hspace{.4in} \text{ for } j= 0,n\\
        k \oplus k \quad \text{ for } 1 \le j \le n-1\\
        0 \hspace{.4in} \text{ else.}
        \end{cases}\]

The second case of interest is when $q = 1$ and $p = -1$ (i.e. $m = 2, r = 1$); in this case, the braid representation of the Artin group $B_n$ tracks the parity of the number of generators $\tau_n$ in the decomposition of a braid, i.e. whether the number of times a braid wraps around its pure last strand is even or odd. Theorem~\ref{thm:1dim} gives
\[H_j (B_n; V^{\otimes n} \otimes W) = \begin{cases}
        k \quad \text{ for } n =0, j=0\\
        k \quad \text{ for } n=2a \text{ } (a \ge 1), \text{ and } j=n-1,n\\
        0 \quad \text{ else.}
        \end{cases}\]
In particular, the homology of all odd-numbered Artin groups $B_{2a+1}$ vanishes. When $n = 2a$, only the top two homology groups are nontrivial and have rank $1$; the vanishing line of the homology of the even-numbered Artin groups $B_{2a}$ with these coefficients is $j < n-1$, as deduced in Corollary~\ref{cor:1dim}.


\section{Cellular homology of configuration spaces}\label{sec:fnf}

In this section, we will give a geometric explanation for Corollary~\ref{cor:etw_rewrite}. We will construct a cellular stratification for configuration spaces of the punctured complex plane Conf$_{n}(\mathbb{C}^\times)$, based on the Fox-Neuwirth/Fuks cells of Conf$_n (\mathbb{C})$ introduced in Section~\ref{ssec:fnf}. We will also discuss a geometric motivation for the algebraic structures developed in Section~\ref{ssec:alg_req}.


\subsection{Fox-Neuwirth cellular stratification of Conf$_n (\mathbb{C}^{\times})$}\label{ssec:FNF_Cx}

First, we observe that there is an canonical embedding Conf$_n (\mathbb{C}^\times) \hookrightarrow \mathrm{Conf}_{n+1}(\mathbb{C})$ by inserting the removed origin. This gives a homeomorphic image of Conf$_n(\mathbb{C}^\times)$ as a subspace of Conf$_{n+1} (\mathbb{C})$ consisting of all configurations where the point $O$ at the origin is always fixed. We will give a stratification of this subspace based on the Fox-Neuwirth cellular stratification of Conf$_{n+1} (\mathbb{C})$ developed above. For the rest of this paper, we will indiscriminately use the notation Conf$_n (\mathbb{C}^\times)$ for both the original configuration space of the punctured complex plane and its homeomorphic image in Conf$_{n+1} (\mathbb{C})$.

Given a composition $\lambda$ of $n+1$, we consider the intersection of the cell Conf$_{\lambda}(\mathbb{C})$ and the subspace Conf$_n (\mathbb{C}^\times)$ of Conf$_{n+1}(\mathbb{C})$. Starting with a configuration in Conf$_{\lambda}(\mathbb{C})$, we insist that one of the points must be the fixed point $O$ at the origin. This requirement restricts the configuration in two ways. First, the vertical column that contains $O$ must coincide with the imaginary axis, i.e. the real part of all points on that column must be $0$. This column plays a special role in our stratification and will be recorded by the index $i$. Secondly, as we let the points in a configuration move along the vertical line without colliding in a cell, fixing $O$ implies that it is impossible for points on the imaginary axis to move past it. The number of points on this vertical line with a negative imaginary part is hence fixed and denoted by the index $j$. Therefore, the connected components in the above intersection can be denoted by $e_{(\lambda, i, j)} = \text{Conf}_{(\lambda, i, j)} (\mathbb{C})$ where $\lambda$ is a composition of $n+1$, $i$ is the index of the vertical column that contains $O$, i.e. the imaginary axis ($1 \le i \le l(\lambda)$), and $j$ is the number of points lying below $O$ on the imaginary axis ($0 \le j \le \lambda_i - 1$) (see Figure~\ref{fig:FNF_Cx}). The total order of a configuration in the embedded image of $\mathrm{Conf}_n(\mathbb{C}^\times)$ is inherited from the parent space $\mathrm{Conf}_{n+1}(\mathbb{C})$; in particular, the overall position of $O$ in a configuration in $e_{(\lambda,i,j)}$ is $\iota = j+1+\sum_{m=1}^{i-1} \lambda_m$.

\begin{figure}[t]
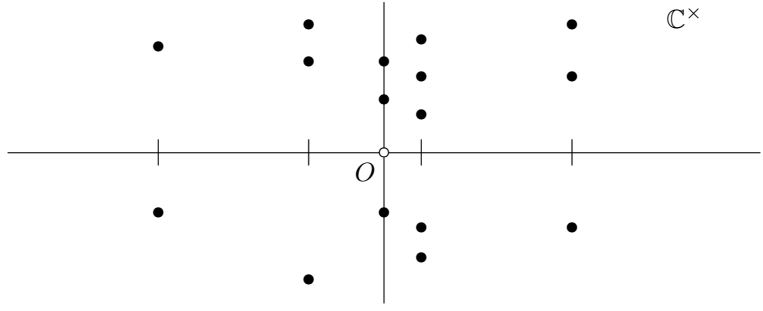

\begin{centering}
\[ \vcenter{	
	\xy
		(-30,14)*{\bullet}; (-30,-8)*{\bullet}; (-30,0)*{|};
		(-10,12)*{\bullet}; (-10,-17)*{\bullet}; (-10,17)*{\bullet}; (-10,0)*+{|};
		(0,7)*{\bullet}; (0,12)*{\bullet}; (0,-8)*{\bullet}; (0,-0.08)*{\circ}; (-2.5,-2.5)*{O};
		(5,10)*{\bullet}; (5,-10)*{\bullet}; (5,15)*{\bullet}; (5,-14)*{\bullet}; (5,5)*{\bullet}; (5,0)*+{|};
		(25,10)*{\bullet}; (25,-10)*{\bullet}; (25,17)*{\bullet}; (25,0)*+{|};
		{\ar@{-} (-50,0)*{}; (-0.65,0)*{}}; {\ar@{-} (0.65,0)*{}; (50,0)*{}};
		{\ar@{-} (0,-20)*{}; (0,-0.65)*{}}; {\ar@{-} (0,0.65)*{}; (0,20)*{}};
		(40,18)*{ \mathbb{C}^\times};
	\endxy
	}
\]
	\caption{A configuration in $e_{((2,3,4,5,3),3,1)} \subset$ Conf$_{16} (\mathbb{C}^\times)$. The removed origin $O$ (mapped to a fixed point in the embedded image of Conf$_{16} (\mathbb{C}^\times)$ in Conf$_{17} (\mathbb{C})$) lies on the third vertical column from the left with one point below.}
	\label{fig:FNF_Cx}
\end{centering}
\end{figure}

The spaces $e_{(\lambda,i,j)}$ then provide the positive dimension cells for our cellular decomposition of Conf$_n(\mathbb{C}^\times) \cup \{ \infty \}$. Each cell $e_{(\lambda,i,j)}$ is homeomorphic to the product
\[\Big[ \mathrm{Conf}_{i-1}(\mathbb{R}) \times \mathrm{Conf}_{l(\lambda)-i}(\mathbb{R}) \Big] \times \displaystyle \prod^{l(\lambda)}_{k=1,k\ne i} \mathrm{Conf}_{\lambda_k} (\mathbb{R}) \times \Big[ \mathrm{Conf}_j (\mathbb{R}) \times \mathrm{Conf}_{\lambda_i - j-1} (\mathbb{R}) \Big].\]
The first bracket represents the configurations of the vertical columns on the left and right of the imaginary axis, i.e. recording the real parts of the points. The latter bracket keeps track of the imaginary parts of points below and above $O$ on the imaginary axis, while the middle product records the same information for those on all other vertical lines. By applying the isomorphism Conf$_k(\mathbb{R}) \cong \mathbb{R}^k$, we see that the cell $e_{(\lambda,i,j)}$ has dimension $n+l(\lambda)-1$; loosely speaking, compared to the classical Fox-Neuwirth cells indexed by the same composition $\lambda$, we lost two dimensions due to fixing the real and imaginary parts of the point $O$.

As in the Fox-Neuwirth cellular decomposition of Conf$_n(\mathbb{C})$, the boundary of a cell is obtained in two ways. For the first type, besides letting points in a configuration approach another or infinity, we also allow moving points towards the punctured origin; in this case, the boundary is still the point at infinity for the same reason. The second type of boundary again occurs by horizontally joining two adjacent vertical columns of the configuration without colliding the points. However, due to the second restriction on a configuration in $e_{(\lambda,i,j)}$, namely points below the fixed point $O$ cannot move across it on the imaginary axis, the boundary cells obtained this way have four general forms, depending on the positions of the columns relative to the imaginary axis and on whether this axis itself is among those combined. In particular, when combining an adjacent vertical column with the imaginary axis, we must keep track of the number of points going below $O$, i.e. adding to the index $j$. In summary:

\begin{prop}
The space $\mathrm{Conf}_n(\mathbb{C}^\times) \cup \{ \infty \}$ may be presented as a CW complex whose positive dimension cells $e_{(\lambda, i, j)} = \mathrm{Conf}_{(\lambda, i, j)}(\mathbb{C})$ (of dimension $n+l(\lambda)-1$) are indexed by triples $(\lambda,i,j)$, where $\lambda$ is an ordered partition of $n+1$, $i$ is the index of the imaginary axis in the configuration ($1 \le i \le l(\lambda)$), i.e. there are $i-1$ vertical columns to the left of the imaginary axis, and $j$ is the number of points with zero real parts and negative imaginary parts ($0 \le j \le \lambda_i - 1$).

Let $\rho^m = (\lambda_1, ..., \lambda_m + \lambda_{m+1}, ..., \lambda_{l(\lambda)})$ be the coarsening of $\lambda$ obtained by summing $\lambda_m$ and $\lambda_{m+1}$ $(1 \le m < l(\lambda))$. The codimension-$1$ boundary cells of $e_{(\lambda, i, j)}$ have four general forms:
\begin{enumerate}
\item $e_{(\rho^m, i-1, j)}$ \hfill $1 \le m < i-1$,
\item $e_{(\rho^m, i, j})$ \hfill $i < m < l(\lambda)$,
\item $e_{(\rho^{i-1}, i-1, j+h)}$ \hfill $0 \le h \le \lambda_{i-1}$,
\item $e_{(\rho^i, i, j + h)}$ \hfill $0 \le h \le \lambda_{i+1}$,
\end{enumerate}
where $h$ denotes the number of points going below the origin $O$ when combining the imaginary axis with the column on the left (3) or right (4).
\end{prop}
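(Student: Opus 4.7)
The plan is to obtain the cellular stratification of $\mathrm{Conf}_n(\mathbb{C}^\times) \cup \{\infty\}$ by restricting the Fox-Neuwirth/Fuks stratification of $\mathrm{Conf}_{n+1}(\mathbb{C}) \cup \{\infty\}$ along the embedding $\mathrm{Conf}_n(\mathbb{C}^\times) \hookrightarrow \mathrm{Conf}_{n+1}(\mathbb{C})$ that sends a configuration to itself together with the origin $O$. Concretely, for each composition $\lambda$ of $n+1$, I will intersect the classical cell $\mathrm{Conf}_\lambda(\mathbb{C})$ with (the image of) $\mathrm{Conf}_n(\mathbb{C}^\times)$, and show that this intersection is the disjoint union of the cells $e_{(\lambda,i,j)}$ indexed by the column of $O$ and the number of points strictly below it on that column.

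First I would verify that each $e_{(\lambda,i,j)}$ is a Euclidean cell of the predicted dimension: the condition that $O$ lies on the $i^{\mathrm{th}}$ column forces that column to coincide with the imaginary axis, while the condition that exactly $j$ points sit below $O$ partitions $\mathrm{Conf}_{\lambda_i}(\mathbb{R})$ into the product $\mathrm{Conf}_j(\mathbb{R}) \times \mathrm{Conf}_{\lambda_i - j - 1}(\mathbb{R})$ corresponding to points below and above $O$. Combined with the real-line configuration factors for the columns on either side of the imaginary axis and the usual vertical configuration factors for the other columns, this realizes the product in the excerpt and hence gives $e_{(\lambda,i,j)} \cong \mathbb{R}^{n+l(\lambda)-1}$ after using $\mathrm{Conf}_k(\mathbb{R}) \cong \mathbb{R}^k$ and subtracting the two dimensions lost to fixing the real and imaginary parts of $O$. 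The CW axioms are inherited from the ambient stratification once one checks that the characteristic maps of the $e_{(\lambda,i,j)}$ extend continuously to maps from closed disks into $\mathrm{Conf}_n(\mathbb{C}^\times) \cup \{\infty\}$, which follows by the same degeneration analysis as in the classical case.

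For the boundary description, the first type of degeneration (a point escaping to $\infty$, colliding with another point, or, in the punctured setting, approaching $O$) produces the point at infinity and so contributes no positive-dimensional boundary cells. The substantive case is horizontally merging the $m^{\mathrm{th}}$ and $(m{+}1)^{\mathrm{st}}$ columns, which coarsens $\lambda$ to $\rho^m$. I would split this into four subcases according to the position of the imaginary axis relative to the columns being joined. If both merged columns lie strictly to the left of the imaginary axis ($m+1 < i$) its new index becomes $i-1$ and $j$ is untouched, giving form (1); symmetrically, both strictly to the right ($m > i$) gives form (2) with index $i$ unchanged. When $m = i-1$ or $m = i$, the merge involves the imaginary axis itself: the points of the neighboring column interleave with those of the imaginary axis, and the new value of $j$ records how many of the merged column's points end up below $O$. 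This interleaving parameter $h$ ranges over $0,\ldots,\lambda_{i-1}$ in case (3) and over $0,\ldots,\lambda_{i+1}$ in case (4), producing the families of boundary cells $e_{(\rho^{i-1},i-1,j+h)}$ and $e_{(\rho^i,i,j+h)}$ respectively.

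The main obstacle I expect is the bookkeeping in the last two cases: one must carefully verify that every choice of $h$ actually appears as a limit of configurations in $e_{(\lambda,i,j)}$, and that these limits exhaust the codimension-one strata arising when the imaginary axis is merged with an adjacent column. This amounts to a direct shuffle-counting argument, where each $h$ corresponds to the way a point of the merging column can slide past $O$ along the resulting common vertical line, and no two shuffles produce the same boundary stratum. Once this is established, together with the straightforward cases (1) and (2), the complete list of codimension-one boundaries is as claimed.
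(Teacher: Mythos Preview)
Your proposal is correct and follows essentially the same approach as the paper: the proposition there is stated as a summary of the preceding discussion, which restricts the Fox-Neuwirth/Fuks stratification along the embedding $\mathrm{Conf}_n(\mathbb{C}^\times)\hookrightarrow\mathrm{Conf}_{n+1}(\mathbb{C})$, identifies the connected components of each intersection by the column index $i$ of $O$ and the count $j$ of points below it, exhibits the product decomposition giving dimension $n+l(\lambda)-1$, and then analyzes the column-merging degenerations exactly as you do in your four subcases.
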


From this we can write down an explicit cellular chain complex for Conf$_n (\mathbb{C}^\times) \cup \{ \infty \}$. First, we will develop some combinatorial concepts needed to state the definition of this complex.

Recall that a $(p,q)$-shuffle $\gamma : \{1,...,p\} \sqcup \{1,...,q\} \to \{1,...,p+q\}$ is a bijection that preserves orders on both $\{1,...,p\}$ and $\{1,...,q\}$. Alternatively, a $(p,q)$-shuffle can be treated as a permutation in $S_{p+q}$ that preserves orders on the first $p$ and the last $q$ elements. In the discussion below, we will primarily refer to them by the latter definition.

\begin{defn}\label{defn:pqhj_shuf}
For $0 \le h \le q$ and $0 \le j \le p-1$, a $(p,(q,h),j)$\textit{-shuffle} is defined to be a $(p,q)$-shuffle that sends $j+1$ to $j+h+1$. Similarly, for $0 \le h \le p$ and $0 \le j \le q-1$, a $((p,h),q,j)$\textit{-shuffle} is a $(p,q)$-shuffle that sends $p+j+1$ to $h+j+1$.
\end{defn}

For convenience, we will refer to the designated elements in the above definition as the \textit{marked elements}; they will later correspond to the fixed point $O$ in a configuration in Conf$_n(\mathbb{C}^\times)$. The naming conventions of these shuffles are geometrically motivated: for example, the $(p,(q,h),j)$-shuffles arise when we combine the imaginary axis with $p$ points in total and $j$ points below the fixed point $O$ from the right with a column containing $q$ points, sending $h$ out of $q$ points below $O$ in the process. Similarly, the $((p,h),q,j)$-shuffles occur when the imaginary axis with $q$ points in total and $j$ points below $O$ is joined from the left by a column with $p$ points, sending $h$ out of $p$ points below $O$.

Consider a $(p,(q,h),j)$-shuffle $\gamma$. Since $\gamma$ as a $(p,q)$-shuffle preserves orders on the first $p$ and the last $q$ elements, and $\gamma$ sends the marked element from $j+1$ to $j+h+1$, it must follows that
\[ \gamma(p+1) < \gamma(p+2) < ... < \gamma(p+h) < j+h+1;\]
loosely speaking, the elements in the integer interval $\llbracket p+1, p+h \rrbracket$ must ``move left'' on the number line to fill in the $h$ holes left behind by the move of the marked element. This observation leads to a useful decomposition of the $(p,(q,h),j)$-shuffles.

\begin{prop}\label{prop:pqhj_decomp}
There is a unique decomposition of a $(p,(q,h),j)$-shuffle into a sequence of three permutations: a fixed $(p,q)$-shuffle that maps $\llbracket p+1, p+h \rrbracket$ onto $\llbracket j+1, j+h\rrbracket$, followed by a $(j,h)$-shuffle on $\llbracket 1, j+h \rrbracket$ and a $(p-j-1,q-h)$-shuffle on the last $\llbracket j+h+2,p+q \rrbracket$ (see Figure~\ref{fig:pqhj_decomp}). As a consequence, there is a bijection
\[\mathrm{Sh}(p,(q,h),j) \cong \mathrm{Sh}(j,h) \times \mathrm{Sh}(p-j-1,q-h)\]
where $\mathrm{Sh}(p,(q,h),j)$, $\mathrm{Sh}(j,h)$ and $\mathrm{Sh}(p-j-1,q-h)$ are the sets of $(p,(q,h),j)$-, $(j,h)$- and $(p-j-1,q-h)$-shuffles, respectively.
\end{prop}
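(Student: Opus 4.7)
The plan is to prove this decomposition by constructing an explicit bijection between $\mathrm{Sh}(p,(q,h),j)$ and $\mathrm{Sh}(j,h) \times \mathrm{Sh}(p-j-1,q-h)$. Let $\omega$ denote the unique $(p,q)$-shuffle that sends $\llbracket p+1, p+h\rrbracket$ onto $\llbracket j+1, j+h\rrbracket$ in order: concretely, $\omega$ is the identity on $\llbracket 1,j\rrbracket$ and on $\llbracket p+h+1, p+q\rrbracket$, sends $j+1 \mapsto j+h+1$ and $\llbracket j+2, p\rrbracket \mapsto \llbracket j+h+2, p+h\rrbracket$ order-preservingly, and sends $\llbracket p+1, p+h\rrbracket \mapsto \llbracket j+1, j+h\rrbracket$ order-preservingly. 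In particular $\omega$ is itself a $(p,(q,h),j)$-shuffle, so it qualifies as the ``fixed'' shuffle mentioned in the statement.

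First I would unpack what the marking condition $\gamma(j+1) = j+h+1$ forces. Since $\gamma$ preserves order on the first $p$ elements, $\gamma(\llbracket 1, j\rrbracket) \subset \llbracket 1, j+h\rrbracket$ and $\gamma(\llbracket j+2, p\rrbracket) \subset \llbracket j+h+2, p+q\rrbracket$. Since $\gamma$ also preserves order on the last $q$ elements and the $h$ positions in $\llbracket 1, j+h\rrbracket$ not occupied by $\gamma(1), \ldots, \gamma(j)$ must be filled by the leftmost elements of $\gamma(\llbracket p+1, p+q\rrbracket)$, it follows that $\gamma(\llbracket p+1, p+h\rrbracket) \subset \llbracket 1, j+h\rrbracket$ and $\gamma(\llbracket p+h+1, p+q\rrbracket) \subset \llbracket j+h+2, p+q\rrbracket$. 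Thus $\gamma$ restricts to two independent interleavings on the disjoint intervals $\llbracket 1, j+h\rrbracket$ and $\llbracket j+h+2, p+q\rrbracket$, fixing the position $j+h+1$ in between.

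The bijection is then natural. Given $\gamma$, let $\beta_1$ be the $(j,h)$-shuffle on $\llbracket 1, j+h\rrbracket$ extracted by recording the positions occupied by $\gamma(\llbracket 1, j\rrbracket)$ versus $\gamma(\llbracket p+1, p+h\rrbracket)$, and let $\beta_2$ be the $(p-j-1, q-h)$-shuffle on $\llbracket j+h+2, p+q\rrbracket$ extracted similarly. Conversely, given $(\beta_1, \beta_2)$, set $\gamma := (\beta_1 \cdot \beta_2) \circ \omega$, where $\beta_1, \beta_2$ act as the identity outside their designated intervals (and commute since these intervals are disjoint). The two maps are mutually inverse by construction, so it only remains to verify that $(\beta_1 \beta_2)\circ \omega$ is a legitimate $(p,(q,h),j)$-shuffle; this reduces to checking that this composition preserves order on $\llbracket 1, p\rrbracket$ and on $\llbracket p+1, p+q\rrbracket$ and sends $j+1$ to $j+h+1$, all of which follow immediately from the explicit form of $\omega$ combined with the order-preservation properties of the two $\beta_i$ on their respective sub-blocks.

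I expect no conceptual obstacle here; the only difficulty is notational. The main thing that must be handled with care is distinguishing the sub-blocks of each $\beta_i$ within its interval (so that ``preserves order on the first block'' does not get conflated across the two decompositions), and confirming that the image of $\omega$ carves $\llbracket 1, p+q\rrbracket$ into exactly the two disjoint ranges on which the $\beta_i$ then act independently.
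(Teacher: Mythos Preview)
Your proposal is correct and follows essentially the same approach as the paper. Both arguments define the same fixed shuffle $\omega$, show that the marking condition forces $\gamma$ to restrict to independent shuffles on $\llbracket 1, j+h\rrbracket$ and $\llbracket j+h+2, p+q\rrbracket$, and then identify $\gamma\omega^{-1}$ with the product $\beta_1\beta_2$ of these two restricted shuffles; the paper simply spells out the verification that $\gamma\omega^{-1}$ preserves the relevant subintervals a bit more explicitly than you do.
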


\begin{figure}[t]
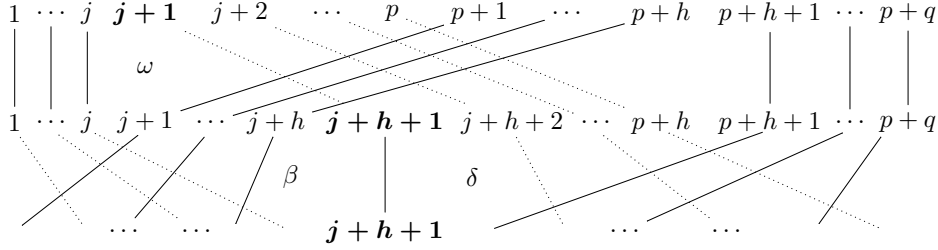

    \centering
    \[ 
\resizebox{\textwidth}{!}{
	\xy
		{\ar@{-} (0,15)*+{1}; (0,30)*+{1} };
		{\ar@{-} (5,15)*+{\cdots}; (5,30)*+{\cdots} }; 
		{\ar@{-} (10,15)*+{j}; (10,30)*+{j} }; 
		{\ar@{-} (18,15)*+{j+1}; (64,30)*+{p+1} }; 
		{\ar@{..} (18,30)*+{\boldsymbol{j+1}}; (51,15)*+{\boldsymbol{j+h+1}} };
		{\ar@{-} (27,15)*+{\cdots}; (76,30)*+{\cdots} };
		{\ar@{-} (36,15)*+{j+h}; (89,30)*+{p+h} };
		{\ar@{..} (31,30)*+{j+2}; (68.5,15)*+{j+h+2} };
		{\ar@{..} (43,30)*+{\cdots}; (80,15)*+{\cdots} };
		{\ar@{..} (52,30)*+{p}; (89,15)*+{p+h} };
		{\ar@{-} (104,15)*+{p+h+1}; (104,30)*+{p+h+1} };
		{\ar@{-} (115,15)*+{\cdots}; (115,30)*+{\cdots} };
		{\ar@{-} (123,15)*+{p+q}; (123,30)*+{p+q} };
		{\ar@{-} (51,14)*+{}; (51,0)*+{\boldsymbol{j+h+1}} };
		{\ar@{..} (0,14)*+{}; (10,0)*+{} };
		{\ar@{..} (5,14)*+{}; (25,0)*+{\cdots} }; 
		{\ar@{..} (10,14)*+{}; (38,0)*+{} };
		{\ar@{-} (18,14)*+{}; (0,0)*+{} };
		{\ar@{-} (36,14)*+{}; (30,0)*+{} };
		{\ar@{-} (27,14)*+{}; (15,0)*+{\cdots} };
		{\ar@{..} (68.5,14)*+{}; (76,0)*+{} };
		{\ar@{..} (89,14)*+{}; (120,0)*+{} };
		{\ar@{..} (80,14)*+{}; (98,0)*+{\cdots} };
		{\ar@{-} (104,14)*+{}; (65,0)*+{} };
		{\ar@{-} (115,14)*+{}; (84,0)*+{\cdots} };
		{\ar@{-} (120,14)*+{}; (110,0)*+{} };
		{(18,22.5)*+{\omega}};
		{(38,7.5)*+{\beta}};
		{(63,7.5)*+{\delta}};
	\endxy
	}
\]
  \caption{Decomposition of a $(p,(q,h),j)$-shuffle $\gamma$ into a sequence of three permutations: the fixed permutation $\omega$, followed by a $(j,h)$-shuffle $\beta$ on the integer interval $\llbracket 1,j+h \rrbracket$ and a $(p-j-1,q-h)$-shuffle $\delta$ on $\llbracket j+h+2,p+q \rrbracket$. The marked element (in bold) is mapped from $j+1$ to $j+h+1$ as stated in Definition~\ref{defn:pqhj_shuf}. The lift $\tilde{\gamma}$ is equivalent to the product of lifts $\tilde{\delta}\tilde{\beta}\tilde{\omega}$.}
  \label{fig:pqhj_decomp}
\end{figure}

\begin{proof}
Let $\gamma$ be a $(p,(q,h),j)$-shuffle. Explicitly, $\omega$ is the permutation of $p+q$ elements defined by
\[\omega(m) = \begin{cases} m \hspace{2in} 1 \le m \le j \\
m+h \hfill j+1 \le m \le p \\
m-p+j \hfill p+1 \le m \le p+h \\
m \hfill p+h+1 \le m \le p+q. \end{cases} \]
Observe that $\omega$ maps the integer interval $\llbracket p+1, p+h \rrbracket$ to $\llbracket j+1,j+h \rrbracket$, thus sending the first $h$ elements in the second set (while maintaining their order) left past the marked element now at $j+h+1$.

Consider $\gamma$ as a permutation of $p+q$ elements. Since $\gamma$ preserves order on the first $p$ elements, $\gamma(m) \ge \gamma(j+1) = j+h+1$ for all $j+1 \le m \le p$. On the other hand, $\gamma(p+h)$ must be smaller than $j+h+1$, since otherwise $\gamma(m) \ge \gamma(p+h) \ge j+h+1$ for all $p+h \le m \le p+q$ and hence $\gamma$ maps at least $p+q-j-h+1$ elements bijectively onto $\llbracket j+h+1, p+q \rrbracket$, a contradiction. Since $\gamma$ also preserves order on the last $q$ elements, $\gamma(m) \le \gamma(p+h) < j+h+1$ for all $p+1 \le m \le p+h$. It follows that $\gamma(\llbracket 1, j\rrbracket \sqcup \llbracket p+1, p+h\rrbracket) = \llbracket 1, j+h\rrbracket$ and $\gamma(\llbracket j+1, p\rrbracket \sqcup \llbracket p+h+1, p+q\rrbracket) = \llbracket j+h+1, p+q\rrbracket$; moreover, $\gamma$ is order-preserving on each of the four component intervals in these two disjoint unions. Observe that $\omega^{-1}$ restricts to order-preserving bijections:
\[\begin{array}{c} \llbracket 1, j\rrbracket \xrightarrow{\mathrm{id}} \llbracket 1, j\rrbracket, \\[5pt]
\llbracket j+1, j+h\rrbracket \xrightarrow{+(p-j)} \llbracket p+1, p+h\rrbracket, \\[5pt]
\llbracket j+h+1, p+h\rrbracket \xrightarrow{-h} \llbracket j+1, p\rrbracket, \text{ and}\\[5pt]
\llbracket p+h+1, p+q\rrbracket \xrightarrow{\mathrm{id}} \llbracket p+h+1, p+q\rrbracket.\end{array}\]
Hence $\gamma \omega^{-1}$ maps each of the intervals $\llbracket 1, j+h \rrbracket$ and $\llbracket j+h+1, p+q \rrbracket$ bijectively onto itself, while preserving orders on the subintervals $\llbracket 1, j \rrbracket$, $\llbracket j+1, j+h \rrbracket$, $\llbracket j+h+1, p+h \rrbracket$, and $\llbracket p+h+1, p+q \rrbracket$. Since $\gamma \omega^{-1}(j+h+1) = j+h+1$, the restrictions of $\gamma \omega^{-1}$ on $\llbracket 1, j+h \rrbracket$ and $\llbracket j+h+2, p+q \rrbracket$ then form a $(j,h)$-shuffle and a $(p-j-1,q-h)$-shuffle, respectively. Thus we may write $\gamma \omega^{-1} = \delta \beta$, where $\beta$ nontrivially only on $\llbracket 1, j+h \rrbracket$ by a $(j,h)$-shuffle, and $\delta$ acts nontrivially only on $\llbracket j+h+2, p+q\rrbracket$ by a $(p-j-1,q-h)$-shuffle. This gives a decomposition $\gamma = \delta \beta \omega$ as desired. Since $\omega$ is fixed, this decomposition is unique if so is the decomposition of $\gamma \omega^{-1}$ into the shuffles $\beta$ and $\delta$ for any $(p,(q,h),j)$-shuffle $\gamma$, which is evident.

The existence and uniqueness of the decomposition of $(p,(q,h),j)$-shuffles above give an injection
\[\mathrm{Sh}(p,(q,h),j) \hookrightarrow \mathrm{Sh}(j,h) \times \mathrm{Sh}(p-j-1,q-h).\]
On the other hand, given a $(j,h)$-shuffle $\beta$ and a $(p-j-1,q-h)$-shuffle $\delta$, from the previous argument it is easy to see that $\gamma = \delta' \beta' \omega$ is a $(p,(q,h),j)$-shuffle, where $\beta'$ acts as $\beta$ on $\llbracket 1, j+h\rrbracket$ and $\mathrm{id}$ else, while $\delta'$ acts as $\delta$ on $\llbracket j+h+2,p+q \rrbracket$ and $\mathrm{id}$ else. This shows surjectivity of the set identification and thus completes our proof.
\end{proof}

Observe that given a $(p,(q,h),j)$-shuffle $\gamma = \delta \beta \omega$, for every $m \in \llbracket 1, p \rrbracket$ we have $m \le \omega(m) \le \beta\omega(m) \le \delta\beta\omega(m)$. Thus in the product of lifts $\tilde{\delta} \tilde{\beta} \tilde{\omega}$, the strand originally starting at $m$ is always behind throughout all component lifts, and hence matches the strand starting at $m$ in the lift of $\gamma$. Similar observation shows that the strand starting at each $m \in \llbracket p+1, p+q \rrbracket$ is always in the front for all component lifts. Therefore, the lift of the shuffle $\gamma$ is in fact given by the product of the lifts of the component shuffles, i.e. $\tilde{\gamma} = \tilde{\delta} \tilde{\beta} \tilde{\omega}$.

A similar observation applies to the $((p,h),q,j)$-shuffles: since a $((p,h),q,j)$-shuffle $\gamma$ sends the marked element from $p+j+1$ to $h+j+1$, it must follows that
\[ j+h+1 < \gamma(h+1) < \gamma(h+2) < ... < \gamma(p);\]
namely, the elements in $\llbracket h+1, p \rrbracket$ must ``move right'' on the number line to fill in the $p-h$ holes left behind by the move of the marked element. The same argument as above proves the following decomposition theorem for the $((p,h),q,j)$-shuffles.

\begin{figure}[t]
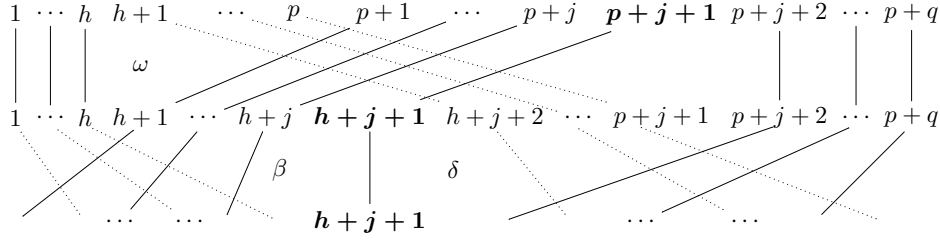

    \centering
    \[ 
\resizebox{\textwidth}{!}{
	\xy
		{\ar@{-} (0,15)*+{1}; (0,30)*+{1} };
		{\ar@{-} (5,15)*+{\cdots}; (5,30)*+{\cdots} }; 
		{\ar@{-} (10,15)*+{h}; (10,30)*+{h} }; 
		{\ar@{-} (18,15)*+{h+1}; (53,30)*+{p+1} }; 
		{\ar@{..} (18,30)*+{h+1}; (69,15)*+{h+j+2} };
		{\ar@{-} (27,15)*+{\cdots}; (65,30)*+{\cdots} };
		{\ar@{-} (36,15)*+{h+j}; (77,30)*+{p+j} };
		{\ar@{-} (93,30)*+{\boldsymbol{p+j+1}}; (51,15)*+{\boldsymbol{h+j+1}} };
		{\ar@{..} (31,30)*+{\cdots}; (81,15)*+{\cdots} };
		{\ar@{..} (40,30)*+{p}; (93,15)*+{p+j+1} };
		{\ar@{-} (110,15)*+{p+j+2}; (110,30)*+{p+j+2} };
		{\ar@{-} (121,15)*+{\cdots}; (121,30)*+{\cdots} };
		{\ar@{-} (129,15)*+{p+q}; (129,30)*+{p+q} };
		{\ar@{-} (51,14)*+{}; (51,0)*+{\boldsymbol{h+j+1}} };
		{\ar@{..} (0,14)*+{}; (10,0)*+{} };
		{\ar@{..} (5,14)*+{}; (25,0)*+{\cdots} }; 
		{\ar@{..} (10,14)*+{}; (38,0)*+{} };
		{\ar@{-} (18,14)*+{}; (0,0)*+{} };
		{\ar@{-} (36,14)*+{}; (30,0)*+{} };
		{\ar@{-} (27,14)*+{}; (15,0)*+{\cdots} };
		{\ar@{..} (68.5,14)*+{}; (80,0)*+{} };
		{\ar@{..} (89,14)*+{}; (125,0)*+{} };
		{\ar@{..} (80,14)*+{}; (105,0)*+{\cdots} };
		{\ar@{-} (110,14)*+{}; (70,0)*+{} };
		{\ar@{-} (121,14)*+{}; (90,0)*+{\cdots} };
		{\ar@{-} (129,14)*+{}; (115,0)*+{} };
		{(18,22.5)*+{\omega}};
		{(38,7.5)*+{\beta}};
		{(63,7.5)*+{\delta}};
	\endxy
	}
\]
    \caption{Decomposition of a $((p,h),q,j)$-shuffle $\gamma$ into a sequence of three permutations: a  fixed $(p,q)$-shuffle $\omega$ that sends $\llbracket h+1, p \rrbracket$ to $\llbracket h+j+2, p+j+1 \rrbracket$, followed by an $(h,j)$-shuffle $\beta$ on $\llbracket 1,h+j \rrbracket$ and a $(p-h,q-j-1)$-shuffle $\delta$ on $\llbracket h+j+2,p+q \rrbracket$. The marked element (in bold) is mapped from $p+j+1$ to $h+j+1$ as stated in Definition~\ref{defn:pqhj_shuf}. The lift $\tilde{\gamma}$ can be observed to be equivalent to the product of lifts $\tilde{\delta}\tilde{\beta}\tilde{\omega}$.}
    \label{fig:phqj_decomp}
\end{figure}

\begin{prop}\label{prop:phqj_decomp}
There is a unique decomposition of a $((p,h),q,j)$-shuffle into a sequence of three permutations: a  fixed $(p,q)$-shuffle that sends $\llbracket h+1, p \rrbracket$ to $\llbracket h+j+2, p+j+1 \rrbracket$, followed by an $(h,j)$-shuffle on $\llbracket 1,h+j \rrbracket$ and a $(p-h,q-j-1)$-shuffle on $\llbracket h+j+2,p+q \rrbracket$ (see Figure~\ref{fig:phqj_decomp}). As a consequence, there is a bijection
\[\mathrm{Sh}((p,h),q,j) \cong \mathrm{Sh}(h,j) \times \mathrm{Sh}(p-h,q-j-1)\]
where $\mathrm{Sh}((p,h),q,j)$ denotes the set of $((p,h),q,j)$-shuffles.
\end{prop}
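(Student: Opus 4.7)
The plan is to run essentially the same argument as the proof of Proposition~\ref{prop:pqhj_decomp}, but with the roles of the two blocks of the underlying $(p,q)$-shuffle swapped, since the marked element now lives in the second block at position $p+j+1$ and travels leftward to $h+j+1$. The whole proof amounts to constructing a distinguished ``transport'' shuffle $\omega$ that accomplishes exactly this move while leaving as much order intact as possible, and then showing that every $((p,h),q,j)$-shuffle factors uniquely through it.

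First I would write down the fixed $(p,q)$-shuffle $\omega$ that fixes $\llbracket 1,h\rrbracket$ pointwise, shifts $\llbracket h+1,p\rrbracket$ rightward onto $\llbracket h+j+2,p+j+1\rrbracket$, pulls $\llbracket p+1,p+j+1\rrbracket$ leftward onto $\llbracket h+1,h+j+1\rrbracket$ (in particular sending the marked element $p+j+1$ to $h+j+1$), and fixes $\llbracket p+j+2,p+q\rrbracket$ pointwise. A direct check shows that $\omega$ preserves order on both blocks, is therefore itself a $((p,h),q,j)$-shuffle, and realizes the map $\llbracket h+1,p\rrbracket \to \llbracket h+j+2,p+j+1\rrbracket$ prescribed in the statement.

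Given any $((p,h),q,j)$-shuffle $\gamma$, the core step is to establish the block identifications
\[
\gamma\bigl(\llbracket 1,h\rrbracket\sqcup\llbracket p+1,p+j\rrbracket\bigr)=\llbracket 1,h+j\rrbracket,\qquad
\gamma\bigl(\llbracket h+1,p\rrbracket\sqcup\llbracket p+j+2,p+q\rrbracket\bigr)=\llbracket h+j+2,p+q\rrbracket.
\]
This mirrors the cardinality argument in the earlier proof: if $\gamma(h+1) \le h+j$, then order preservation on the first block together with $\gamma(\llbracket p+1,p+j\rrbracket) \subseteq \llbracket 1,h+j\rrbracket$ would force $h+j+1$ elements into a set of size $h+j$, a contradiction. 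Composing with $\omega^{-1}$ then yields a permutation that fixes $h+j+1$ and restricts to order-preserving bijections on $\llbracket 1,h+j\rrbracket$ and on $\llbracket h+j+2,p+q\rrbracket$, which gives the desired unique factorization $\gamma = \delta\beta\omega$ with $\beta \in \mathrm{Sh}(h,j)$ and $\delta \in \mathrm{Sh}(p-h,q-j-1)$. The reverse assignment $(\beta,\delta) \mapsto \delta\beta\omega$ is easily verified to land in $\mathrm{Sh}((p,h),q,j)$, proving the claimed bijection. Finally, the ``strands never overtake across blocks'' remark from the previous proof applies verbatim here and yields the braid lift identity $\tilde{\gamma} = \tilde{\delta}\tilde{\beta}\tilde{\omega}$.

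The only real obstacle is bookkeeping: the four subintervals must be paired correctly, and the explicit formula for $\omega$ must be reconciled carefully with Figure~\ref{fig:phqj_decomp}. Beyond this, no genuinely new ideas are required beyond those already appearing in the proof of Proposition~\ref{prop:pqhj_decomp}.
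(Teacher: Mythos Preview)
Your proposal is correct and follows precisely the approach the paper itself takes: the paper does not give a separate detailed proof of this proposition but simply notes that ``the same argument as above proves the following decomposition theorem,'' after recording the key inequality $j+h+1 < \gamma(h+1) < \cdots < \gamma(p)$, which is exactly the block identification you establish via the cardinality argument. Your explicit description of $\omega$ and the factorization $\gamma = \delta\beta\omega$ match the paper's Figure~\ref{fig:phqj_decomp}, so there is nothing to add.
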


Recall that $c_{p,q} = \sum_\gamma (-1)^{|\gamma|}$ denotes the sum of the signs of all $(p,q)$-shuffles $\gamma$. Since $(p,(q,h),j)$- and $((p,h),q,j)$-shuffles are essentially elements of the symmetric group, their signs are also well-defined. Let $c_{p,(q,h),j}$ and $c_{(p,h),q,j}$ be the sums of the signs of all $(p,(q,h),j)$- and $((p,h),q,j)$-shuffles, respectively. Based on the decompositions described above, we may express these constants in terms of the constants $c_{i,j}$ corresponding to the component shuffles.

\begin{lem}\label{lem:c_pqhj}
The sums of the signs of all $(p,(q,h),j)$- and $((p,h),q,j)$-shuffles can be computed by the following formulae:
\begin{enumerate}
    \item $c_{p,(q,h),j} = (-1)^{h(p-j)} c_{j,h} c_{p-j-1,q-h}$
    \item $c_{(p,h),q,j} = (-1)^{(j+1)(p-h)} c_{h,j} c_{p-h,q-j-1}$.
\end{enumerate}
\end{lem}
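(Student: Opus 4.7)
The plan is to exploit the bijections given by Propositions~\ref{prop:pqhj_decomp} and~\ref{prop:phqj_decomp}. Both lemmas have essentially identical proofs, so I would carry them out in parallel. For each shuffle $\gamma$ of the relevant type, the cited proposition gives a unique factorization $\gamma = \delta \beta \omega$, where $\omega$ is a single fixed permutation and $\beta, \delta$ range independently over smaller shuffle sets. Since the sign is multiplicative, $\mathrm{sgn}(\gamma) = \mathrm{sgn}(\omega)\,\mathrm{sgn}(\beta)\,\mathrm{sgn}(\delta)$, and summing over all $\beta$ and $\delta$ independently immediately yields
\[
c_{p,(q,h),j} \;=\; \mathrm{sgn}(\omega)\,c_{j,h}\,c_{p-j-1,q-h}, \qquad c_{(p,h),q,j} \;=\; \mathrm{sgn}(\omega')\,c_{h,j}\,c_{p-h,q-j-1},
\]
for the respective fixed permutations $\omega$ and $\omega'$. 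So the entire content of the lemma reduces to computing $\mathrm{sgn}(\omega)$ and $\mathrm{sgn}(\omega')$.

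For part~(1), I would note that $\omega$ acts as the identity on $\llbracket 1,j\rrbracket$ and on $\llbracket p+h+1,p+q\rrbracket$, while on the middle interval $\llbracket j+1,p+h\rrbracket$ it sends the ordered block $(j{+}1,\ldots,p,\,p{+}1,\ldots,p{+}h)$ to $(p{+}1,\ldots,p{+}h,\,j{+}1,\ldots,p)$. This is the block swap of two adjacent blocks of sizes $p-j$ and $h$, which is well known to have sign $(-1)^{h(p-j)}$ (each of the $h$ elements crosses each of the $p-j$ elements exactly once). For part~(2), $\omega'$ fixes the complement of $\llbracket h+1,p+j+1\rrbracket$ and on that interval swaps two adjacent blocks of sizes $p-h$ and $j+1$, yielding $\mathrm{sgn}(\omega') = (-1)^{(j+1)(p-h)}$.

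Combining these sign computations with the factored sums gives exactly the two stated identities. I do not anticipate any real obstacle: the bijective decompositions do all the structural work, and the only remaining arithmetic is reading off the sign of a block transposition. The one small thing to verify carefully is that the decomposition statements are indeed "on the nose" — i.e.\ that the shuffles $\beta$ and $\delta$ of the proposition, embedded back into $S_{p+q}$ via the identity on the complementary intervals, genuinely have the same sign as the original shuffles on $\llbracket 1, j+h\rrbracket$ and on $\llbracket j+h+2, p+q\rrbracket$ (respectively on $\llbracket 1,h+j\rrbracket$ and $\llbracket h+j+2,p+q\rrbracket$). This is immediate since adding fixed points to a permutation does not change its sign.
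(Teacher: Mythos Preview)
Your proposal is correct and follows essentially the same approach as the paper's proof: both use the decompositions of Propositions~\ref{prop:pqhj_decomp} and~\ref{prop:phqj_decomp}, compute the sign of the fixed permutation $\omega$ by recognizing it as a block transposition (the paper phrases this as counting the $h(p-j)$ crossings), and note that embedding $\beta$ and $\delta$ into $S_{p+q}$ by adding fixed points preserves their signs. The only cosmetic difference is that the paper treats part~(2) by the phrase ``a similar argument,'' whereas you spell out the block sizes $p-h$ and $j+1$ explicitly.
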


\begin{proof}
Let $\gamma$ be a $(p,(q,h),j)$-shuffle, and suppose we have a decomposition $\gamma = \delta' \beta' \omega$ as described in the proof of Proposition~\ref{prop:pqhj_decomp} (here $\beta'$ and $\delta'$ are respectively the previously defined lifts of the $(j,h)$-shuffle $\beta$ and the $(p-j-1,q-h)$-shuffle $\delta$ to the symmetric group $S_{p+q}$). Observe that the shuffle $\omega$ swaps $p-j$ points in $\llbracket j+1, p \rrbracket$ across $h$ points in $\llbracket p+1, p+h \rrbracket$ (order-preservingly on each segment), so the number of crossings in $\omega$ is $h(p-j)$; hence the sign of $\omega$ is $(-1)^{h(p-j)}$. Meanwhile, the signs of $\beta'$ and $\delta'$ are equal to those of $\beta$ and $\delta$, respectively. It follows that
\[\begin{split}
    c_{p,(q,h),j} & = \sum_\gamma (-1)^{|\gamma|}  = \sum_{\beta',\delta'}(-1)^{|\omega|}(-1)^{|\beta'|}(-1)^{|\delta'|} = \sum_{\beta,\delta}(-1)^{h(p-j)}(-1)^{|\beta|}(-1)^{|\delta|} \\
    & = (-1)^{h(p-j)} \sum_\beta (-1)^{|\beta|} \sum_\delta (-1)^{|\delta|} = (-1)^{h(p-j)} c_{j,h} c_{p-j-1,q-h}
\end{split}\]
thus the first identity holds. A similar argument proves the second claim.
\end{proof}

We now give the definition for the Fox-Neuwirth cellular chain complex for $\mathrm{Conf}_n(\mathbb{C}^\times) \cup \{\infty\}$.

\begin{defn}[The Fox-Neuwirth complex for Conf$_n (\mathbb{C}^\times) \cup \{\infty\}$]
    Let $D(n)_*$ denote the chain complex which in degree $q$ is generated over $\mathbb{Z}$ by the set of triples $(\lambda,i,j)$ where $\lambda = (\lambda_1, ..., \lambda_{q-n+1})$ is a composition of $n+1$ of length $q-n+1$, $1 \le i \le l(\lambda)$ and $0 \le j \le \lambda_i -1$. The differential $d: D(n)_q \to D(n)_{q-1}$ is given by the formula
    \[\begin{array}{l}
    \displaystyle d(\lambda, i, j) = \sum_{m = 1}^{i-2} (-1)^{m-1} c_{\lambda_m, \lambda_{m+1}} (\rho^m, i-1, j) + \sum_{m = i+1}^{q-n} (-1)^{m-1} c_{\lambda_m, \lambda_{m+1}} (\rho^m, i, j)\\[15pt]
    + (-1)^{i-2} \displaystyle\sum_{h=0}^{\lambda_{i-1}} c_{(\lambda_{i-1},h),\lambda_i, j} (\rho^{i-1}, i-1, j+h) + (-1)^{i-1}\displaystyle\sum_{h=0}^{\lambda_{i+1}} c_{\lambda_i, (\lambda_{i+1},h),j} (\rho^i, i, j+h).\end{array}\]
\end{defn}

As in the original Fox-Neuwirth chain complex, the signs in the formula of the differential result from the induced orientations on the boundary strata, following the general scheme described in \cite{gs12}. Notice that the differential is more complicated than that of the classical Fox-Neuwirth complex, which is a consequence of a larger collection of boundary cells. The proof that the chain complex $D(n)_*$ is well-defined is nontrivial and exhibits the usefulness of the combinatorial identities introduced above.

\begin{prop}
$d^2 = 0$.
\end{prop}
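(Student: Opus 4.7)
The plan is to expand $d^2(\lambda,i,j)$ and organize the resulting terms by the unordered pair of adjacent-column merges that produced them, then verify that each class of pairs cancels using sign considerations together with the combinatorial identities relating the constants $c_{p,q}$, $c_{p,(q,h),j}$, and $c_{(p,h),q,j}$. Concretely, a term of $d^2(\lambda,i,j)$ records a choice of two successive merges: the first merge is one of the four types (1)--(4) appearing in the definition of $d$, and the second merge is a merge in the coarsened composition produced by the first. I will partition these terms by whether each merge involves the axis column.

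The first case is when neither merge touches the axis column. If the two merges are non-adjacent (their merged pairs of indices are disjoint and separated), then the two orders of performing them produce the same coarsening and the same shuffle weight, but the Koszul signs $(-1)^{m-1}(-1)^{m'-1}$ and $(-1)^{m-1}(-1)^{(m')'-1}$ differ by exactly $-1$ (the reindexing of the later merge after the earlier one shifts its position by one), so they cancel in pairs. If the two merges are adjacent, i.e.\ we merge three consecutive columns $\lambda_m,\lambda_{m+1},\lambda_{m+2}$ in the two possible orders, cancellation follows from the classical shuffle identity $c_{\lambda_m,\lambda_{m+1}+\lambda_{m+2}}\,c_{\lambda_{m+1},\lambda_{m+2}} = c_{\lambda_m+\lambda_{m+1},\lambda_{m+2}}\,c_{\lambda_m,\lambda_{m+1}}$, which is exactly the identity that makes the classical Fox-Neuwirth differential square to zero. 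These cases lie entirely on one side of the axis, so the indices $i$ and $j$ track along unchanged and the reduction is literally the classical one.

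The second case is when exactly one of the two merges involves the axis column. If the other merge is non-adjacent to the axis, the two merges commute and again cancel by the Koszul sign; the constant $c_{\lambda_i,(\lambda_{i+1},h),j}$ (or its left analogue) is insensitive to the distant merge and just factors through. If the non-axis merge is adjacent to the axis on the opposite side, the same commutation argument works. The subtle subcase is when the non-axis merge happens on, say, the right of the axis at the position $(i+1,i+2)$, and the other merge combines the axis with its right neighbor: here we either merge columns $i+1$ and $i+2$ first and then the axis with the composite, or we first merge the axis with column $i+1$ (sending $h$ points below) and then the result with column $i+2$ (sending $h'$ more below). The cancellation is reduced to an identity of the form
\[ \sum_{h+h' = H} c_{\lambda_i,(\lambda_{i+1},h),j}\,c_{\lambda_i+\lambda_{i+1},(\lambda_{i+2},h'),j+h}
 = c_{\lambda_{i+1},\lambda_{i+2}}\,c_{\lambda_i,(\lambda_{i+1}+\lambda_{i+2},H),j}, \]
which follows from applying Lemma~\ref{lem:c_pqhj} to rewrite every $c_{\cdot,(\cdot,\cdot),\cdot}$ as a product of classical shuffle constants with an explicit sign, reducing it to the classical three-column identity after recognizing the sum over $h$ as a Vandermonde-type convolution of shuffle counts.

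The third and hardest case is when both merges involve the axis column. This splits further according to whether the two merges are on the same side (first merge axis with the left, second merge axis again with the new left neighbor, or the mirror on the right), or on opposite sides (left then right, or right then left). The opposite-side subcase is easier: the two merges commute as combinatorial operations on $\lambda$, and the pair of sign conventions $(-1)^{i-2}$ and $(-1)^{i-1}$ (with appropriate shifts) delivers the needed cancellation, once we use Lemma~\ref{lem:c_pqhj} to separate the contributions below and above the origin. The same-side subcase requires the associativity identity for a triple merge where all three columns contribute points that may or may not cross the origin, and is the technical heart of the proof. The plan here is to apply Lemma~\ref{lem:c_pqhj} twice to reduce every $c_{(p,h),q,j}$ and $c_{p,(q,h),j}$ to a product of ordinary $c_{a,b}$'s, split the double sum over the two auxiliary indices $h,h'$ into a sum over the total number of points moving below the origin, and recognize the remaining sums as convolutions of classical shuffle constants so that the classical $d^2=0$ identity finishes the job. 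The main obstacle I expect is bookkeeping: tracking the signs $(-1)^{h(p-j)}$ and $(-1)^{(j+1)(p-h)}$ from Lemma~\ref{lem:c_pqhj} together with the Koszul signs from the differential, and verifying that they line up correctly in the same-side, double-axis subcase.
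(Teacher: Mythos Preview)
Your plan is essentially the same as the paper's: organize $d^2(\lambda,i,j)$ by the pair of merges performed, separate off the purely classical cases, and for the cases involving the axis reduce via Lemma~\ref{lem:c_pqhj} to identities among the ordinary constants $c_{p,q}$. Two remarks. First, your case split by ``how many merges touch the axis'' is slightly tangled: your ``case 2 subtle subcase'' (merge $i{+}1,i{+}2$ then axis with the composite) and your ``case 3 same-side'' (merge axis with $i{+}1$ then the new axis with $i{+}2$) are not separate phenomena to be handled internally, they are precisely the two orderings of the same triple merge that must cancel against each other---and indeed the identity you display in case 2 is exactly that cancellation. The paper's organization (separate pairs vs.\ triple merge) avoids this overlap. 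Second, after applying Lemma~\ref{lem:c_pqhj} the residual identity is not literally the classical three-column associativity; in the paper it boils down to showing
\[
c_{p,q} \;=\; \sum_{s=0}^{h} (-1)^{s(p-h+s)}\, c_{h-s,s}\, c_{p-h+s,q-s},
\]
which is proved by a short induction on $h$ using the recursion $c_{p,q} = (-1)^p c_{p,q-1} + c_{p-1,q}$. Your ``Vandermonde-type convolution'' gesture points at the right shape but you will still need to supply this argument.
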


\begin{proof}
The general strategy is to enumerate all types of boundary cells in $d^2 (\lambda,i,j)$ and show that their coefficients all vanish. Loosely speaking, cells in $d^2 (\lambda,i,j)$ are formed by subsequently performing two column-combining operations on $e_{(\lambda,i,j)}$. In general, there are two types of results: either (1) two pairs of columns in $e_{(\lambda,i,j)}$ are combined separately, or (2) three adjacent columns are combined into a single column. If none of these columns is the imaginary axis, the coefficient of the boundary cell vanishes in the exact same way as in the classical Fox-Neuwirth complex (one may treat the imaginary axis as a normal column in this case). There are four different subtypes of (1) and three subtypes of (2) that involve the imaginary axis. We will exhibit the argument for a subtype of each case.

The representative boundary cell we choose for type (1) is
\[((\lambda_1, ..., \lambda_{i-1}+\lambda_i, ..., \lambda_m + \lambda_{m+1}, ..., \lambda_{l(\lambda)}),i-1,j+h),\]
obtained by joining two pairs of columns indexed by $\{i-1,i\}$ and $\{m,m+1\}$ $(m \ge i+1)$. There are two orders to perform the operations: either (1a) combining the first pair then the second pair, or (1b) combining the second pair first. Thus the coefficient for the cell above in $d^2(\lambda,i,j)$ is
\[\begin{array}{c}
(-1)^{i-2} c_{(\lambda_{i-1},h),\lambda_{i}, j} \cdot (-1)^{m-2}c_{\lambda_m,\lambda_{m+1}}\\[5pt]
+ (-1)^{m-1}c_{\lambda_m,\lambda_{m+1}} \cdot (-1)^{i-2}  c_{(\lambda_{i-1},h),\lambda_{i}, j} = 0.\end{array}\]
A similar argument shows the same result for the other subtypes of case (1).

For an example of type (2), consider a boundary cell of the form
\[((\lambda_1, ..., \lambda_{i-2}+\lambda_{i-1}+\lambda_i, ..., \lambda_{l(\lambda)}),i-2,j+h),\]
obtained by joining three adjacent columns indexed by $\{i-2,i-1,i\}$. Similarly, there are two orders to perform the operations: either (2a) combining the first two columns then combining the joint column with the third, or (2b) combining the last two columns first. Particularly in case (2b), the imaginary axis involves in both column combinations, so the $h$ points that move below the fixed point $O$ in the final configuration can be split into two steps: $s$ points in the first operation followed by $h-s$ points in the second ($0\le s\le h$). The coefficient of the cell above in $d^2(\lambda,i,j)$ hence contains a sum over all $s$:
\[\begin{array}{c}
(-1)^{i-3} c_{\lambda_{i-2},\lambda_{i-1}} \cdot (-1)^{i-3} c_{(\lambda_{i-2} +\lambda_{i-1},h), \lambda_{i}, j} \\
+\displaystyle \sum_{s=0}^h (-1)^{i-2} c_{(\lambda_{i-1},s),\lambda_{i}, j} \cdot (-1)^{i-3}  c_{(\lambda_{i-2},h-s),\lambda_{i-1} + \lambda_{i}, j+s}.\end{array}\]
We apply Lemma~\ref{lem:c_pqhj} to write this coefficient completely in terms of the constants $c_{p,q}$ and observe that in order for it to vanish, the identity
\[\begin{array}{c}
c_{\lambda_{i-2},\lambda_{i-1}} c_{h,j} c_{\lambda_{i-2}+\lambda_{i-1}-h,\lambda_i-j-1} = \\
\displaystyle \sum^h_{s=0} (-1)^{s(\lambda_{i-2}-h+s)}c_{s,j} c_{\lambda_{i-1}-s,\lambda_i-j-1} c_{h-s,j+s} c_{\lambda_{i-2}-h+s,\lambda_{i-1}+\lambda_i-j-s-1}
\end{array}\]
must hold true.
This can be proved using a series of arithmetic manipulations and application of the shuffles' properties.

First, to simplify the notation, set $p = \lambda_{i-2}$, $q = \lambda_{i-1}$ and $r = \lambda_i - j -1$, then the identity becomes
\[ c_{p,q} c_{h,j} c_{p+q-h,r} = \displaystyle \sum^h_{s=0} (-1)^{s(p-h+s)}c_{s,j} c_{q-s,r} c_{h-s,j+s} c_{p-h+s,q-s+r}.\]
Applying the identity $c_{p,q} c_{p+q,r} = c_{q,r} c_{p,q+r}$ (a consequence of the associativity of $(p,q,r)$-shuffles) to the equation above yields
\[\begin{split}
    c_{p,q} c_{h,j} c_{p+q-h,r} & = \displaystyle \sum^h_{s=0} (-1)^{s(p-h+s)}(c_{s,j} c_{h-s,j+s}) (c_{q-s,r} c_{p-h+s,q-s+r}) \\
    & = \displaystyle \sum^h_{s=0} (-1)^{s(p-h+s)}(c_{h-s,s} c_{h,j}) (c_{p-h+s,q-s} c_{p+q-h,r}) \\
    & = c_{h,j} c_{p+q-h,r} \displaystyle \sum^h_{s=0} (-1)^{s(p-h+s)} c_{h-s,s} c_{p-h+s,q-s};
\end{split}\]
therefore it suffices to show that $c_{p,q} = \sum^h_{s=0} (-1)^{s(p-h+s)} c_{h-s,s} c_{p-h+s,q-s}$. We will prove this identity by induction on $h$.

First, recall that there is a bijection of sets $\mathrm{Sh}(p,q) \cong \mathrm{Sh}(p,q-1) \sqcup \mathrm{Sh}(p-1,q)$ (see, e.g., \cite{bae94,lod95}), which results in the shuffle identity $c_{p,q} = (-1)^p c_{p,q-1} + c_{p-1,q} = (-1)^q c_{p-1,q} + c_{p,q-1}$. The base cases are straightforward: for $h = 0$, $c_{p,q} = (-1)^0 c_{0,0} c_{p,q}$, while for $h = 1$, we recover the identity above $c_{p,q} = c_{1,0}c_{p-1,q} + (-1)^p c_{0,1}c_{p,q-1} = (-1)^p c_{p,q-1} + c_{p-1,q}$. Suppose it holds for $h$, then we may apply the identity in the following way:
\[\begin{split}
    c_{p,q} & = \displaystyle \sum^h_{s=0} (-1)^{s(p-h+s)} c_{h-s,s} c_{p-h+s,q-s} \\
    & = \displaystyle \sum^h_{s=0} (-1)^{s(p-h+s)} c_{h-s,s} [(-1)^{p-h+s}c_{p-h+s,q-s-1} + c_{p-h+s-1,q-s}] \\
    & = \displaystyle \sum^h_{s=0} (-1)^{s(p-h+s)} c_{h-s,s} [(-1)^{p-h+s}c_{p-(h+1)+(s+1),q-(s+1)} + c_{p-(h+1)+s,q-s}].
\end{split}\]
Thus $c_{p,q}$ can be written as a linear combination of terms of the form $c_{p-(h+1)+s,q-s}$ where $0 \le s \le h+1$. In the sum above, each term $c_{p-(h+1)+s,q-s}$ is derived from two terms $c_{p-h+s,q-s}$ and $c_{p-h+(s-1),q-(s-1)}$, hence its coefficient can computed to be
\[\begin{array}{c}
(-1)^{s(p-h+s)} c_{h-s,s} + (-1)^{(s-1)(p-h+s-1)} (-1)^{p-h+s-1} c_{h-s+1,s-1} = \\[5pt]
(-1)^{s(p-(h+1)+s)}[(-1)^s c_{h-s,s}+c_{h-s+1,s-1}] = (-1)^{s(p-(h+1)+s)}c_{(h+1)-s,s}.\end{array}\]
This completes our induction argument.
\end{proof}

By construction, the complex $D(n)_*$ is isomorphic to the relative cellular chain complex of Conf$_n (\mathbb{C}^\times) \cup \{ \infty \}$, relative to the point at infinity. In particular,
\[H_*(\mathrm{Conf}_n(\mathbb{C^\times}) \cup \{\infty\}, \{\infty\}) \cong H_*(D(n)_*).\]


\subsection{Cellular chain complex with local coefficients}

Let $L$ be a representation of $B_{n}$, and $\mathcal{L}$ be the associated local system over Conf$_{n}(\mathbb{C}^\times)$. Since $\mathcal{L}$ trivializes on the open cells of the Fox-Neuwirth stratification for Conf$_n (\mathbb{C}^\times)$, as graded groups the cellular chain complex with local coefficients $C_*(\mathrm{Conf}_*(\mathbb{C}^\times) \cup \{\infty\}, \{\infty\}; \mathcal{L})$ is isomorphic to $D(n)_* \otimes L$. To show their isomorphism as chain complexes, we need to study the differential; in particular, we must incorporate the braid action on $L$.

Recall that in the induced representation developed in Section~\ref{ssec:ind_rep}, we have a full set of representatives $\{ \alpha_i\}^{n+1}_{i=1}$ in the braid group $A_{n+1}$ for the left cosets of $B_n$. Define the map $\eta_i : A_{n+1} \to B_{n} \subset A_{n+1}$ by sending $a$ to $\alpha_{\underline{a}(i)}^{-1} a \alpha_i$, i.e. the image of $a$ is exactly the element $b_i \in B_n$ chosen in the proof of Proposition~\ref{prop:act_ind_rep}. This map is not a group homomorphism; however, observe that $\eta_i (ab) = \eta_{\underline{b}(i)}(a) \eta_i (b)$, as the result of the definition of this induced $A_{n+1}$-action. We may then define the differential of $D(n)_* \otimes L$ by
\[\begin{split}
    \displaystyle d[(\lambda, i, j) \otimes \ell)] & = \sum_{m = 1}^{i-2} (-1)^{m-1} \Bigg[ (\rho^m, i-1, j) \otimes \sum_{\gamma_m} (-1)^{|\gamma_m|} \eta_\iota(\widetilde{\gamma_m})(\ell) \Bigg] \\
    & + \sum_{m = i+1}^{q-n} (-1)^{m-1} \Bigg[ (\rho^m, i, j) \otimes \sum_{\gamma_m} (-1)^{|\gamma_m|} \eta_\iota(\widetilde{\gamma_m})(\ell) \Bigg] \\
    & + (-1)^{i-2} \sum_{h=0}^{\lambda_{i-1}} \Bigg[ (\rho^{i-1}, i-1, j+h) \otimes \sum_{\gamma_{i-1,h}} (-1)^{|\gamma_{i-1,h}|} \eta_\iota(\widetilde{\gamma_{i-1,h}})(\ell) \Bigg] \\
    & + (-1)^{i-1}\sum_{h=0}^{\lambda_{i+1}} \Bigg[ (\rho^i, i, j+h) \otimes \sum_{\gamma_{i,h}} (-1)^{|\gamma_{i,h}|} \eta_\iota(\widetilde{\gamma_{i,h}})(\ell) \Bigg]
\end{split} \]
where $\iota = j+1+\sum^{i-1}_{m = 1} \lambda_m$ is the overall position of the fixed point $O$ in the configuration $(\lambda,i,j)$; $\gamma_{i-1,h}$ runs over all $((\lambda_{i-1},h),\lambda_i, j)$-shuffles; $\gamma_{i,h}$ runs over all $(\lambda_i, (\lambda_{i+1},h),j)$-shuffles; and $\gamma_m$ runs over all $(\lambda_m, \lambda_{m+1})$-shuffles for all $m \ne i-1, i$. The lift $\widetilde{\gamma_m}$ (defined similarly for $\widetilde{\gamma_{i-1,h}}$ and $\widetilde{\gamma_{i,h}}$) in this differential is the lift of the shuffle $\gamma_m$ (as described in Section~\ref{ssec:bvs_qsa}) to the copy $A_{\lambda_m + \lambda_{m+1}} \le A_{n+1}$ consisting of braids that are only nontrivial on the $\lambda_m + \lambda_{m+1}$ strands starting with the $\lambda_1 + ... + \lambda_{m-1} +1^\mathrm{st}$.

\begin{thm}\label{thm:hom_FNF_Cx}
There is an isomorphism
\[H_* (\mathrm{Conf}_{n}(\mathbb{C}^\times) \cup \{ \infty \}, \{\infty\} ; \mathcal{L}) \cong H_*(D(n)_* \otimes L).\]
\end{thm}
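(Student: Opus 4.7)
The plan is to adapt the argument for the analogous theorem for $\mathrm{Conf}_n(\mathbb{C})$ (Theorem~\ref{thm:etw_fnf}) to the embedded subspace $\mathrm{Conf}_n(\mathbb{C}^\times) \hookrightarrow \mathrm{Conf}_{n+1}(\mathbb{C})$ with the fixed point $O$ at the origin. The identification as graded abelian groups is immediate: each open cell $e_{(\lambda,i,j)}$ is a product of $\mathrm{Conf}_k(\mathbb{R})$ factors and hence contractible, so the local system $\mathcal{L}$ trivializes on each cell and a choice of basepoint yields an isomorphism of the relative cellular chain group in degree $q$ with $(D(n)_* \otimes L)_q$. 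The real content is to verify that the cellular-with-local-coefficients differential coincides, term by term, with the formula defining the differential of $D(n)_* \otimes L$.

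The differential on the left-hand side decomposes as a sum over codimension-one boundary strata, each contribution being a signed sum over homotopy classes of attaching paths from the open cell to the boundary stratum, weighted by the monodromy of $\mathcal{L}$ along each path. First I would argue that attaching paths for each of the four boundary types correspond bijectively to shuffles: for boundaries that merge two columns neither of which is the imaginary axis, the situation reduces to the classical Fox-Neuwirth/Fuks picture and the paths are indexed by $(\lambda_m,\lambda_{m+1})$-shuffles; for boundaries that merge an adjacent column with the imaginary axis, the path must additionally record how many of the merging points sweep across $O$ to its left (below) side, which is exactly the additional parameter $h$ appearing in the $((\lambda_{i-1},h),\lambda_i,j)$- and $(\lambda_i,(\lambda_{i+1},h),j)$-shuffles. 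The orientation signs $(-1)^{m-1}$ and $(-1)^{|\gamma|}$ arise from the induced orientations on the boundary strata in the same manner as in Section~\ref{ssec:fnf}, following the conventions of \cite{gs12}.

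Next I would compute the monodromy. Each attaching path lies in $\mathrm{Conf}_n(\mathbb{C}^\times)$ and hence represents an element of $B_n$, but via the embedding into $\mathrm{Conf}_{n+1}(\mathbb{C})$ it lifts naturally to a braid in $A_{n+1}$ in which the $\iota$-th strand is precisely that of the fixed point $O$. For a shuffle $\gamma$ that merges two columns avoiding the imaginary axis, the lift $\widetilde{\gamma}$ is supported on strands disjoint from the $\iota$-th one, so $\underline{\widetilde{\gamma}}(\iota)=\iota$ and $\eta_\iota(\widetilde{\gamma}) = \alpha_\iota^{-1}\widetilde{\gamma}\alpha_\iota$ is an element of $B_n$ that realizes the parallel transport of $L$ along the path. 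For the boundary types involving the imaginary axis, the shuffle $\widetilde{\gamma_{i-1,h}}$ (respectively $\widetilde{\gamma_{i,h}}$) sends strand $\iota$ to the new overall position $\iota'$ of $O$ in the merged configuration, so $\underline{\widetilde{\gamma}}(\iota) = \iota'$ and applying the map $\eta_\iota$ with its correct target coset representative yields $\alpha_{\iota'}^{-1}\widetilde{\gamma}\alpha_\iota \in B_n$, matching the action appearing in the formula for the differential. Summing over all attaching paths reproduces each of the four terms in the definition of $d$.

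The main obstacle will be the geometric verification in the two mixed cases: one must check that, as a point on an adjacent column slides horizontally onto the imaginary axis and past $O$ to the below-side, the resulting path in $\mathrm{Conf}_n(\mathbb{C}^\times)$ lifts in $\mathrm{Conf}_{n+1}(\mathbb{C})$ to precisely the Fox-Neuwirth braid of the corresponding $((p,h),q,j)$- or $(p,(q,h),j)$-shuffle, with the correct crossing between the sliding strand and the $\iota$-th strand. The decomposition lemmas (Propositions~\ref{prop:pqhj_decomp} and~\ref{prop:phqj_decomp}) organize this verification by isolating a canonical ``move the marked element to its new position'' component $\omega$ whose lift is the product of standard generators realizing exactly those crossings, with the remaining shuffles $\beta$ and $\delta$ acting on disjoint blocks as in the classical case. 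Once this identification is in place, the chain-complex isomorphism follows from the naturality of cellular homology with local coefficients, and the resulting differential is precisely the one defined combinatorially on $D(n)_*\otimes L$.
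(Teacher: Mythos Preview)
Your outline is correct and reaches the theorem, but it is organized differently from the paper's proof. The paper works through the universal cover: it presents the cellular chain complex $\widetilde{D(n)_*}$ of the universal cover of $\mathrm{Conf}_n(\mathbb{C}^\times)$ as a free right $\mathbb{Z}[B_n]$-module on symbols $((\lambda,i,j),b)$ with $b\in B_n$, fixes a labelling convention on the codimension-one faces of the top cells $(((1,\dots,1),\iota,0),b)$ (assigning the label $\eta_\iota(\sigma_m)b$ to the face where the total order is altered at position $m$), and propagates this to all cells using the cocycle identity $\eta_i(ab)=\eta_{\underline{b}(i)}(a)\eta_i(b)$. The twisted complex then drops out as $\widetilde{D(n)_*}\otimes_{\mathbb{Z}B_n}L\cong D(n)_*\otimes L$. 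Your approach instead trivializes $\mathcal{L}$ on open cells and computes monodromy along attaching paths directly, lifting through the embedding into $\mathrm{Conf}_{n+1}(\mathbb{C})$. These are equivalent formalisms; the universal-cover bookkeeping buys you an automatic and globally consistent choice of basepoints and reference paths, which your version leaves implicit (an attaching path is not a loop, so to extract an element of $B_n$ you must compose with reference paths you have not specified).

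One minor point: your appeal to Propositions~\ref{prop:pqhj_decomp} and~\ref{prop:phqj_decomp} for the mixed-column cases is not needed here. The paper does not invoke the shuffle decompositions in this proof; the face maps in the imaginary-axis cases follow directly from the labelling convention together with the bookkeeping of the extra index $h$. Those decompositions are used elsewhere in the paper (Lemma~\ref{lem:c_pqhj} and the computations of Section~\ref{ssec:1dim}), not for this theorem.
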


\begin{proof}
Our argument will follow the outline of the proof of Theorem 4.3 in \cite{etw17}. Let $\widetilde{D(n)_*}$ be the cellular chain complex of the universal cover on $\mathrm{Conf}_n(\mathbb{C}^\times)$ obtained by lifting the Fox-Neuwirth cells. It suffices to describe an identification
\[ \widetilde{D(n)_q} \cong \mathbb{Z} \{ (((\lambda_1, ..., \lambda_{q-n+1}),i,j),b)|b\in B_n \} \]
as right $B_n$-representations which gives the desired description of the differentials.

The top dimensional cells of $\widetilde{D(n)_*}$ occur when $q = 2n$ and have the general form $(((1,...,1),i,0),b)$. Consider the codimension-1 faces of this cell obtained by combining the $m^\mathrm{th}$ and $m+1^\mathrm{st}$ columns, i.e. putting the $m^\mathrm{th}$ and $m+1^\mathrm{st}$ points on the same vertical line. There are two main outcomes of this operation: either the $m^\mathrm{th}$ point lies below the $m+1^\mathrm{st}$ point, or vice versa. Each of these are divided into subcases, depending on whether the fixed point $O$ is involved. Recall that for any configuration in $\mathrm{Conf}_n(\mathbb{C}^\times)$, the total order of points in the configuration is obtained by indexing them from bottom to top for each subsequent column starting with the leftmost one. We then label the braid element of a face based on its effect on the total order of points in the configuration: if the total order is preserved, we apply $\eta_i(\mathrm{id}) = \mathrm{id}$ to $b$ on the left; if it changes, we apply $\eta_i(\tilde{\gamma})$ where $\gamma$ is the corresponding permutation. The complete labelling system for these codimension-1 faces is as follows:
\begin{center}
\addtolength{\leftskip}{-1.2cm}
\addtolength{\rightskip}{-1.2cm}
{\renewcommand{\arraystretch}{1.5}%
\begin{tabular}{| c | c | c |}
\hline
 Case & $m^\mathrm{th}$ point below $m+1^\mathrm{st}$ point & $m^\mathrm{th}$ point above $m+1^\mathrm{st}$ point \\
 \hline
 $m < i-1$ & $(((1,...,1,2^{(m)},1,...,1),i-1,0),b)$ & $(((1,...,1,2^{(m)},1,...,1),i-1,0),\eta_i(\sigma_m) b)$ \\
 \hline
 $m = i-1$ & $(((1,...,1,2^{(i-1)},1,...,1),i-1,1),b)$ & $(((1,...,1,2^{(i-1)},1,...,1),i-1,0),\eta_i(\sigma_{i-1})b)$\\
 \hline
 $m = i$ & $(((1,...,1,2^{(i)},1,...,1),i,0),b)$ & $(((1,...,1,2^{(i)},1,...,1),i,1),\eta_i(\sigma_i)b)$\\
 \hline
 $m > i$ & $(((1,...,1,2^{(m)},1,...,1),i,0),b)$ & $(((1,...,1,2^{(m)},1,...,1),i,0),\eta_i(\sigma_m)b)$\\
 \hline
\end{tabular}}
\end{center}
where $(1,...,1,2^{(m)},1,...,1)$ denotes the composition of $n+1$ where the only non-1 part is $\lambda_m = 2$. Note that this choice of labelling is consistent with the right action of $B_n$.

More generally, the cell $((\lambda,i,j),b)$ corresponds to the face of $(((1,...,1),\iota,0),b)$ obtained by putting points into columns according to the configuration $\lambda$ while preserving the total order of points. Here the number $\iota = j+1+\sum_{m=1}^{i-1} \lambda_m$ is the overall position of $O$ in the configuration. However, if we arrange the face so that the total order is altered by a permutation $\gamma$, we need to multiply the element of $B_n$ in the cell's label on the left with $\eta_\iota(\tilde{\gamma})$. Note that this labelling system is compatible with the decomposition of braid elements into generators precisely because $\eta_i (ab) = \eta_{\underline{b}(i)}(a) \eta_i (b)$ for any $a,b \in A_{n+1}$.

It follows from this labelling system that the face maps of the complex $\widetilde{D(n)_*}$ are given by
\[d_m ((\lambda,i,j),b) = \begin{cases}
\displaystyle \sum_{\gamma_m}(-1)^{|\gamma_m|}((\rho^m,i-1,j),\eta_\iota(\widetilde{\gamma_m}) b) \hspace{3cm} \hfill m < i-1\\
\displaystyle \sum_{h=0}^{\lambda_{i-1}}\sum_{\gamma_{i-1,h}}(-1)^{|\gamma_{i-1,h}|}((\rho^{i-1},i-1,j+h),\eta_\iota(\widetilde{\gamma_{i-1,h}}) b) \hfill m = i-1\\
\displaystyle \sum_{h=0}^{\lambda_{i+1}}\sum_{\gamma_{i,h}}(-1)^{|\gamma_{i,h}|}((\rho^i,i,j+h),\eta_\iota(\widetilde{\gamma_{i,h}}) b) \hfill m = i\\
\displaystyle \sum_{\gamma_m}(-1)^{|\gamma_m|}((\rho^m,i,j),\eta_\iota(\widetilde{\gamma_m}) b) \hfill m > i.
\end{cases} \]
Notice that the sums over $h$ in the face maps $d_{i-1}$ and $d_i$ result from the fact that the number $j$ of points below the fixed point is a meaningful index of these cells. The signs in the formula come from the orientations of the cells. The differential $d : \widetilde{D(n)_q} \to \widetilde{D(n)_{q-1}}$ is given by the alternating sum of the face maps: $d = \sum_{m=1}^{q-n} (-1)^{m-1} d_m$.

Given a $B_n$-representation $L$, we want to give a description of the chain complex $\widetilde{D(n)_*} \otimes_{\mathbb{Z}B_n} L$ and its differential. Observe that we may identify $((\lambda,i,j),b) \otimes \ell$ with $((\lambda,i,j),1) \otimes b(\ell)$, hence there is a natural isomorphism of $k$-modules $\widetilde{D(n)_*} \otimes_{\mathbb{Z}B_n} L \cong D(n)_* \otimes L$. Furthermore, this identification when applied to the face maps and the differential results in the desired formula for the differential of $D(n)_* \otimes L$; for instance, when $m <i-1$,
\[\begin{split}
d_m(((\lambda,i,j),b) \otimes \ell) & = \displaystyle \sum_{\gamma_m}(-1)^{|\gamma_m|}((\rho_m,i-1,j),\eta_\iota(\widetilde{\gamma_m}) b) \otimes \ell\\
& = \displaystyle \sum_{\gamma_m}(-1)^{|\gamma_m|}((\rho_m,i-1,j),1) \otimes \eta_\iota(\widetilde{\gamma_m})b(\ell).
\end{split}\]
This concludes our proof of the theorem.
\end{proof}

This result gives another tool to compute the homology of the Artin groups of type \textit{B} with twisted coefficients in a similar manner as Corollary~\ref{cor:etw_rewrite}; however, the geometric construction of the chain complex $D(n)_* \otimes L$ offers a more intuitive view of the computation than the induced representation $\mathrm{Ind}^{A_{n+1}}_{B_n}(L)$ does.

\begin{cor}
There is an isomorphism
\[H_*(B_n;L) \cong H_{2n-*}(D(n)_* \otimes L^*)^*. \]
\end{cor}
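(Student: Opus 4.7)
The plan is to mimic the derivation of Corollary~\ref{cor:etw_rewrite} from Theorem~\ref{thm:etw_fnf}, substituting Theorem~\ref{thm:hom_FNF_Cx} and using the fact that $\mathrm{Conf}_n(\mathbb{C}^\times)$ is a $K(B_n,1)$. The key input is that $\mathrm{Conf}_n(\mathbb{C}^\times)$ is an aspherical $2n$-dimensional open manifold with fundamental group $B_n$, so group homology and (co)homology with local coefficients coincide with the corresponding invariants of the space.

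First I would apply Theorem~\ref{thm:hom_FNF_Cx} to the dual local system $\mathcal{L}^*$ associated with the dual $B_n$-representation $L^*$, obtaining
\[ H_*\bigl(\mathrm{Conf}_n(\mathbb{C}^\times) \cup \{\infty\},\{\infty\}; \mathcal{L}^*\bigr) \cong H_*\bigl(D(n)_* \otimes L^*\bigr). \]
Next, I would dualize over $k$ and apply the universal coefficient theorem together with the standard identification of relative cohomology of the one-point compactification with compactly supported cohomology:
\[ H_*\bigl(D(n)_* \otimes L^*\bigr)^* \cong H^*\bigl(\mathrm{Conf}_n(\mathbb{C}^\times) \cup \{\infty\},\{\infty\}; \mathcal{L}\bigr) \cong H^*_c\bigl(\mathrm{Conf}_n(\mathbb{C}^\times); \mathcal{L}\bigr). \]
Poincaré duality for the orientable $2n$-manifold $\mathrm{Conf}_n(\mathbb{C}^\times)$ then gives
\[ H^*_c\bigl(\mathrm{Conf}_n(\mathbb{C}^\times); \mathcal{L}\bigr) \cong H_{2n-*}\bigl(\mathrm{Conf}_n(\mathbb{C}^\times); \mathcal{L}\bigr). \]
Finally, since $\mathrm{Conf}_n(\mathbb{C}^\times)$ is a $K(B_n,1)$ (it is the classifying space for the Artin group of type $B$), the right-hand side is the group homology $H_{2n-*}(B_n;L)$, and stringing these isomorphisms together yields the claimed identification.

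The only subtlety is that all of these isomorphisms should be recorded with respect to the correct grading and compatible with the sign conventions built into the Fox-Neuwirth differential on $D(n)_*\otimes L^*$ defined before Theorem~\ref{thm:hom_FNF_Cx}; since Theorem~\ref{thm:hom_FNF_Cx} already packages this compatibility, no further work is needed. Thus the proof reduces to a short chain of canonical isomorphisms, and the main ``obstacle'' — namely constructing a cellular model whose differential tracks the $B_n$-action faithfully — has already been resolved in the preceding theorem.
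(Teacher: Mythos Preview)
Your proposal is correct and follows exactly the same route as the paper's own proof: apply Theorem~\ref{thm:hom_FNF_Cx} with coefficients in $\mathcal{L}^*$, dualize, invoke the universal coefficient theorem and Poincar\'e duality, and use that $\mathrm{Conf}_n(\mathbb{C}^\times)$ is a $K(B_n,1)$. If anything, your write-up is slightly more explicit about the asphericity needed for the last step, which the paper leaves implicit.
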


\begin{proof}
Dualize over $k$ both sides of Theorem~\ref{thm:hom_FNF_Cx} with coefficients in $\mathcal{L}^*$, apply the universal coefficient theorem and Poincar\'e duality to the dual of the left side, and invoke the fact that $\pi_1 (\mathrm{Conf}_n (\mathbb{C}^\times)) = B_n$ to complete our proof.
\end{proof}

Interestingly, this result combined with Corollary~\ref{cor:etw_rewrite} shows that there is a quasi-isomorphism between the complexes $D(n)_* \otimes L^*$ and $C(n+1)_* \otimes \mathrm{Ind}^{A_{n+1}}_{B_n} (L^*)$. It turns out that these chain complexes are in fact isomorphic, which evidences a close relationship between the cellular stratification we constructed for $\mathrm{Conf}_n (\mathbb{C}^\times)$ and the induced representation.

\begin{prop}\label{prop:Dn=Cn}
For any $B_n$-representation $L$, there is an isomorphism of chain complexes
\[D(n)_* \otimes L \cong C(n+1)_{*+2} \otimes \mathrm{Ind}^{A_{n+1}}_{B_n} (L).\]
\end{prop}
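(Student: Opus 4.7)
The plan is to construct an explicit chain isomorphism $\Phi$ by matching generators and then verifying compatibility with the differentials. Define $\Phi: D(n)_q \otimes L \to C(n+1)_{q+2} \otimes \mathrm{Ind}^{A_{n+1}}_{B_n}(L)$ on generators by sending $((\lambda, i, j), \ell)$ to $(\lambda, \alpha_\iota \ell)$, where $\iota := j + 1 + \sum_{k=1}^{i-1} \lambda_k$ is the overall position of the punctured origin $O$ in the cell $(\lambda, i, j)$ introduced in Section~\ref{ssec:FNF_Cx}. For any composition $\lambda$ of $n+1$, the assignment $(i, j) \mapsto \iota$ is a bijection from $\{(i,j) : 1 \le i \le l(\lambda),\ 0 \le j \le \lambda_i - 1\}$ onto $\{1, \dots, n+1\}$; combined with the decomposition $\mathrm{Ind}^{A_{n+1}}_{B_n}(L) \cong \bigoplus_\iota \alpha_\iota L$ from Section~\ref{ssec:ind_rep} and the observation that both $D(n)_q$ and $C(n+1)_{q+2}$ are indexed by compositions of $n+1$ with exactly $q - n + 1$ parts, this yields an isomorphism of graded $k$-modules.

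To check that $\Phi$ is a chain map, compute $d(\lambda \otimes \alpha_\iota \ell)$ using the braid-twisted differential stated in Section~\ref{ssec:fnf} and compare with the four-part differential of $D(n)_\ast$ term-by-term. For each $m \in \{1, \dots, l(\lambda)-1\}$, split the internal sum over $\gamma \in \mathrm{Sh}(\lambda_m, \lambda_{m+1})$ according to whether $m \in \{i-1, i\}$. When $m \notin \{i-1, i\}$, the lifted braid $\tilde{\gamma}$ acts only on strands disjoint from position $\iota$, so $\underline{\tilde{\gamma}}(\iota) = \iota$ and Proposition~\ref{prop:act_ind_rep} gives $\tilde{\gamma}(\alpha_\iota \ell) = \alpha_\iota \cdot \eta_\iota(\tilde{\gamma})(\ell)$. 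These terms match exactly types~1 ($m < i-1$, yielding $(\rho^m, i-1, j)$) and~2 ($m > i$, yielding $(\rho^m, i, j)$) in the differential of $D(n)_\ast$, with identical sign $(-1)^{m-1}$.

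The main obstacle is the case $m \in \{i-1, i\}$, in which the shuffle moves the pure strand. The key combinatorial observation is the partition $\mathrm{Sh}(\lambda_{i-1}, \lambda_i) = \bigsqcup_{h=0}^{\lambda_{i-1}} \mathrm{Sh}((\lambda_{i-1}, h), \lambda_i, j)$, where $h$ records the unique value with $\gamma(\lambda_{i-1} + j + 1) = h + j + 1$, and analogously $\mathrm{Sh}(\lambda_i, \lambda_{i+1}) = \bigsqcup_{h=0}^{\lambda_{i+1}} \mathrm{Sh}(\lambda_i, (\lambda_{i+1}, h), j)$; these are precisely the families of shuffles introduced in Definition~\ref{defn:pqhj_shuf}. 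For $\gamma$ in the $h$-th piece, the pure strand is carried from position $\iota$ to $\iota' = \iota - \lambda_{i-1} + h$ when $m = i-1$ and to $\iota' = \iota + h$ when $m = i$, giving $\tilde{\gamma}(\alpha_\iota \ell) = \alpha_{\iota'} \cdot \eta_\iota(\tilde{\gamma})(\ell)$. A brief arithmetic check confirms that $\iota'$ is precisely the overall position of $O$ in the boundary cell $(\rho^{i-1}, i-1, j+h)$ (respectively $(\rho^i, i, j+h)$), reproducing types~3 and~4 of the differential on $D(n)_\ast$; the signs $(-1)^{i-2}$ and $(-1)^{i-1}$ appearing there coincide with $(-1)^{m-1}$ for $m = i-1$ and $m = i$, respectively. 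Assembling these four matches shows that $\Phi$ intertwines the differentials and is thus an isomorphism of chain complexes.
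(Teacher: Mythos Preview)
Your proof is correct and follows the same approach as the paper: you define exactly the map the paper does, sending $(\lambda,i,j)\otimes\ell$ to $\lambda\otimes\alpha_\iota\ell$ with $\iota=j+1+\sum_{k<i}\lambda_k$, and argue that it is a bijection on generators. The paper's own proof is in fact terser than yours---it simply writes down the map and its inverse and declares them chain maps without spelling out the verification---whereas you carry out the case analysis over $m$ and the partition of $\mathrm{Sh}(\lambda_m,\lambda_{m+1})$ into the families of Definition~\ref{defn:pqhj_shuf} to confirm compatibility with the differentials; this extra detail is correct and a welcome addition.
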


\begin{proof}
Define a chain map $D(n)_* \otimes L \to C(n+1)_{*+2} \otimes \mathrm{Ind}^{A_{n+1}}_{B_n} (L)$ by sending $(\lambda,i,j) \otimes \ell$ to $\lambda \otimes \alpha_\iota (\ell)$ where $\iota = j+1+ \sum_{m=1}^{i-1}\lambda_m$. Conversely, given an element $\lambda \otimes t \in C(n+1)_{*+2} \otimes \mathrm{Ind}^{A_{n+1}}_{B_n} (L)$, observe that $t$ must be an element of $\alpha_\iota L$ for some $ 1\le \iota \le n+1$. Let $i$ be the largest integer such that $j' := \iota - \sum_{m=1}^{i-1} \lambda_m > 0$, then there is a chain map $C(n+1)_{*+2} \otimes \mathrm{Ind}^{A_{n+1}}_{B_n} (L) \to D(n)_* \otimes L$ that sends $\lambda \otimes t$ to $(\lambda,i,j'-1) \otimes \alpha^{-1}_\iota(t)$. These maps are evidently inverses as maps of graded $k$-modules, hence the chain complexes are isomorphic.
\end{proof}

There are two interesting consequences of this proposition. First, at this point, we have introduced two different methods to compute the homology of the Artin group $B_n$ with twisted coefficients in a $B_n$-representation $L$, by computing the homology of either complexes in Proposition~\ref{prop:Dn=Cn}. It may depend on the nature of the representation $L$ that working with the induced representation is more convenient than working with the more complicated complex $D(n)_*$ (e.g., when $L = V^{\otimes n} \otimes W$), and vice versa. Hence this result provides a flexible approach to the computation of the homology of the Artin groups of type \textit{B}. Secondly, Proposition~\ref{prop:Dn=Cn} demonstrates an example of the connection between the induced representation of braid groups and the topology of configuration spaces. In more generality, given a representation $L$ of a subgroup of the braid group, we may transform the information about the induced representation of $L$ into information about a cellular structure of a configuration space, whose fundamental group is the aforementioned subgroup.

\subsection{Geometric interpretation of the complex $F_*(\mathfrak{M},\mathfrak{I})$}

The purpose of this final subsection is to relate the homological algebra objects developed in Section~\ref{ssec:alg_req} with the Fox-Neuwirth cellular chain complex elaborated in the previous section. Recall that we constructed two algebraic objects to express the homology of type-\textit{B} Artin groups: the free resolution $F_*(M,A)$ of an $A$-bimodule $M$ and the $\mathfrak{A}$-bimodule $\mathfrak{M}$ over a quantum shuffle algebra $\mathfrak{A}$. These objects capture two key aspects of the cellular stratification of $\mathrm{Conf}_*(\mathbb{C}^\times)$: the configuration of the vertical columns and the configuration of points on the imaginary axis.

The $\mathfrak{A}$-bimodule $\mathfrak{M}$ provides an algebraic analogue of the imaginary axis in a configuration in Conf$_n(\mathbb{C}^\times)$. The single copy of $W$ in every summand $V^{\otimes j} \otimes W \otimes V^{\otimes q-j-1}$ of $\mathfrak{M}$ decorates the fixed point at the origin, while all other points in the configuration are decorated by $V$. Meanwhile, the chain complex $F_*(M,A)$ mirrors the structure of the vertical columns in the Fox-Neuwirth cellular stratification of Conf$_*(\mathbb{C}^\times)$. Given the choices of the algebra $A = \mathfrak{A}(V^*_\epsilon)$ and the bimodule $M = \mathfrak{M}(V^*_\epsilon, W^*)$, the single copy of $M$ in each summand of the graded module $F_*(M,A)$ represents the imaginary axis in the configuration, while column-combining operations are encoded in the multiplication of both the algebra and the bimodule. These observations offer a more intuitive perspective on the construction of our algebraic objects, which was not obvious from the first approach involving the induced representation.


\subsection*{Acknowledgments}

The author would like to acknowledge Craig Westerland for initiating the subject of study of this paper and helpful discussions. We also appreciate Calista Bernard for useful suggestions, and Alexander Voronov for providing insights about the Hochschild chain complex and Hochschild homology. Finally, we thank anonymous referees for helpful comments.

\bibliographystyle{alpha}
\bibliography{BraidRef}

\begin{thebibliography}{DCPSS99}

\bibitem[AL18]{al18}
Toshiyuki Akita and Ye~Liu.
\newblock Second mod 2 homology of {A}rtin groups.
\newblock {\em Algebr. Geom. Topol.}, 18(1):547--568, 2018.

\bibitem[Arn69]{arn69}
Vladimir~I Arnol'd.
\newblock The cohomology ring of the colored braid group.
\newblock In {\em Vladimir I. Arnol'd-Collected Works}, pages 183--186.
  Springer, 1969.

\bibitem[Arn70]{arn70}
V.~I. Arnol'd.
\newblock On some topological invariants of algebraic functions.
\newblock {\em Trans. Moscow Math. Soc.}, 21:30--52, 1970.

\bibitem[Bae94]{bae94}
John~C. Baez.
\newblock Hochschild homology in a braided tensor category.
\newblock {\em Trans. Amer. Math. Soc.}, 344(2):885--906, 1994.

\bibitem[Boy20]{boy20}
Rachael Boyd.
\newblock The low-dimensional homology of finite-rank {C}oxeter groups.
\newblock {\em Algebr. Geom. Topol.}, 20(5):2609--2655, 2020.

\bibitem[Cal06]{cal06}
Filippo Callegaro.
\newblock The homology of the {M}ilnor fiber for classical braid groups.
\newblock {\em Algebr. Geom. Topol.}, 6:1903--1923, 2006.

\bibitem[Che17]{che17}
Weiyan Chen.
\newblock Homology of braid groups, the {B}urau representation, and points on
  superelliptic curves over finite fields.
\newblock {\em Israel J. Math.}, 220(2):739--762, 2017.

\bibitem[CM14]{cm14}
Filippo Callegaro and Ivan Marin.
\newblock Homology computations for complex braid groups.
\newblock {\em J. Eur. Math. Soc. (JEMS)}, 16(1):103--164, 2014.

\bibitem[CMS08a]{cms08a}
Filippo Callegaro, Davide Moroni, and Mario Salvetti.
\newblock Cohomology of affine {A}rtin groups and applications.
\newblock {\em Trans. Amer. Math. Soc.}, 360(8):4169--4188, 2008.

\bibitem[CMS08b]{cms08b}
Filippo Callegaro, Davide Moroni, and Mario Salvetti.
\newblock Cohomology of {A}rtin groups of type {$A_n, B_n$} and applications.
\newblock In {\em Groups, homotopy and configuration spaces}, volume~13 of {\em
  Geom. Topol. Monogr.}, pages 85--104. Geom. Topol. Publ., Coventry, 2008.

\bibitem[Coh73a]{coh1}
Fred Cohen.
\newblock Cohomology of braid spaces.
\newblock {\em Bull. Amer. Math. Soc.}, 79:763--766, 1973.

\bibitem[Coh73b]{coh2}
Fred Cohen.
\newblock Homology of {$\Omega ^{(n+1)}\Sigma ^{(n+1)}X$} and
  {$C_{(n+1)}X,\,n>0$}.
\newblock {\em Bull. Amer. Math. Soc.}, 79:1236--1241 (1974), 1973.

\bibitem[Cri99]{cri99}
John Crisp.
\newblock Injective maps between {A}rtin groups.
\newblock In {\em Geometric group theory down under ({C}anberra, 1996)}, pages
  119--137. de Gruyter, Berlin, 1999.

\bibitem[DCPS01]{dcps01}
C.~De~Concini, C.~Procesi, and M.~Salvetti.
\newblock Arithmetic properties of the cohomology of braid groups.
\newblock {\em Topology}, 40(4):739--751, 2001.

\bibitem[DCPSS99]{dcpss}
Corrado De~Concini, Claudio Procesi, Mario Salvetti, and Fabio Stumbo.
\newblock Arithmetic properties of the cohomology of {A}rtin groups.
\newblock {\em Ann. Scuola Norm. Sup. Pisa Cl. Sci. (4)}, 28(4):695--717, 1999.

\bibitem[DKKT97]{dkkt97}
G\'{e}rard Duchamp, Alexander Klyachko, Daniel Krob, and Jean-Yves Thibon.
\newblock Noncommutative symmetric functions. {III}. {D}eformations of {C}auchy
  and convolution algebras.
\newblock volume~1, pages 159--216. 1997.
\newblock Lie computations (Marseille, 1994).

\bibitem[EL23]{ellenberg-landesman}
Jordan~S. Ellenberg and Aaron Landesman.
\newblock Homological stability for generalized {H}urwitz spaces and {S}elmer
  groups in quadratic twist families over function fields.
\newblock {\em arXiv preprint arXiv:2310.16286}, 2023.

\bibitem[ETW23]{etw17}
Jordan~S Ellenberg, TriThang Tran, and Craig Westerland.
\newblock {F}ox-{N}euwirth-{F}uks cells, quantum shuffle algebras, and
  {M}alle's conjecture for function fields.
\newblock {\em arXiv preprint arXiv:1701.04541}, 2023.

\bibitem[FN62]{fn62}
R.~Fox and L.~Neuwirth.
\newblock The braid groups.
\newblock {\em Math. Scand.}, 10:119--126, 1962.

\bibitem[Fre88]{frenkel}
E.~V. Frenkel.
\newblock Cohomology of the commutator subgroup of the braid group.
\newblock {\em Funktsional. Anal. i Prilozhen.}, 22(3):91--92, 1988.

\bibitem[Fuk70]{fuk70}
D.~B. Fuks.
\newblock Cohomology of the braid group {${\rm mod}\ 2$}.
\newblock {\em Funkcional. Anal. i Prilo\v{z}en.}, 4(2):62--73, 1970.

\bibitem[Gin05]{gin05}
Victor Ginzburg.
\newblock Lectures on noncommutative geometry.
\newblock {\em arXiv preprint math/0506603}, 2005.

\bibitem[Gor78]{gor78}
V.~V. Gorjunov.
\newblock The cohomology of braid groups of series {$C$} and {$D$} and certain
  stratifications.
\newblock {\em Funktsional. Anal. i Prilozhen.}, 12(2):76--77, 1978.

\bibitem[GR20]{gr20}
Darij Grinberg and Victor Reiner.
\newblock Hopf algebras in combinatorics.
\newblock {\em arXiv preprint arXiv:1409.8356v7}, 2020.

\bibitem[GS12]{gs12}
Chad Giusti and Dev Sinha.
\newblock Fox-{N}euwirth cell structures and the cohomology of symmetric
  groups.
\newblock In {\em Configuration spaces}, volume~14 of {\em CRM Series}, pages
  273--298. Ed. Norm., Pisa, 2012.

\bibitem[Hoa23]{h23_res}
A.~T.~N. Hoang.
\newblock {F}ox--{N}euwirth cells, quantum shuffle algebras, and character sums
  of the resultant.
\newblock {\em arXiv preprint arXiv:2308.01410}, 2023.

\bibitem[KS20]{ks20}
Mikhail Kapranov and Vadim Schechtman.
\newblock Shuffle algebras and perverse sheaves.
\newblock {\em Pure Appl. Math. Q.}, 16(3):573--657, 2020.

\bibitem[KSV14]{ksv14}
M.~Kapranov, O.~Schiffmann, and E.~Vasserot.
\newblock Spherical {H}all algebra of {$\overline{{\rm Spec}(\Bbb Z)}$}.
\newblock In {\em Homological mirror symmetry and tropical geometry}, volume~15
  of {\em Lect. Notes Unione Mat. Ital.}, pages 153--196. Springer, Cham, 2014.

\bibitem[Leb13]{leb13}
Victoria Lebed.
\newblock Homologies of algebraic structures via braidings and quantum
  shuffles.
\newblock {\em J. Algebra}, 391:152--192, 2013.

\bibitem[Lod95]{lod95}
Jean-Louis Loday.
\newblock Cup-product for {L}eibniz cohomology and dual {L}eibniz algebras.
\newblock {\em Math. Scand.}, 77(2):189--196, 1995.

\bibitem[Mal02]{mal02}
Gunter Malle.
\newblock On the distribution of {G}alois groups.
\newblock {\em J. Number Theory}, 92(2):315--329, 2002.

\bibitem[Mal04]{mal04}
Gunter Malle.
\newblock On the distribution of {G}alois groups. {II}.
\newblock {\em Experiment. Math.}, 13(2):129--135, 2004.

\bibitem[Mar96]{mar96}
N.~S. Markaryan.
\newblock Homology of braid groups with nontrivial coefficients.
\newblock {\em Mat. Zametki}, 59(6):846--854, 960, 1996.

\bibitem[Mar17]{mar17}
Ivan Marin.
\newblock Homology computations for complex braid groups {II}.
\newblock {\em J. Algebra}, 477:540--547, 2017.

\bibitem[Mil58]{mil58}
John Milnor.
\newblock The {S}teenrod algebra and its dual.
\newblock {\em Ann. of Math. (2)}, 67:150--171, 1958.

\bibitem[Ros98]{ros98}
Marc Rosso.
\newblock Quantum groups and quantum shuffles.
\newblock {\em Invent. Math.}, 133(2):399--416, 1998.

\bibitem[Sal87]{sal87}
M.~Salvetti.
\newblock Topology of the complement of real hyperplanes in {${\bf C}^N$}.
\newblock {\em Invent. Math.}, 88(3):603--618, 1987.

\bibitem[Sal94]{sal94}
Mario Salvetti.
\newblock The homotopy type of {A}rtin groups.
\newblock {\em Math. Res. Lett.}, 1(5):565--577, 1994.

\bibitem[Sal02]{sal02}
Mario Salvetti.
\newblock Cohomology of {C}oxeter groups.
\newblock volume 118, pages 199--208. 2002.
\newblock Arrangements in Boston: a Conference on Hyperplane Arrangements
  (1999).

\bibitem[Sta12]{sta12}
Richard~P. Stanley.
\newblock {\em Enumerative combinatorics. {V}olume 1}, volume~49 of {\em
  Cambridge Studies in Advanced Mathematics}.
\newblock Cambridge University Press, Cambridge, second edition, 2012.

\bibitem[Vai78]{vai78}
F.~V. Vainshtein.
\newblock The cohomology of braid groups.
\newblock {\em Funktsional. Anal. i Prilozhen.}, 12(2):72--73, 1978.

\bibitem[Vas92]{vas92}
V.~A. Vassiliev.
\newblock {\em Complements of discriminants of smooth maps: topology and
  applications}, volume~98 of {\em Translations of Mathematical Monographs}.
\newblock American Mathematical Society, Providence, RI, 1992.
\newblock Translated from the Russian by B. Goldfarb.

\bibitem[Ver98]{ver98}
Vladimir~V. Vershinin.
\newblock Homology of braid groups and their generalizations.
\newblock In {\em Knot theory ({W}arsaw, 1995)}, volume~42 of {\em Banach
  Center Publ.}, pages 421--446. Polish Acad. Sci. Inst. Math., Warsaw, 1998.

\bibitem[Ver04]{ver06}
V.~V. Vershinin.
\newblock Generalizations of braids and homology.
\newblock {\em Sovrem. Mat. Prilozh.}, (19, Topol. i Nekommut. Geom.):90--104,
  2004.

\bibitem[Wei94]{wei94}
Charles~A. Weibel.
\newblock {\em An introduction to homological algebra}, volume~38 of {\em
  Cambridge Studies in Advanced Mathematics}.
\newblock Cambridge University Press, Cambridge, 1994.

\end{thebibliography}

\end{document}